\theoremstyle{plain}
\newtheorem{The}{Theorem}[section]
\newtheorem{Pro}[The]{Proposition}
\newtheorem{Hyp}[The]{Hypothesis}
\newtheorem{Cor}[The]{Corollary}
\newtheorem{Lem}[The]{Lemma}
\newtheorem{Con}[The]{Conjecture}
\def\th@remark{%
  \thm@headfont{\bfseries}%
  \normalfont 
  \thm@preskip\topsep \divide\thm@preskip\tw@
  \thm@postskip\thm@preskip
}
\theoremstyle{remark} 
\newtheorem{Rem}[The]{Remark}
\newtheorem{Step}{Step}
\theoremstyle{definition}
\newtheorem{Def}[The]{Definition}
\begin{document} 
\title[Numerical evidence for higher order Stark-type conjectures]
{Numerical evidence for higher order Stark-type conjectures}

\author{Kevin McGown, Jonathan Sands, Daniel Valli\`{e}res}



\date{\today}

\begin{abstract}
We give a systematic method of providing numerical evidence for higher order Stark-type conjectures such as (in chronological order) Stark's conjecture over $\mathbb{Q}$, Rubin's conjecture, Popescu's conjecture, and a conjecture due to Burns that constitutes a generalization of Brumer's classical conjecture on annihilation of class groups.  Our approach is general and could be used for any abelian extension of number fields, independent of the signature and type of places (finite or infinite) that split completely in the extension.

We then employ our techniques in the situation where $K$ is a totally real, abelian, ramified cubic extension of a real quadratic field.  We numerically verify the conjectures listed above for all fields $K$ of this type with absolute discriminant less than $10^{12}$, for a total of $19197$ examples.  The places that split completely in these extensions are always taken to be the two real archimedean places of $k$ and we are in a situation where all the $S$-truncated $L$-functions have order of vanishing at least two.  

\end{abstract} 
\maketitle 
\tableofcontents 

\section{Introduction}
In a well-known series of four papers, Harold Stark formulated several conjectures regarding the special value at $s=0$ of  Artin $L$-functions.  In \cite{Stark:1975}, he formulated what is now known as Stark's main conjecture (or Stark's conjecture over $\mathbb{Q}$) for a general  Artin $L$-function, and in \cite{Stark:1980}, he formulated a more refined conjecture for  $L$-functions associated to abelian extensions of number fields having order of vanishing one at $s=0$ (referred to henceforth as Stark's abelian rank one conjecture).  After some previous work of Sands, Stark and Tangedal, Stark's abelian rank one conjecture was extended to higher order of vanishing  $L$-functions by Rubin (Conjecture $B$ of \cite{Rubin:1996}) and by Popescu (Conjecture $C$ of \cite{Popescu:2002}).  Popescu's and Rubin's conjectures are closely related, though not equivalent in general.  Popescu carefully studied a comparison theorem between the two, and he showed that Rubin's conjecture implies his, and at times, they are equivalent.  For more information on these matters, we refer the reader to Theorem $5.5.1$ of \cite{Popescu:2002}.

All these conjectures have been studied extensively by various authors.  An impressive amount of work in gathering numerical evidence for Stark's abelian rank one conjecture has been done over the years.  But to our knowledge, very few authors have provided numerical evidence for Rubin's or Popescu's conjecture in the case where the  $L$-functions have order of vanishing greater than or equal to two.  (The only two such works known to us are \cite{Grant:1999} and \cite{Sands:2001}.)  The goal of this investigation is to remedy this situation.
After completing this paper, it was brought to our attention that Stucky (see~\cite{Stucky:2017}) very recently completed his master's thesis on the subject, but his approach is different than ours.

Roughly speaking, Popescu's conjecture predicts that a certain arithmetical object built out of $S$-units, called an evaluator, lies in a meaningful lattice inside a vector space over $\mathbb{Q}$.  The idea is to use an Artin system of $S$-units in order to give a precise formula for the evaluator that then allows one to check if it lies in the expected lattice.  There is no canonical choice for an Artin system of $S$-units, and different systems give different representations for the evaluator.  Nevertheless, they can be found algorithmically.  It is worth pointing out that Stark originally used Artin systems of $S$-units in order to state his main conjecture in \cite{Stark:1975}, but they have since been superseded by the use of a more abstract result on rationality of linear representations due to Herbrand.  In \S \ref{artinsystem} below, we give a definition of an Artin system of $S$-units, since it is essential to our approach.

It follows from our formula (see Proposition \ref{formulaartin} below)
that the evaluator
will lie in the underlying rational vector space provided Stark's conjecture over $\mathbb{Q}$ is true.  Stark's conjecture over $\mathbb{Q}$ can be interpreted as a rationality statement about an element in $\mathbb{C}[G]$ constructed out of special values of $S$-truncated $L$-functions at $s=0$.  Recently, Burns formulated a conjecture (Conjecture $2.4.1$ in \cite{Burns:2011}) that would provide bounds for the denominators of this element (and also provides a generalization of Brumer's classical conjecture on annihilation of class groups).  Hence, we also give numerical evidence for Stark's conjecture over $\mathbb{Q}$ and Burns's conjecture.  

Also, we note that our work provides numerical evidence for the leading term conjecture (namely, the equivariant Tamagawa number conjecture for the pair $(h^{0}({\rm Spec}(K)),\mathbb{Z}[G])$), since Burns showed in \cite{Burns:2007} that it implies Popescu's conjecture (see also \cite{Burns/Kurihara/Sano:2016}).  Moreover, Burns showed that the leading term conjecture implies his conjecture under some technical conditions (see Theorem $4.1.1$ of \cite{Burns:2011}).  

In this paper, we use our approach to provide numerical evidence for Stark's, Rubin's, Popescu's, and Burns's conjectures by computing the $19197$ examples where the top field is a totally real number field of absolute discriminant less than $10^{12}$ that is a ramified abelian cubic extension of a real quadratic number field and where the split places in the extension are always taken to be the two archimedean ones of the base field (the set $S$ is taken to be the minimal one).  As far as we know, the conjectures above are still open in this setting, except for when the top field is abelian over $\mathbb{Q}$ by previous results of Burns.  (See Theorem $A$ of \cite{Burns:2007} and Corollary $4.1.3$ of \cite{Burns:2011}.)  Our method is fairly general and could be used as well to numerically verify various refinements and generalizations of both Rubin's and Popescu's conjectures, such as Conjecture $4.16$ of \cite{Vallieres:2016} and various other ones contained in \cite{Burns:2011} and \cite{Burns/Boomla:2017}.

Note that for the cubic extensions $K/k$ considered above, the group of roots of unity $\mu(K) = \{\pm 1 \}$ is ${\rm Gal}(K/k)$-cohomologically trivial.  (See Lemma $5.4.4$ of \cite{Popescu:2002} for instance.)  Hence, by Theorem $5.5.1$ of \cite{Popescu:2002}, Rubin's conjecture is equivalent to Popescu's conjecture.  Computationally, it is more convenient to work with Popescu's conjecture, since one does not have to deal with an auxiliary set of primes $T$ needed in the statement of Rubin's conjecture.  This allows us to focus solely on Popescu's conjecture.  Moreover, in this case, Theorem $4.1.1$ of \cite{Burns:2011} implies that Burns's conjecture follows from the leading term conjecture.

The paper is organized as follows.  We start in \S \ref{preliminaries} with a review of $S$-truncated $L$-functions and the Dirichlet logarithmic map.  In~\S \ref{theory} we gather the necessary theoretical results.
We give a clear definition of an Artin system of $S_{K}$-units in \S \ref{artinsystem} and this allows us to give a description of Stark's regulator in terms of an Artin system of $S_{K}$-units in \S \ref{starkreg}.  We present Stark's main conjecture over $\mathbb{Q}$ in \S \ref{mainst}, Popescu's conjecture in \S \ref{popconj}, and Burns's conjecture in \S \ref{burnsconj}.  We study in detail a very simple example in \S \ref{example} in the order of vanishing one case.  Most of the material contained in \S \ref{theory} is not new, but we rephrase everything in terms of our central notion of an Artin system of $S_{K}$-units.  In the end, \S \ref{artinsystem}, Proposition \ref{concretestarkreg}, Theorem \ref{reformulationQ}, and Proposition \ref{formulaartin} are our main tools that when combined together allow us to provide numerical evidence for Stark's, Rubin's, Popescu's, and Burns's conjectures.  
In~\S \ref{numerical} we explain our numerical calculations.
We outline our method in \S \ref{algorithm} and present the results of our computations with a few examples in \S \ref{data}.
Finally, \S \ref{tables} contains tables that summarize our data.

\subsection*{Acknowledgement}
The authors would like to thank Edward Roualdes and Nicholas Nelson of California State University, Chico
for allowing us to use their computer for our calculations.  

\section{Preliminaries} \label{preliminaries}

\subsection{Basic notation} \label{basic}
Let $k$ be a number field.  We denote its ring of integers by $O(k)$.  A place of $k$ will be denoted by $v$ or $w$.  If $v$ is a finite place then it corresponds to a prime ideal $\mathfrak{p}$ of $O(k)$, and we shall use the words ``place'' or ``prime'' interchangeably.  The corresponding residue field will be denoted by $\kappa(v)$ or $\kappa(\mathfrak{p})$.  Its cardinality is denoted by $\mathbb{N}(v)$ or $\mathbb{N}(\mathfrak{p})$.  To each place $v$, there is an associated normalized absolute value $| \cdot |_{v}$ defined as follows.  Here $\alpha$ denotes an arbitrary element of $k$, and $|\cdot|$ denotes the usual absolute value on $\mathbb{C}$.
\begin{enumerate}
\item If $v$ is a real place with corresponding real embedding $\tau$, then $|\alpha|_{v} = |\tau(\alpha)|$.
\item If $v$ is a complex place with corresponding pair of complex embeddings $\{\tau,\bar{\tau} \}$, then $|\alpha|_{v} = |\tau(\alpha)|^{2}$.
\item If $v$ is a finite place with corresponding prime ideal $\mathfrak{p}$, then $|\alpha|_{v} = \mathbb{N}(\mathfrak{p})^{-{\rm ord}_{\mathfrak{p}}(\alpha)}$, where ${\rm ord}_{\mathfrak{p}}$ is the usual valuation associated to $\mathfrak{p}$.
\end{enumerate}
With these normalizations, we have the product formula:  for all $\alpha \in k^{\times}$,
\begin{equation} \label{prodformula}
\prod_{v}|\alpha|_{v} = 1, 
\end{equation}
where the product is over all places of $k$.  

Throughout this paper, we let $S_{\infty}$ be the set of infinite places of $k$.  The number of real infinite places is denoted by $r_{1}$ and the number of complex infinite places by $r_{2}$.  Hence $|S_{\infty}| = r_{1} + r_{2}$.  Moreover, $S$ will always denote a finite set of places of $k$ that contains $S_{\infty}$.  We have the $S$-integers defined by
$$O_{S}(k) = \{\alpha \in k^{\times} \, | \, {\rm ord}_{v}(\alpha) \ge 0, \text{ for all } v \notin S \}, $$
and we set $E_{S}(k) = O_{S}(k)^{\times}$.  The group $E_{S}(k)$ is known as the group of $S$-units of $k$.  The structure of $E_{S}(k)$ as an abelian group is well-known:  it follows from the $S$-unit theorem that 
$$E_{S}(k) \simeq \mu(k) \times \mathbb{Z}^{|S|-1}, $$
where $\mu(k)$ consists of the roots of unity in $k$.  We set $w_{k} = |\mu(k)|$.

\subsection{The $S$-truncated $L$-functions}  \label{lfunctions}
For simplicity, we shall restrict ourselves to \emph{abelian} extensions of number fields $K/k$.  The Galois group of $K/k$ is denoted by $G$.  As earlier, we fix a finite set of places $S$ of $k$ that is assumed to contain $S_{\infty}$, and we denote the set of places of $K$ lying above places in $S$ by $S_{K}$.  The results of this section are well-known, and we refer the reader to \cite{Lang:1994} for more details.

Given a place $v$ of $k$, one has a short exact sequence
$$1 \longrightarrow I_{v} \longrightarrow G_{v} \longrightarrow {\rm Gal}(\kappa(w)/\kappa(v)) \longrightarrow 1,$$
where $I_{v}$ and $G_{v}$ are the inertia and decomposition group respectively, associated to the place $v$.  We let $\sigma_{v}$ be an element of $G_{v}$ that is mapped to the Frobenius automorphism in ${\rm Gal}(\kappa(w)/\kappa(v))$ via the isomorphism
$G_{v}/I_{v} \stackrel{\simeq}{\longrightarrow} {\rm Gal}(\kappa(w)/\kappa(v)). $
If $v$ is unramified in $K/k$, then $\sigma_{v}$ is unique, since $I_{v} = 1$.  In this case, $\sigma_{v}$ is called the Frobenius automorphism at $v$.

Given a place $v \in S$, we define an element $Fr_{v}$ of $\mathbb{Q}[G]$ as follows:
$$Fr_{v} = \frac{1}{|I_{v}|} \sigma_{v} N_{I_{v}},
\text{ where }
N_{I_{v}} = \sum_{h \in I_{v}}h. $$
Throughout this paper, we denote the trivial character by $\chi_{1}$.  If $\chi \in \widehat{G}$ is such that $\chi \neq \chi_{1}$, then 
\begin{equation}
\chi(Fr_{v}) = 1 \text{ if and only if } G_{v} \subseteq {\rm ker}(\chi).
\end{equation}

Given $\chi \in \widehat{G}$, the corresponding $S$-truncated $L$-function is defined by
$$L_{K,S}(s,\chi) = \prod_{v \notin S}\left(1 - \frac{\chi(Fr_{v})}{\mathbb{N}(v)^{s}} \right)^{-1}. $$
This infinite product converges absolutely and defines a holomorphic function for ${\rm Re}(s) > 1$.

The $L$-functions $L_{K}(s,\chi) := L_{K,S_{\infty}}(s,\chi)$ satisfy a functional equation which we now recall.  Let $\chi \in \widehat{G}$ and let $v$ be a real infinite place.  Then there are two possibilities:  either $G_{v} \subseteq {\rm ker}(\chi)$ or $G_{v} \not\subseteq {\rm ker}(\chi)$.  We let
\begin{enumerate}
\item $r_{1}^{+}(\chi)$ be the number of real infinite places $v$ such that $G_{v} \subseteq {\rm ker}(\chi)$,
\item $r_{1}^{-}(\chi)$ be the number of real infinite places $v$ such that $G_{v} \not\subseteq {\rm ker}(\chi)$.
\end{enumerate}
Define
$$\xi_{k}(s,\chi) = \left(\frac{\sqrt{|\Delta_{k}| \cdot \mathbb{N}(\mathfrak{f(\chi)})}}{2^{r_{2}} \cdot \pi^{d/2}} \right)^{s} \Gamma\left(\frac{1+s}{2} \right)^{r_{1}^{-}(\chi)}\Gamma\left(\frac{s}{2} \right)^{r_{1}^{+}(\chi)} \Gamma(s)^{r_{2}} \cdot L_{K}(s,\chi), $$
where $\Delta_{k}$ is the discriminant of $k$, $\mathfrak{f}(\chi)$ the conductor of the character $\chi$ and $d = [k:\mathbb{Q}]$.  Then
\begin{equation} \label{functeqL}
\xi_{k}(s,\chi) = W(\chi) \cdot \xi_{k}(1-s,\overline{\chi}),
\end{equation}
where $W(\chi)$ is a complex number with absolute value $1$ satisfying $W(\chi_{1}) = 1$.
\begin{The} \label{orderofvanishing}
Let $\chi \in \widehat{G}$ and let $S$ be any finite set of places of $k$ containing $S_{\infty}$.  Then
\begin{equation*}
{\rm ord}_{s=0}L_{K,S}(s,\chi) 
=\begin{cases}
|S| - 1, &\text{ if } \chi = \chi_{1}, \\
|\{v \in S \, | \, G_{v} \subseteq {\rm ker}(\chi)\}|, &\text{ otherwise.} 
\end{cases}
\end{equation*}
\end{The}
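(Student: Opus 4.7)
The strategy is to factor out the finite Euler factors in $S\setminus S_\infty$, reducing the claim to the order of vanishing of $L_K(s,\chi) = L_{K,S_\infty}(s,\chi)$ at $s=0$, and then to use the functional equation~(\ref{functeqL}) to transfer that analysis to $s=1$, where the behavior of abelian Artin $L$-functions is classical.

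First I would write
$$L_{K,S}(s,\chi) \; = \; L_K(s,\chi) \cdot \prod_{v \in S \setminus S_\infty} \left(1 - \frac{\chi(Fr_v)}{\mathbb{N}(v)^s}\right)$$
and analyze each Euler factor at $s=0$. For $\chi = \chi_1$ the factor is $1 - \mathbb{N}(v)^{-s}$, which vanishes to order one. For $\chi \neq \chi_1$, note that $\chi(N_{I_v}) = |I_v|$ or $0$ according as $\chi|_{I_v}$ is trivial or not; a quick check then shows that $\chi(Fr_v) = 1$ precisely when $G_v \subseteq \ker(\chi)$, in which case the factor contributes a simple zero, and otherwise the factor is nonzero at $s=0$. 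This already isolates the finite-place contribution predicted by the statement.

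Next I would handle $L_K(s,\chi)$ via~(\ref{functeqL}). At $s=0$ the factor $\Gamma((1+s)/2)^{r_1^-(\chi)}$ is holomorphic and nonvanishing, while $\Gamma(s/2)^{r_1^+(\chi)}$ and $\Gamma(s)^{r_2}$ together contribute a pole of order $r_1^+(\chi) + r_2$; at $s=1$ all three gamma factors are finite and nonzero. Using that $\ker(\overline{\chi}) = \ker(\chi)$ (so $r_1^\pm(\overline{\chi}) = r_1^\pm(\chi)$), the functional equation ${\rm ord}_{s=0}\xi_k(s,\chi) = {\rm ord}_{s=1}\xi_k(s,\overline{\chi})$ yields
$$\mathrm{ord}_{s=0} L_K(s,\chi) \; = \; \mathrm{ord}_{s=1} L_K(s,\overline{\chi}) \; + \; r_1^+(\chi) + r_2.$$

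Finally I would invoke the classical behavior at $s=1$. If $\chi = \chi_1$, then $L_K(s,\chi_1) = \zeta_k(s)$ has a simple pole at $s=1$, giving $\mathrm{ord}_{s=0} L_K(s,\chi_1) = r_1 + r_2 - 1$; adding the $|S \setminus S_\infty|$ zeros from the first step produces $|S| - 1$. If $\chi \neq \chi_1$, then $L_K(s,\chi)$ extends to an entire function and is nonzero at $s=1$ (the classical nonvanishing for abelian Artin $L$-series), giving $\mathrm{ord}_{s=0} L_K(s,\chi) = r_1^+(\chi) + r_2$. Since every complex place has $G_v$ trivial and is therefore automatically in $\ker(\chi)$, this count equals $|\{v \in S_\infty : G_v \subseteq \ker(\chi)\}|$, and combining with the finite contribution gives the formula. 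The main obstacle is careful bookkeeping of the gamma-factor poles through the functional equation together with the input of nonvanishing of abelian Artin $L$-series at $s=1$; the rest is a direct computation.
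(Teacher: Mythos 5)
Your proof is correct and follows the same route the paper sketches: separate the finite Euler factors in $S\setminus S_\infty$, then handle $L_K(s,\chi)$ at $s=0$ via the functional equation~(\ref{functeqL}), the pole structure of the gamma factors, and the nonvanishing $L_K(1,\chi)\neq 0$ for $\chi\neq\chi_1$. The paper's proof is a two-sentence outline citing exactly these ingredients, so your write-up is simply a detailed version of the intended argument.
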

\begin{proof}
If $S = S_{\infty}$, then the theorem follows from the known properties of the gamma function, the functional equation (\ref{functeqL}), and the non-trivial fact that $L_{K}(1,\chi) \neq 0$ if $\chi \neq \chi_{1}$.  If $S \neq S_{\infty}$, then the theorem follows by considering what is happening with the Euler factors at the places in $S\smallsetminus S_\infty$.
\end{proof}

The $S_{K}$-truncated Dedekind zeta function of $K$ is defined by
$$\zeta_{K,S}(s) = \prod_{\mathfrak{P} \notin S_{K}}\left(1 - \frac{1}{\mathbb{N}\mathfrak{P}^{s}} \right)^{-1} $$
for ${\rm Re}(s) > 1$ and can be extended to a function that is holomorphic everywhere
except for a simple pole at $s=1$.
Its Taylor expansion at $s=0$ begins as
\begin{equation} \label{classnumberformula}
\zeta_{K,S}(s) = -\frac{h_{K,S}R_{K,S}}{w_{K}}s^{|S_{K}| - 1} + \ldots, 
\end{equation}
where $h_{K,S}$ is the $S_{K}$-class number of $K$ and $R_{K,S}$ the $S_{K}$-regulator.  Note that one can rewrite the order of vanishing of $\zeta_{K,S}$ at $s=0$ as
$${\rm ord}_{s=0}(\zeta_{K,S}) = |S_{K}| - 1 =  {\rm rank}_{\, \mathbb{Z}}E_{S}(K), $$
where we write $E_{S}(K)$ rather than $E_{S_{K}}(K)$ in order to simplify the notation.  The $S_{K}$-truncated Dedekind zeta function can be written in terms of the $S$-truncated $L$-functions as follows:
\begin{equation} \label{prodformulaL}
\zeta_{K,S}(s) = \prod_{\chi \in \widehat{G}}L_{K,S}(s,\chi).
\end{equation}

Let us write
$$L_{K,S}(s,\chi) = c_{S}(\chi)s^{r_{S}(\chi)} + \ldots $$
The order of vanishing $r_{S}(\chi)$ is known due to Theorem \ref{orderofvanishing}.  Combining (\ref{classnumberformula}) and (\ref{prodformulaL}), one has
\begin{equation} \label{prod1}
-\frac{h_{K,S}R_{K,S}}{w_{K}} =  \prod_{\chi \in \widehat{G}}c_{S}(\chi). 
\end{equation}
In the 1970s, Stark proposed a conjectural formula for $c_{S}(\chi)$.  After some preliminaries, we shall present his main conjecture in \S \ref{mainst} below.

If $\chi \in \widehat{G}$, we let
$$e_{\chi} = \frac{1}{|G|}\sum_{\sigma \in G}\chi(\sigma) \cdot \sigma^{-1}, $$
be the corresponding idempotent in the semisimple finite dimensional $\mathbb{C}$-algebra $\mathbb{C}[G]$.
One easily checks that
$$e_{\chi_{1}} = \frac{1}{|G|}N_{G},
\text{ where }
N_{G} = \sum_{\sigma \in G}\sigma. $$
We introduce the $S$-equivariant $L$-function
$$\theta_{K,S}(s) = \sum_{\chi \in \widehat{G}} L_{K,S}(s,\chi) \cdot e_{\overline{\chi}}, $$
which is a meromorphic function from $\mathbb{C}$ into $\mathbb{C}[G]$.  We will also make use of the standard notation $L_{K,S}^{*}(0,\chi)$ instead of $c_{S}(\chi)$ and we set
$$\theta_{K,S}^{*}(0) = \sum_{\chi \in \widehat{G}}L_{K,S}^{*}(0,\chi) \cdot e_{\overline{\chi}}. $$

\subsection{The logarithmic map} \label{logarithm}
We label the places
$$S =\{v_{1},v_{2},\ldots,v_{n} \}, $$
so that $|S| = n$, and in doing so, we introduce an ordering on $S$.  For each $i = 1,\ldots,n$, we fix a place $w_{i}$ of $K$ lying above $v_{i}$.  Following Tate in \cite{Tate:1984}, we let $Y_{S}(K)$ be the free abelian group on the places in $S_{K}$.  We have a short exact sequence of $\mathbb{Z}[G]$-modules
\begin{equation} \label{importantses}
0 \longrightarrow X_{S}(K) \longrightarrow Y_{S}(K) \stackrel{s_{K}}{\longrightarrow} \mathbb{Z} \longrightarrow 0, 
\end{equation}
where the map $s_{K}$ is the augmentation map and $X_{S}(K)$ its kernel.  Recall that $s_K$ is defined by setting $s_{K}(w) = 1$ for all $w \in S_{K}$ and extending by linearity.

If $A$ if a finite abelian group, $M$ a $\mathbb{Z}[A]$-module and $F$ a subfield of $\mathbb{C}$, then we write $FM$ rather than $F\otimes_{\mathbb{Z}}M$.  We define the logarithmic map  
$$\lambda_{K,S}: E_{S}(K) \longrightarrow \mathbb{C} Y_{S}(K)$$
by the formula
\begin{equation*}
\lambda_{K,S}(u) = -\sum_{w \in S_{K}} \log|u|_{w} \cdot  w,
\end{equation*}
whenever $u \in E_{S}(K)$.  Because of the product formula (\ref{prodformula}), $\lambda_{K,S}$ takes values in $\mathbb{C} X_{S}(K)$.  Its extension to $\mathbb{C} E_{S}(K)$ will be denoted by the same symbol.  Not only is this map a $\mathbb{C}$-linear map, but it is also $G$-equivariant; hence, it is a $\mathbb{C}[G]$-module morphism.  The $S_{K}$-unit theorem implies that $\lambda_{K,S}$ induces an isomorphism of $\mathbb{C}[G]$-modules
\begin{equation} \label{Gisom}
\lambda_{K,S}: \mathbb{C} E_{S}(K) \stackrel{\simeq}{\longrightarrow} \mathbb{C} X_{S}(K). 
\end{equation}

Recall that we have an injection of $\mathbb{Z}[G]$-modules $\iota_{k}:Y_{S}(k) \hookrightarrow Y_{S}(K)$ defined by
$$v \mapsto |G_{v}| \sum_{w \, | \, v }w. $$
(The group $G$ acts trivially on $Y_{S}(k)$.)
After tensoring with $\mathbb{C}$, we get an injective morphism of $\mathbb{C}[G]$-modules $\mathbb{C}Y_{S}(k) \hookrightarrow \mathbb{C}Y_{S}(K)$ that we denote by the same symbol $\iota_{k}$.  This map allows us to view $\mathbb{C}Y_{S}(k)$ inside of $\mathbb{C}Y_{S}(K)$, so that $\mathbb{C}Y_{S}(k) \subseteq \mathbb{C}Y_{S}(K)$.

\begin{Pro} \label{clear}
Let $\chi \in \widehat{G}$.
\begin{enumerate}
\item If $\chi \neq \chi_{1}$, then \label{unun}
$$\mathbb{C}Y_{S}(K) \cdot e_{\chi} = \mathbb{C}X_{S}(K) \cdot e_{\chi}. $$
\item For the trivial character, we have \label{deuxdeux}
$$\mathbb{C}Y_{S}(K) \cdot e_{\chi_{1}} = \mathbb{C}Y_{S}(k) \text{ and } \mathbb{C}X_{S}(K) \cdot e_{\chi_{1}} = \mathbb{C}X_{S}(k). $$
\end{enumerate}
\end{Pro}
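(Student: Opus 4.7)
The plan is to exploit the exactness of the functors $\mathbb{C}\otimes_{\mathbb{Z}}(\cdot)$ and multiplication by the idempotent $e_\chi$, applied to the defining short exact sequence (\ref{importantses}). After these two exact operations, we obtain
\begin{equation*}
0 \longrightarrow \mathbb{C}X_S(K)\cdot e_\chi \longrightarrow \mathbb{C}Y_S(K) \cdot e_\chi \stackrel{s_K}{\longrightarrow} \mathbb{C} \cdot e_\chi \longrightarrow 0
\end{equation*}
for each $\chi \in \widehat{G}$. Since $G$ acts trivially on $\mathbb{Z}$ (hence on $\mathbb{C}$), one has $\mathbb{C}\cdot e_\chi = 0$ whenever $\chi \neq \chi_1$. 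This immediately gives part (\ref{unun}).

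For the first identity of part (\ref{deuxdeux}), recall that $e_{\chi_1} = \frac{1}{|G|}N_G$, so that $\mathbb{C}Y_S(K)\cdot e_{\chi_1}$ is the $G$-invariant part of $\mathbb{C}Y_S(K)$. A direct computation gives, for any $w \in S_K$ lying above a place $v \in S$,
\begin{equation*}
e_{\chi_1}\cdot w = \frac{1}{|G|}\sum_{\sigma \in G}\sigma w = \frac{|G_v|}{|G|}\sum_{w'\mid v}w' = \frac{1}{|G|}\iota_k(v),
\end{equation*}
where we used that the stabilizer of $w$ in $G$ is $G_v$ (up to conjugation, which is irrelevant since $G$ is abelian) and that each place above $v$ is therefore hit $|G_v|$ times in the orbit sum. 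As $w$ and $v$ vary, this shows $\mathbb{C}Y_S(K)\cdot e_{\chi_1} = \iota_k(\mathbb{C}Y_S(k))$, which is identified with $\mathbb{C}Y_S(k)$ via the injection $\iota_k$.

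For the second identity of part (\ref{deuxdeux}), one uses the displayed exact sequence with $\chi = \chi_1$, which (since $e_{\chi_1}$ acts as the identity on the trivial module $\mathbb{C}$) becomes
\begin{equation*}
0 \longrightarrow \mathbb{C}X_S(K)\cdot e_{\chi_1} \longrightarrow \iota_k(\mathbb{C}Y_S(k)) \stackrel{s_K}{\longrightarrow} \mathbb{C} \longrightarrow 0.
\end{equation*}
A quick check shows $s_K(\iota_k(v)) = |G_v|\cdot [K:k]/|G_v| = [K:k]$, so $s_K\circ \iota_k = [K:k]\cdot s_k$. Hence the kernel of $s_K$ on $\iota_k(\mathbb{C}Y_S(k))$ is exactly $\iota_k(\mathbb{C}X_S(k))$, which under the identification given by $\iota_k$ is $\mathbb{C}X_S(k)$, as required.

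Neither step presents any real obstacle; the only point requiring care is the bookkeeping of the normalizing factors $|G_v|$ appearing in the definition of $\iota_k$, which must match the multiplicities produced when averaging over $G$ in order to realize the identification $\mathbb{C}Y_S(K)\cdot e_{\chi_1} \simeq \mathbb{C}Y_S(k)$ cleanly.
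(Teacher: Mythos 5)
Your proof is correct and follows essentially the same approach as the paper: tensor the defining short exact sequence with $\mathbb{C}$, apply $e_\chi$, use that $\mathbb{C}\cdot e_\chi = 0$ for $\chi\neq\chi_1$, and for the trivial character use the key identity $e_{\chi_1}\cdot w = \frac{1}{|G|}\iota_k(v)$. The only difference is that you spell out the verification that $s_K\circ\iota_k = [K:k]\cdot s_k$, which the paper leaves implicit in the phrase ``it then immediately follows.''
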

\begin{proof}
Tensoring the short exact sequence (\ref{importantses}) with $\mathbb{C}$ gives the short exact sequence of $\mathbb{C}[G]$-modules
$$0 \longrightarrow \mathbb{C}X_{S}(K) \longrightarrow \mathbb{C}Y_{S}(K) \longrightarrow \mathbb{C} \longrightarrow 0.$$
Since $G$ acts trivially on $\mathbb{C}$, part (\ref{unun}) follows at once.
To show that $\mathbb{C}Y_{S}(K) \cdot e_{\chi_{1}} = \mathbb{C}Y_{S}(k)$, the main point is to 
use the equality
\begin{equation*}
e_{\chi_{1}} \cdot w = \frac{1}{|G|}\iota_{k}(v)
\end{equation*}
that is valid for all places $v \in S$ and all places $w$ of $K$ lying above $v$.
It then immediately follows that $\mathbb{C}X_{S}(K) \cdot e_{\chi_{1}} = \mathbb{C}X_{S}(k)$.
\end{proof}

The following proposition, though simple, is quite useful.
\begin{Pro} \label{useful}
Let $\chi \in \widehat{G}$ be a non-trivial character.  Furthermore, let $v \in S$ and let $w$ be a place of $K$ lying above $v$.  In $\mathbb{C}Y_{S}(K)$, we have
\begin{enumerate}
\item If $G_{v} \subseteq {\rm ker}(\chi)$, then $e_{\chi} \cdot w \neq 0$, 
\item If $G_{v} \not \subseteq {\rm ker}(\chi)$, then $e_{\chi} \cdot w = 0$. 
\end{enumerate}
\end{Pro}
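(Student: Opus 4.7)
The plan is to compute $e_\chi \cdot w$ directly from the definition of $e_\chi$ and exploit the orbit structure of $G$ acting on the places of $K$ above $v$. By definition,
\[
e_\chi \cdot w = \frac{1}{|G|}\sum_{\sigma \in G}\chi(\sigma)\,\sigma^{-1}(w).
\]
I would partition $G$ into left cosets of the decomposition group $G_v$, writing $\sigma = \tau h$ with $\tau$ running over a set of coset representatives of $G/G_v$ and $h$ running over $G_v$.

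Next, I would use the standing assumption that $K/k$ is abelian to simplify $\sigma^{-1}(w)$. Since $G$ is abelian, every place of $K$ above $v$ has the same decomposition group $G_v$; in particular, $h \in G_v$ fixes $\tau^{-1}(w)$ as well as $w$, so $(\tau h)^{-1}(w) = h^{-1}\tau^{-1}(w) = \tau^{-1}(w)$. Substituting into the sum and factoring gives
\[
e_\chi \cdot w = \frac{1}{|G|}\sum_{\tau \in G/G_v}\chi(\tau)\,\tau^{-1}(w)\;\cdot\;\sum_{h \in G_v}\chi(h).
\]

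Now the two cases follow from the orthogonality of characters applied to $G_v$. If $G_v \not\subseteq \ker(\chi)$, then $\chi$ restricts to a non-trivial character of $G_v$, so $\sum_{h \in G_v}\chi(h) = 0$, which proves (2). If $G_v \subseteq \ker(\chi)$, then the inner sum is $|G_v|$, and one is left with
\[
e_\chi \cdot w \;=\; \frac{|G_v|}{|G|}\sum_{\tau \in G/G_v}\chi(\tau)\,\tau^{-1}(w).
\]
The distinct elements $\tau^{-1}(w)$ (as $\tau$ ranges over coset representatives) are precisely the distinct places of $K$ lying over $v$, hence form part of a $\mathbb{C}$-basis of $\mathbb{C}Y_S(K)$. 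Since $\chi(\tau)$ factors through $G/G_v$ (as $G_v \subseteq \ker(\chi)$) and takes only nonzero values (roots of unity), this expresses $e_\chi \cdot w$ as a nontrivial $\mathbb{C}$-linear combination of linearly independent elements; hence $e_\chi \cdot w \neq 0$, proving (1).

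The only subtlety is the reduction $(\tau h)^{-1}(w) = \tau^{-1}(w)$, which depends critically on $G$ being abelian so that $G_v$ stabilizes every place above $v$; everything else is a direct character-orthogonality computation.
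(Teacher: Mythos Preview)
Your proof is correct and follows essentially the same approach as the paper: decompose $G$ into cosets of $G_v$, factor out the character sum over $G_v$, and apply orthogonality. The paper simplifies $h^{-1}\sigma_i^{-1}\cdot w$ by commuting to $\sigma_i^{-1}h^{-1}\cdot w$ and using $h^{-1}\cdot w = w$ directly, whereas you argue that $G_v$ stabilizes $\tau^{-1}(w)$; both are valid in the abelian setting and yield the same identity. Your justification for non-vanishing in case (1)---that the $\tau^{-1}(w)$ are distinct basis elements with nonzero coefficients---is slightly more explicit than the paper's, which simply asserts $\neq 0$.
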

\begin{proof}
Let $\sigma_{1},\ldots,\sigma_{s}$ be a complete set of representatives of $G/G_{v}$.  Then
\begin{equation*}
\begin{aligned}
e_{\chi} \cdot w &= \frac{1}{|G|}\sum_{\sigma \in G}\chi(\sigma)\sigma^{-1} \cdot w \\
&= \frac{1}{|G|}\sum_{i=1}^{s}\sum_{h \in G_{v}}\chi(\sigma_{i}h)\sigma_{i}^{-1}h^{-1} \cdot w\\
&= \frac{1}{|G|} \sum_{i=1}^{s} \chi(\sigma_{i})\sigma_{i}^{-1} \cdot w \cdot \left(\sum_{h \in G_{v}}\chi(h) \right).
\end{aligned}
\end{equation*}
If $G_{v} \subseteq {\rm ker}(\chi)$, then this last line is 
$$\frac{|G_{v}|}{|G|}\sum_{i=1}^{s}\chi(\sigma_{i}) \sigma_{i}^{-1}\cdot w \neq 0.$$
On the other hand, if $G_{v} \not \subseteq {\rm ker}(\chi)$, then we get zero, since
$$\sum_{h \in G_{v}} \chi(h) = 0.$$
\end{proof}

Using the previous proposition,
one can give a different formula for the order of vanishing of the $S$-truncated $L$-functions.
\begin{The} \label{orderofvanishing1}
Let $\chi \in \widehat{G}$.  Then
\begin{equation*}
\begin{aligned}
{\rm ord}_{s=0}L_{K,S}(s,\chi) = {\rm dim}_{\mathbb{C}}(\mathbb{C}X_{S}(K) \cdot e_{\chi}) = {\rm dim}_{\mathbb{C}}(\mathbb{C}E_{S}(K) \cdot e_{\chi}).
\end{aligned} 
\end{equation*}
\end{The}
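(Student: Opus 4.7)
The plan is to reduce everything to Theorem \ref{orderofvanishing}, which already supplies the order of vanishing, so that it suffices to compute the two dimensions. The equality $\dim_{\mathbb{C}}(\mathbb{C}X_{S}(K)\cdot e_\chi) = \dim_{\mathbb{C}}(\mathbb{C}E_{S}(K)\cdot e_\chi)$ is the easy half: the Dirichlet logarithmic map gives a $\mathbb{C}[G]$-module isomorphism $\lambda_{K,S}\colon \mathbb{C}E_{S}(K) \to \mathbb{C}X_{S}(K)$ in (\ref{Gisom}), and since $e_\chi$ is a central idempotent in $\mathbb{C}[G]$, multiplication by $e_\chi$ commutes with any $\mathbb{C}[G]$-morphism, so $\lambda_{K,S}$ restricts to an isomorphism between the $\chi$-isotypic components.

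For the remaining equality, I would compute $\dim_{\mathbb{C}}(\mathbb{C}X_{S}(K)\cdot e_\chi)$ by splitting into the two cases singled out in Theorem \ref{orderofvanishing}. When $\chi = \chi_1$, Proposition \ref{clear}(\ref{deuxdeux}) identifies $\mathbb{C}X_{S}(K)\cdot e_{\chi_1}$ with $\mathbb{C}X_{S}(k)$, and tensoring the augmentation sequence (\ref{importantses}) for the base field $k$ with $\mathbb{C}$ immediately gives $\dim_{\mathbb{C}}\mathbb{C}X_{S}(k) = |S|-1$, which matches ${\rm ord}_{s=0}L_{K,S}(s,\chi_1) = |S|-1$.

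When $\chi \neq \chi_1$, Proposition \ref{clear}(\ref{unun}) lets me replace $\mathbb{C}X_{S}(K)\cdot e_\chi$ by $\mathbb{C}Y_{S}(K)\cdot e_\chi$. I would then use the $\mathbb{C}[G]$-module direct sum
$$\mathbb{C}Y_{S}(K) \;=\; \bigoplus_{v \in S}\mathbb{C}Y_v,$$
where $\mathbb{C}Y_v$ denotes the $\mathbb{C}$-span of the places of $K$ lying above $v$, to obtain $\mathbb{C}Y_{S}(K)\cdot e_\chi = \bigoplus_{v \in S}\mathbb{C}Y_v \cdot e_\chi$. Proposition \ref{useful} says each summand vanishes when $G_v \not\subseteq {\rm ker}(\chi)$, while for $G_v \subseteq {\rm ker}(\chi)$ the summand $\mathbb{C}Y_v \cdot e_\chi$ is one-dimensional, spanned by the nonzero vector $e_\chi \cdot w$ for any place $w$ above $v$. (This last one-dimensionality is the only spot requiring a moment of care; it is a consequence of $G$ being abelian, so that $e_\chi\mathbb{C}[G]$ is itself a one-dimensional simple component.) Counting the nonzero summands yields $|\{v \in S : G_v \subseteq {\rm ker}(\chi)\}|$, which matches the second case of Theorem \ref{orderofvanishing}, completing the proof.
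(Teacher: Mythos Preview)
Your proof is correct and follows exactly the approach the paper indicates: the second equality via the isomorphism (\ref{Gisom}), and the first equality by combining Theorem \ref{orderofvanishing} with Propositions \ref{clear} and \ref{useful}. You have simply spelled out the details of the dimension count that the paper leaves implicit.
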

\begin{proof}
The first equality follows from Theorem \ref{orderofvanishing}, Proposition \ref{useful} and Proposition \ref{clear}.  The second one follows from the isomorphism (\ref{Gisom}).
\end{proof}

For each $i=1,\ldots,n$, we define $\ell_{i,K}: E_{S}(K) \longrightarrow \mathbb{C}[G]$ by the formula
$$\ell_{i,K}(u) = - \frac{1}{|G_{i}|}\sum_{\sigma \in G}\log|u^{\sigma}|_{w_{i}}\cdot \sigma^{-1}, $$
where from now on we write $G_{i}$ rather than $G_{v_{i}}$.  Its extension to $\mathbb{C}E_{S}(K)$ will also be denoted by $\ell_{i,K}$.  Note that the maps $\ell_{i,K}$ are $G$-equivariant.  
\begin{Pro} \label{logdec}
For $x \in \mathbb{C}E_{S}(K)$, we have
\begin{equation*}
\begin{aligned}
\lambda_{K,S}(x) = \sum_{i=1}^{n}\ell_{i,K}(x) \cdot w_{i}
\end{aligned}
\end{equation*}
\end{Pro}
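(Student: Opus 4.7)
The plan is to verify the identity on $u \in E_S(K)$ by direct computation, and then invoke $\mathbb{C}$-linearity (since both sides are $\mathbb{C}$-linear in $x$) to extend it to all of $\mathbb{C}E_S(K)$.

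First, I would expand the right-hand side using the definition of $\ell_{i,K}$:
\begin{equation*}
\sum_{i=1}^{n} \ell_{i,K}(u) \cdot w_{i} = -\sum_{i=1}^{n} \frac{1}{|G_{i}|} \sum_{\sigma \in G} \log |u^{\sigma}|_{w_{i}} \cdot (\sigma^{-1} w_{i}).
\end{equation*}
The key input I would invoke is the compatibility of the normalized absolute value with the Galois action: for any $\sigma \in G$ and any place $w$ of $K$, $|u^{\sigma}|_{w} = |u|_{\sigma^{-1} w}$ (equivalently, $|\sigma u|_{\sigma w} = |u|_{w}$, which is just the statement that the absolute value is transported by $\sigma$). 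Applying this with $w = w_{i}$ rewrites the summand as $\log |u|_{\sigma^{-1} w_{i}} \cdot (\sigma^{-1} w_{i})$.

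Next, I would reindex the inner sum by the place $w := \sigma^{-1} w_{i}$. As $\sigma$ ranges over $G$, the element $\sigma^{-1} w_{i}$ ranges over all places of $K$ lying above $v_{i}$, with each such place hit exactly $|G_{i}|$ times (since the stabilizer of $w_{i}$ under the $G$-action is precisely the decomposition group $G_{i}$). Hence
\begin{equation*}
\sum_{i=1}^{n} \ell_{i,K}(u) \cdot w_{i} = -\sum_{i=1}^{n} \frac{1}{|G_{i}|} \sum_{w \mid v_{i}} |G_{i}| \, \log |u|_{w} \cdot w = -\sum_{i=1}^{n} \sum_{w \mid v_{i}} \log |u|_{w} \cdot w.
\end{equation*}
Since $S_{K}$ is precisely the disjoint union of the sets of places above the $v_{i}$, the double sum collapses to $-\sum_{w \in S_{K}} \log |u|_{w} \cdot w$, which is $\lambda_{K,S}(u)$ by definition.

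There is no real obstacle here; the only thing that requires any care is getting the convention for the Galois action on places (and on absolute values) right, so that the $\sigma^{-1}$ in the definition of $\ell_{i,K}$ matches the orbit parametrization $\sigma \mapsto \sigma^{-1} w_{i}$ and the counting factor $|G_{i}|$ cancels cleanly. Once that bookkeeping is done, the identity is essentially a tautology.
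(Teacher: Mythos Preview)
Your proof is correct and follows essentially the same approach as the paper's own proof. The only cosmetic difference is that the paper decomposes each $\sigma \in G$ explicitly as $\sigma_t h$ with $h \in G_i$ and $\sigma_t$ a coset representative of $G/G_i$, whereas you reindex via the orbit map $\sigma \mapsto \sigma^{-1}w_i$ and invoke orbit--stabilizer; both routes produce the same cancellation of the factor $|G_i|$ and yield the sum over $w \in S_K$.
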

\begin{proof}
Let $\sigma_{1},\ldots,\sigma_{s}$ be a complete set of representatives of $G/G_{i}$ and let $u \in E_{S}(K)$.  Then
\begin{equation*}
\begin{aligned}
\sum_{i=1}^{n}\ell_{i,K}(u) \cdot w_{i} &= \sum_{i=1}^{n}\left(-\frac{1}{|G_{i}|}\sum_{\sigma \in G}\log|u^{\sigma}|_{w_{i}} \cdot \sigma^{-1} \right) w_{i}\\
&= - \sum_{i=1}^{n} \frac{1}{|G_{i}|}\sum_{t = 1}^{s}\sum_{h \in G_{i}} \log|u^{\sigma_{t}h}|_{w_{i}}(\sigma_{t}h)^{-1} \cdot w_{i}\\
&= - \sum_{i=1}^{n}\sum_{t=1}^{s} \log|u^{\sigma_{t}}|_{w_{i}} \sigma_{t}^{-1} \cdot w_{i} \\
&= \lambda_{K,S}(u)
\end{aligned}
\end{equation*}
\end{proof}
Let us look at the behavior of the maps $\ell_{i,K}$ on various isotypical components of $\mathbb{C}E_{S}(K)$.
For the next proposition, it may be helpful to observe
that $\ell_{i,k}:\mathbb{C}E_{S}(k) \longrightarrow \mathbb{C}$ is the $\mathbb{C}[G]$-module morphism defined by
$\ell_{i,k}(u) = - \log|u|_{v_{i}}$
for $u \in E_{S}(k)$.  ($G$ acts trivially here.)
\begin{Pro} \label{isotypical}
Let $\chi \in \widehat{G}$ and $i\in\{1,\dots,n\}$.
\begin{enumerate}
\item
Suppose $\chi\neq\chi_{1}$ and
$G_{i} \not \subseteq {\rm ker}(\chi)$.  If $x \in \mathbb{C}E_{S}(K) \cdot e_{\chi}$, then
$$\ell_{i,K}(x) = 0. $$ \label{u}
\item
If $x \in \mathbb{C}E_{S}(K) \cdot e_{\chi_{1}}$, then
$$\ell_{i,K}(x) = \ell_{i,k}(x) \cdot N_{G}.$$ \label{d}
\end{enumerate}
\end{Pro}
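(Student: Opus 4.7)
The plan is to exploit that $\ell_{i,K}$ is a homomorphism of $\mathbb{C}[G]$-modules: a direct change of variable ($\sigma \mapsto \tau\sigma$) in the defining sum shows that $\ell_{i,K}$ is $G$-equivariant, and since $G$ is abelian this makes $\ell_{i,K}$ automatically $\mathbb{C}[G]$-linear. Consequently, if $x \in \mathbb{C}E_S(K)\cdot e_\chi$, then $\ell_{i,K}(x) = \ell_{i,K}(x)\cdot e_\chi$ lies in the one-dimensional subspace $\mathbb{C}[G]\cdot e_\chi$, so in each part it suffices to compute $\ell_{i,K}(u)\cdot e_\chi$ (respectively $\ell_{i,K}(u)\cdot e_{\chi_{1}}$) for an arbitrary $u \in E_S(K)$.

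For part \eqref{u}, I would expand the product $e_\chi\cdot\ell_{i,K}(u)$ directly, and after the substitution $\rho = \sigma\tau$ obtain
$$e_\chi\cdot\ell_{i,K}(u) = -\frac{1}{|G_i|}\Bigl(\sum_{\sigma\in G}\overline{\chi(\sigma)}\log|u^\sigma|_{w_i}\Bigr)\cdot e_\chi.$$
Decomposing $G = \bigsqcup_{t=1}^{s} G_i\sigma_t$ into $G_i$-cosets and using the identity $|u^\sigma|_{w} = |u|_{\sigma^{-1}(w)}$ together with the fact that $G_i$ stabilizes $w_i$ by definition of the decomposition group, one obtains $|u^{\tau\sigma_t}|_{w_i} = |u^{\sigma_t}|_{w_i}$ whenever $\tau \in G_i$. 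The inner sum over $\tau \in G_i$ then factors out as $\sum_{\tau\in G_i}\overline{\chi(\tau)}$, which vanishes by orthogonality of characters on $G_i$, since $\chi$ restricted to $G_i$ is non-trivial by hypothesis.

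For part \eqref{d}, I would first identify $\mathbb{C}E_S(K)\cdot e_{\chi_1}$ with $\mathbb{C}E_S(k)$: Galois theory gives $E_S(K)^G = E_S(k)$, and since $e_{\chi_{1}}\in\mathbb{C}[G]$ acts as the projector onto the $G$-invariants, taking $G$-invariants commutes with extension of scalars from $\mathbb{Z}$ to $\mathbb{C}$. It thus suffices to check the identity for $u \in E_S(k)$. Since $u^\sigma = u$ for every $\sigma \in G$, the formula collapses to $\ell_{i,K}(u) = -\tfrac{1}{|G_i|}\log|u|_{w_i}\cdot N_G$, and the standard relation $|u|_{w_i} = |u|_{v_i}^{[K_{w_i}:k_{v_i}]} = |u|_{v_i}^{|G_i|}$ (uniform across all types of places under the normalizations of~\S\ref{basic}) reduces this to $-\log|u|_{v_i}\cdot N_G = \ell_{i,k}(u)\cdot N_G$, and the result extends by $\mathbb{C}$-linearity.

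The main place where care is needed is the coset bookkeeping in \eqref{u}: once one correctly sees that $|u^{\tau\sigma_t}|_{w_i} = |u^{\sigma_t}|_{w_i}$ for $\tau\in G_i$, both parts reduce to, respectively, a character orthogonality relation and a tensor-product identification combined with the uniform normalization of local absolute values.
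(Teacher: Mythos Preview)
Your proof is correct and follows essentially the same route as the paper. For part~(\ref{u}), both arguments rest on the $G_i$-invariance of $|u^\sigma|_{w_i}$ together with the vanishing of $\sum_{h\in G_i}\chi(h)$; the paper packages this a bit more abstractly by noting that $h\cdot\ell_{i,K}(x)=\ell_{i,K}(x)$ for all $h\in G_i$ and then multiplying by $e_\chi$, whereas you expand the product $e_\chi\cdot\ell_{i,K}(u)$ explicitly and do the coset decomposition by hand, but the content is identical. For part~(\ref{d}) the paper simply declares it ``a simple calculation left to the reader,'' so your explicit identification $\mathbb{C}E_S(K)\cdot e_{\chi_1}=\mathbb{C}E_S(k)$ and the verification of $|u|_{w_i}=|u|_{v_i}^{|G_i|}$ in fact supply more detail than the paper itself.
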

\begin{proof}
For (\ref{u}), we proceed as follows.  Note that if $h \in G_{i}$, then $\ell_{i,K}(x) = h \cdot \ell_{i,K}(x)$ for all $x \in \mathbb{C}E_{S}(K)$.  Hence, for all $h \in G_{i}$, we have
$$e_{\chi} \cdot \ell_{i,K}(x) = \chi(h) e_{\chi}\cdot \ell_{i,K}(x). $$  
Summing over all $h \in G_{i}$ gives
$$|G_{i}| \cdot e_{\chi} \cdot \ell_{i,K}(x) = \left( \sum_{h \in G_{i}} \chi(h) \right) e_{\chi} \cdot \ell_{i,K}(x). $$
Since $G_{i} \not\subseteq {\rm ker}(\chi)$, we have
$$\sum_{h \in G_{i}}\chi(h) = 0, $$
and this proves (\ref{u}).  Point (\ref{d}) is a simple calculation left to the reader.
\end{proof}

If $r$ is an integer satisfying $1\le r \le |S|$, we want to study the $\mathbb{C}[G]$-module morphism
\begin{equation} \label{wedgelog}
\wedge^{r}\lambda_{K,S}: \bigwedge_{\mathbb{C}[G]}^{r}\mathbb{C}E_{S}(K) \longrightarrow \bigwedge_{\mathbb{C}[G]}^{r}\mathbb{C}Y_{S}(K), 
\end{equation}
defined on pure wedges by the formula $\wedge^{r}\lambda_{K,S}(x_{1}\wedge \ldots \wedge x_{r}) = \lambda_{K,S}(x_{1})\wedge \ldots \wedge \lambda_{K,S}(x_{r})$.
In order to do so, we let
$$\Omega = \{1,\ldots,n \}. $$
For any totally ordered set $\mathcal{X}$, such as $\Omega$, the symbol $\wp_{r}(\mathcal{X})$ will denote the set of $r$-tuples $(x_{1},\ldots,x_{r})$, where $x_{i} \in \mathcal{X}$, and $x_{1}<\ldots<x_{r}$.  If $I \in \wp_{r}(\Omega)$ is such that $I = (i_{1},\ldots,i_{r})$ with $1\le i_{1}< \ldots<i_{r} \le n$, then we set
$$w_{I} = w_{i_{1}}\wedge\ldots\wedge w_{i_{r}} \in \bigwedge_{\mathbb{C}[G]}^{r}\mathbb{C}Y_{S}(K), $$
and we define $R_{I,K}:\bigwedge_{\mathbb{C}[G]}^{r}\mathbb{C}E_{S}(K) \longrightarrow \mathbb{C}[G]$ by the following formula on pure wedges
$$R_{I,K}(x_{1}\wedge \ldots \wedge x_{r}) = {\rm det}\left(\ell_{i_{s},K}(x_{t}) \right)_{s,t=1,\ldots,r}. $$
Note that for all $I \in \wp_{r}(\Omega)$,
the map $R_{I,K}$ is a $\mathbb{C}[G]$-module morphism.
It is worth pointing out that the morphism (\ref{wedgelog}) is injective, since $\mathbb{C}[G]$ is semisimple.  Combining with Proposition \ref{logdec}, one gets for a pure wedge $x = x_{1}\wedge \ldots \wedge x_{r} \in \bigwedge^{r}_{\mathbb{C}[G]} \mathbb{C}E_{S}(K)$ the formula
\begin{equation}
\wedge^{r}\lambda_{K,S}(x) = \sum_{I \in \wp_{r}(\Omega)}R_{I,K}(x) \cdot w_{I}
\end{equation}
Finally, we see what happens when we restrict the maps $R_{I,K}$ to some isotypical components of $\bigwedge_{\mathbb{C}[G]}^{r}\mathbb{C}E_{S}(K)$.
\begin{Pro} \label{isotyp}
Let $\chi \in \widehat{G}$ be such that $\chi \neq \chi_{1}$.  Moreover, let $I \in \wp_{r}(\Omega)$ be such that there exists $i \in I$ for which $G_{i} \not \subseteq {\rm ker}(\chi)$.  Then, 
$$R_{I,K}(x) = 0$$
for all $x \in \bigwedge_{\mathbb{C}[G]}^{r}\mathbb{C}E_{S}(K) \cdot e_{\chi}$.
\end{Pro}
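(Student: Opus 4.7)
The plan is to reduce the claim to the case where $x$ is a pure wedge whose entries all lie in the $\chi$-isotypical component $\mathbb{C}E_S(K)\cdot e_\chi$, and then to produce a zero row in the matrix defining $R_{I,K}$ via Proposition \ref{isotypical}(\ref{u}).

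For the reduction, I would use that $G$ is abelian, so $\mathbb{C}[G]$ is commutative and $e_\chi$ is a central idempotent. On the exterior power $\bigwedge^r_{\mathbb{C}[G]}\mathbb{C}E_S(K)$, any scalar from $\mathbb{C}[G]$ can be moved across any wedge slot, and combining this with $e_\chi = e_\chi^r$ yields
\begin{equation*}
e_\chi \cdot (y_1\wedge \cdots \wedge y_r) \;=\; e_\chi^{\,r}\cdot (y_1\wedge\cdots\wedge y_r) \;=\; (e_\chi y_1)\wedge\cdots\wedge (e_\chi y_r).
\end{equation*}
Any $x\in \bigwedge^r_{\mathbb{C}[G]}\mathbb{C}E_S(K)\cdot e_\chi$ satisfies $x = x\cdot e_\chi$ and is a finite sum of pure wedges; applying the identity above to each summand rewrites $x$ as a finite sum of pure wedges whose entries already lie in $\mathbb{C}E_S(K)\cdot e_\chi$. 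By $\mathbb{C}$-linearity of $R_{I,K}$, it then suffices to prove the vanishing for a single pure wedge $x = x_1\wedge\cdots\wedge x_r$ with every $x_t \in \mathbb{C}E_S(K)\cdot e_\chi$.

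Once this reduction is in place, I would fix an index $i\in I$ with $G_i\not\subseteq {\rm ker}(\chi)$, as provided by the hypothesis. Proposition \ref{isotypical}(\ref{u}) applied to each $x_t$ gives $\ell_{i,K}(x_t) = 0$ for every $t=1,\ldots,r$, so the row of the matrix $(\ell_{i_s,K}(x_t))_{s,t=1,\ldots,r}$ corresponding to $i_s = i$ is identically zero; since $\mathbb{C}[G]$ is commutative, the determinant of such a matrix vanishes, and therefore $R_{I,K}(x) = 0$.

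The only step that requires genuine care is the passage from an arbitrary element of the $\chi$-isotypical component to a sum of pure wedges of $\chi$-isotypical elements, which relies on the distribution identity for $e_\chi$ across wedge slots; once that identification is made, the zero-row argument together with Proposition \ref{isotypical}(\ref{u}) finish the proof.
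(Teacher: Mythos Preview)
Your proof is correct and follows essentially the same approach as the paper's own proof: both distribute the idempotent $e_\chi$ across the wedge slots via $e_\chi\cdot(y_1\wedge\cdots\wedge y_r)=(e_\chi y_1)\wedge\cdots\wedge(e_\chi y_r)$, then invoke Proposition~\ref{isotypical}(\ref{u}) to produce a zero row in the determinant defining $R_{I,K}$. Your write-up is slightly more explicit about the reduction step, but the argument is the same.
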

\begin{proof}
If $x_{1},\ldots,x_{r} \in \mathbb{C}E_{S}(K)$, then
\begin{equation*}
  \begin{aligned}
    R_{I,K}(x_{1}\wedge \ldots \wedge x_{r} \cdot e_{\chi}) &= R_{I,K}((x_{1} \cdot e_{\chi}) \wedge \ldots \wedge (x_{r} \cdot e_{\chi})) \\
    &= {\rm det}(\ell_{i_{s},K}(x_{t}\cdot e_{\chi}))_{s,t=1,\ldots,r}
  \end{aligned}
\end{equation*}
Now, there exists $s \in \{1,\ldots,r \}$ such that $G_{i_{s}} \not \subseteq {\rm ker}(\chi)$.  Hence, Proposition \ref{isotypical} implies 
$$\ell_{i_{s},K}(x_{t} \cdot e_{\chi}) = 0 $$
for all $t=1,\ldots,r$.
\end{proof}

\section{Stark's conjecture} \label{theory}

\subsection{Artin systems of $S_{K}$-units} \label{artinsystem}
Recall that Stark's original idea was to break down the $S_{K}$-regulator into $\chi$ components, and Artin systems of $S_{K}$-units played an important role in doing so.  In this section, we give a definition for an Artin system of $S_{K}$-units.  As before, $K/k$ is an abelian extension of number fields, and $S$ is a finite set of places of $k$ containing $S_{\infty}$.  We start with the following definition.
\begin{Def}
An Artin system of $S_{K}$-units $\mathcal{A}$ is a collection of $S_{K}$-units
$$\mathcal{A} = \{\varepsilon_{w} \, | \, w \in S_{K} \} \subseteq E_{S}(K), $$
such that the group morphism 
$$f:Y_{S}(K) \longrightarrow E_{S}(K)$$ 
defined by $w \mapsto \varepsilon_{w}$ satisfies the following properties:
\begin{enumerate}
\item $f$ is $G$-equivariant,
\item ${\rm ker}(f) = \mathbb{Z} \cdot \alpha$ for some $\alpha \in Y_{S}(K)^{G}$ that satisfies $s_{K}(\alpha) \neq 0$.
\end{enumerate}
\end{Def}
Note that $G$ acts trivially on $\mathbb{Z} \cdot \alpha$.  Moreover, since
$${\rm rank}_{\mathbb{Z}}\left(Y_{S}(K)/\mathbb{Z} \cdot \alpha\right) = |S_{K}|-1, $$
one has ${\rm coker}(f)$ is finite.  As a result, an Artin system of $S_{K}$-units can be conveniently described by an exact sequence of $\mathbb{Z}[G]$-modules
\begin{equation} \label{artinles}
  0 \longrightarrow \mathbb{Z} \cdot \alpha \longrightarrow Y_{S}(K) \stackrel{f}\longrightarrow E_{S}(K) \longrightarrow A \longrightarrow 0,
\end{equation}
where $A$ is some $\mathbb{Z}[G]$-module with finite cardinality.  Letting $d_{0} = s_{K}(\alpha)$, and applying the snake lemma to the commutative diagram
\begin{equation*}
        \begin{CD}
                0 @>>> 0 @>>> \mathbb{Z} \cdot \alpha @>{s_{K}}>> \mathbb{Z} \cdot d_{0} @>>>0 \\
                & & @VVV @VVV @VVV \\
                0  @>>> X_{S}(K) @>>> Y_{S}(K) @>{s_{K}}>> \mathbb{Z} @>>> 0\\
        \end{CD}
\end{equation*}
leads to the short exact sequence of $\mathbb{Z}[G]$-modules
$$0 \longrightarrow X_{S}(K) \longrightarrow Y_{S}(K)/\mathbb{Z} \cdot \alpha \longrightarrow \mathbb{Z}/d_{0} \mathbb{Z} \longrightarrow 0.$$
Therefore, the morphism $f:Y_{S}(K) \longrightarrow E_{S}(K)$ induces by restriction an injective morphism of $\mathbb{Z}[G]$-modules
\begin{equation} \label{injectiveX}
f: X_{S}(K) \hookrightarrow E_{S}(K). 
\end{equation}
In particular, the image of $X_{S}(K)$ via $f$ gives a group of $S_{K}$-units that is of finite index in $E_{S}(K)$.
\begin{Pro} \label{unitschi}
Let $\mathcal{A} =\{\varepsilon_{w}\, | \, w \in S_{K} \}$ be an Artin system of $S_{K}$-units, and let $\chi \in \widehat{G}$ be a non-trivial character.  Furthermore, let $v \in S$ and let $w$ be a place of $K$ lying above $v$.  In $\mathbb{C}E_{S}(K)$, we have
\begin{enumerate}
\item If $G_{v} \subseteq {\rm ker}(\chi)$, then $\varepsilon_{w}\cdot e_{\chi} \neq 0$,
\item If $G_{v} \not \subseteq {\rm ker}(\chi)$, then $\varepsilon_{w} \cdot e_{\chi} = 0$.
\end{enumerate}
\end{Pro}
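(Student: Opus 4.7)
The plan is to transport Proposition \ref{useful} from $\mathbb{C}Y_{S}(K)$ to $\mathbb{C}E_{S}(K)$ by showing that, on every non-trivial $\chi$-isotypic component, the map $f$ induced by the Artin system is an isomorphism.

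First I would tensor the exact sequence (\ref{artinles}) with $\mathbb{C}$. Since $A$ is finite, $\mathbb{C}\otimes_{\mathbb{Z}}A = 0$, and one obtains a short exact sequence of $\mathbb{C}[G]$-modules
$$0 \longrightarrow \mathbb{C}\cdot\alpha \longrightarrow \mathbb{C}Y_{S}(K) \stackrel{\mathbb{C}f}\longrightarrow \mathbb{C}E_{S}(K) \longrightarrow 0.$$
Because $\alpha \in Y_{S}(K)^{G}$, a direct computation with the idempotent
$$e_{\chi}\cdot\alpha \;=\; \frac{1}{|G|}\Bigl(\sum_{\sigma \in G}\chi(\sigma)\Bigr)\alpha$$
shows that $e_{\chi}\cdot\alpha = 0$ for every non-trivial $\chi \in \widehat{G}$. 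Hence the kernel $\mathbb{C}\cdot\alpha$ is concentrated in the trivial isotypic component, and applying $e_{\chi}$ (with $\chi \neq \chi_{1}$) to the sequence above yields an isomorphism
$$\mathbb{C}f\cdot e_{\chi}: \mathbb{C}Y_{S}(K)\cdot e_{\chi} \stackrel{\simeq}{\longrightarrow} \mathbb{C}E_{S}(K)\cdot e_{\chi}.$$

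Next, using the $G$-equivariance of $f$ and $f(w) = \varepsilon_{w}$, I would rewrite
$$\varepsilon_{w}\cdot e_{\chi} \;=\; f(w)\cdot e_{\chi} \;=\; \mathbb{C}f(w\cdot e_{\chi}),$$
so the isomorphism just established implies $\varepsilon_{w}\cdot e_{\chi} \neq 0$ in $\mathbb{C}E_{S}(K)$ if and only if $w\cdot e_{\chi} \neq 0$ in $\mathbb{C}Y_{S}(K)$. Both statements of the proposition now follow immediately from Proposition \ref{useful}, which already characterizes exactly when $w\cdot e_{\chi}$ vanishes in terms of whether $G_{v}\subseteq \ker(\chi)$.

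There is no real obstacle here; the only point that requires a moment's care is verifying that the kernel of $\mathbb{C}f$ is killed by every non-trivial idempotent, which reduces to the $G$-invariance of $\alpha$ guaranteed by the second axiom in the definition of an Artin system. Once that is in place, the proof is essentially a two-line reduction to Proposition \ref{useful}.
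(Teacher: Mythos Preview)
Your proof is correct and follows essentially the same approach as the paper: tensor the defining exact sequence with $\mathbb{C}$, observe that $\mathbb{C}\cdot\alpha$ lies in the trivial isotypic component so that $f_{\mathbb{C}}$ restricts to an isomorphism on each non-trivial $e_{\chi}$-component, and then invoke Proposition~\ref{useful}. The only difference is that you spell out explicitly why $e_{\chi}\cdot\alpha=0$ and why $\varepsilon_{w}\cdot e_{\chi}=f_{\mathbb{C}}(w\cdot e_{\chi})$, whereas the paper leaves these implicit.
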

\begin{proof}
Tensoring the exact sequence (\ref{artinles}) with $\mathbb{C}$ leads to the short exact sequence of $\mathbb{C}[G]$-modules:
$$0 \longrightarrow \mathbb{C} \cdot \alpha \longrightarrow \mathbb{C}Y_{S}(K) \stackrel{f_{\mathbb{C}}}{\longrightarrow} \mathbb{C}E_{S}(K) \longrightarrow 0. $$
Since $\chi \neq \chi_{1}$ and $G$ acts trivially on $\mathbb{C} \cdot \alpha$, we get an isomorphism of $\mathbb{C}[G]$-modules
$$f_{\mathbb{C}}^{\chi}:\mathbb{C}Y_{S}(K) \cdot e_{\chi} \stackrel{\simeq}{\longrightarrow} \mathbb{C}E_{S}(K)\cdot e_{\chi}. $$
The result then follows from Proposition \ref{useful}.
\end{proof}

From now on, for $v \in S$, we let 
$$T_{v}(K) = \sum_{w \, | \, v}w \in Y_{S}(K).$$
Note that $\alpha \in Y_{S}(K)$ is fixed by $G$ if and only if
$$\alpha = \sum_{v \in S}n_{v} \cdot T_{v}(K), $$
for some $n_{v} \in \mathbb{Z}$.

\begin{The} \label{artinexist}
Let $K/k$ be a finite abelian extension of number fields and let $S$ be a finite set of places of $k$ containing $S_{\infty}$.  Then there exist Artin systems of $S_{K}$-units.
\end{The}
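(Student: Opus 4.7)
The plan is to construct $f$ by first producing a $\mathbb{Q}[G]$-linear surjection from the rationalization of $Y_S(K)$ onto that of $E_S(K)$ with one-dimensional trivial kernel, and then descending it back to a genuine $\mathbb{Z}[G]$-morphism landing in $E_S(K)$ itself.

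First, I would establish the rationality isomorphism $\mathbb{Q}E_S(K)\cong\mathbb{Q}X_S(K)$ of $\mathbb{Q}[G]$-modules. The $\mathbb{C}[G]$-isomorphism~\eqref{Gisom} coming from Dirichlet's $S$-unit theorem shows that these two modules have equal $\mathbb{C}$-characters, and the classical fact—alluded to in the introduction as Herbrand's rationality input—that a finitely generated $\mathbb{Q}[G]$-module is determined up to isomorphism by its $\mathbb{C}$-character yields a $\mathbb{Q}[G]$-isomorphism $\psi:\mathbb{Q}X_S(K)\xrightarrow{\sim}\mathbb{Q}E_S(K)$. Using semisimplicity of $\mathbb{Q}[G]$ (Maschke), the rationalization of~\eqref{importantses} splits, so I fix a $\mathbb{Q}[G]$-retraction $\pi:\mathbb{Q}Y_S(K)\to\mathbb{Q}X_S(K)$ of the inclusion and set $\widetilde f:=\psi\circ\pi$. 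Then $\widetilde f$ is a $\mathbb{Q}[G]$-surjection whose kernel is a one-dimensional $\mathbb{Q}[G]$-summand complementary to $\mathbb{Q}X_S(K)$, hence spanned by some $\alpha'\in(\mathbb{Q}Y_S(K))^G$ on which $s_K$ restricts to an isomorphism onto $\mathbb{Q}$; in particular $s_K(\alpha')\neq 0$.

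Next, I would integrate $\widetilde f$ to obtain an actual $\mathbb{Z}[G]$-morphism into $E_S(K)$. Since $Y_S(K)$ is finitely generated and $E_S(K)/\mu(K)$ is a lattice in $\mathbb{Q}E_S(K)$, there is a positive integer $N$ such that $N\widetilde f(w)\in E_S(K)/\mu(K)$ for every $w\in S_K$, giving a $\mathbb{Z}[G]$-morphism $\bar f:Y_S(K)\to E_S(K)/\mu(K)$. To lift $\bar f$ across the short exact sequence $0\to\mu(K)\to E_S(K)\to E_S(K)/\mu(K)\to 0$, I would invoke Shapiro's lemma, which gives
$$\mathrm{Ext}^1_{\mathbb{Z}[G]}\bigl(Y_S(K),\mu(K)\bigr)=\bigoplus_{v\in S}H^1\bigl(G_v,\mu(K)\bigr),$$
a group annihilated by $|\mu(K)|$. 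Thus after replacing $N$ by $|\mu(K)|\cdot N$ the obstruction vanishes and a $\mathbb{Z}[G]$-linear lift $f:Y_S(K)\to E_S(K)$ exists. Setting $\varepsilon_w:=f(w)$, $G$-equivariance is automatic. For the kernel, I would note that $f\otimes\mathbb{Q}$ is a non-zero rational multiple of $\widetilde f$, so $\{y\in Y_S(K):f(y)\in\mu(K)\}=Y_S(K)\cap\mathbb{Q}\alpha'=\mathbb{Z}\alpha_0$ for a primitive generator $\alpha_0\in Y_S(K)^G$; the restriction of $f$ to this infinite cyclic group lands in the finite group $\mu(K)$, so its kernel is $\mathbb{Z}\cdot\alpha$ with $\alpha$ a positive multiple of $\alpha_0$, still fixed by $G$ and still satisfying $s_K(\alpha)\neq 0$.

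The main obstacle is the descent step in the first paragraph: Dirichlet alone supplies the isomorphism only over $\mathbb{R}$, and descending to $\mathbb{Q}[G]$-modules is the non-elementary representation-theoretic input (Herbrand). The remaining work—splitting via semisimplicity, clearing denominators, and killing the $\mu(K)$-torsion obstruction by an extra factor of $|\mu(K)|$—is formal once the rationality isomorphism is in hand.
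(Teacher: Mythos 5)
Your proof is correct, but it takes a genuinely different route from the paper's. The paper reproduces Artin's original constructive argument: for each chosen place $w_{i}$ one finds $\beta_{i}\in E_{S}(K)$ with $|\beta_{i}|_{w_{i}}>1$ and $|\beta_{i}|_{w}<1$ for all other $w\in S_{K}$, norms it down to the decomposition field via $\gamma_{i}=\beta_{i}^{N_{G_{i}}}$, and spreads it around by the $G$-action to get $\varepsilon_{w}=\gamma_{i}^{\sigma}$. The single relation $\prod_{w}\varepsilon_{w}^{n_{w}}=1$, forced by the diagonal-dominance determinant lemma (Lemma \ref{specialdet}), then produces the generator $\alpha=\sum_{v}n_{v}T_{v}(K)$ of the kernel, and $s_{K}(\alpha)=\sum_{v}n_{v}|G|/|G_{v}|\neq0$ is visibly positive once signs are normalized. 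Your argument instead obtains the $\mathbb{Q}[G]$-isomorphism $\mathbb{Q}X_{S}(K)\cong\mathbb{Q}E_{S}(K)$ from Noether--Deuring rationality (the ``Herbrand'' input), splits the rationalized sequence by Maschke, clears denominators to land in $E_{S}(K)/\mu(K)$, and kills the lifting obstruction in $\mathrm{Ext}^{1}_{\mathbb{Z}[G]}(Y_{S}(K),\mu(K))\cong\bigoplus_{v\in S}H^{1}(G_{v},\mu(K))$ by an extra factor of $|\mu(K)|$. All of these steps are sound (the $G$-triviality of $\ker\pi$, the Eckmann--Shapiro identification, the additivity of the obstruction class, and the final intersection $Y_{S}(K)\cap\mathbb{Q}\alpha'=\mathbb{Z}\alpha_{0}$ are each checked correctly). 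What each approach buys: Artin's argument is elementary and, more importantly for this paper, algorithmic --- Step~3 of the algorithm in \S\ref{algorithm} directly implements the search for the $\beta_{i}$ --- and it yields the relation and $\alpha$ explicitly. Your route is cleaner abstractly and extends without change to the non-abelian case, but it is non-constructive and, somewhat ironically, leans on precisely the Herbrand rationality result that the paper's introduction notes Artin systems were historically designed to supplement.
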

\begin{proof}
For the reader's convenience, we include here the proof contained in \cite{Artin:1932}.  (In \cite{Artin:1932}, only the case $S = S_{\infty}$ was treated, but the argument works for any finite set of places $S$ that contains $S_{\infty}$.)  For each $i=1,\ldots,n$, let $\beta_{i} \in E_{S}(K)$ be such that
\begin{enumerate}
\item $|\beta_{i}|_{w_{i}} > 1$
\item $|\beta_{i}|_{w}< 1$ for all $w \in S_{K}$ satisfying $w \neq w_{i}$.
\end{enumerate}
The existence of $S_{K}$-units with those properties is a well-known result of algebraic number theory.
See \S~$1$ of Chapter~$\textrm{V}$ in \cite{Lang:1994} for instance.  Then set
$$\gamma_{i} = \beta_{i}^{N_{i}}, $$
where $N_{i} = N_{G_{i}}$.  Note that $\gamma_{i} \in K^{G_{i}}$.  A simple calculation shows that the $S_{K}$-units $\gamma_{i}$ still satisfy
\begin{enumerate}
\item $|\gamma_{i}|_{w_{i}} > 1$
\item $|\gamma_{i}|_{w} < 1$ for all $w \in S_{K}$ satisfying $w \neq w_{i}$.
\end{enumerate}
If $w \, | \, v_{i}$, then there exists $\sigma \in G$ such that $w = w_{i}^{\sigma}$.  Set
$$\varepsilon_{w} = \gamma_{i}^{\sigma}. $$
The $S_{K}$-units $\varepsilon_{w}$ do not depend on the choice of $\sigma$.  Moreover, they satisfy
$$\varepsilon_{w}^{\tau} = \varepsilon_{w^{\tau}}$$
for all $\tau \in G$ and also
\begin{enumerate}
\item $|\varepsilon_{w}|_{w} > 1$
\item $|\varepsilon_{w}|_{w'} < 1$ for all places $w' \in S_{K}$ satisfying $w' \neq w$.
\end{enumerate}
By Lemma \ref{specialdet} below, removing any $S_{K}$-unit from the set $\{\varepsilon_{w} \, | \, w \in S_{K}\}$ gives a system of independent $S_{K}$-units.  Therefore, there is precisely one relation among them, say
$$\prod_{w \in S_{K}}\varepsilon_{w}^{n_{w}} = 1 $$
for some integers $n_{w}$.  Since the group $G$ acts transitively on the places lying above a fixed place $v \in S$, we see that for all $v \in S$, there exists $n_{v} \in \mathbb{Z}$ such that $n_{w} = n_{v}$ whenever $v \, | \, w$.  Taking the inverses of some of the $\varepsilon_{w}$ if necessary, one can assume that $n_{v} \ge 0$ for all $v \in S$.  The set $\{\varepsilon_{w}\, | \, w \in S_{K}  \}$ is the desired Artin system of $S_{K}$-units, since the kernel of the $\mathbb{Z}[G]$-module morphism $f:Y_{S}(K) \longrightarrow E_{S}(K)$ defined by $f(w) = \varepsilon_{w}$ is $\mathbb{Z} \cdot \alpha$, where
$$\alpha = \sum_{v \in S}n_{v} \cdot T_{v}(K), $$
and $s_{K}(\alpha) = \sum_{v \in S}n_{v} \frac{|G|}{|G_{v}|} \neq 0$.
\end{proof}
\begin{Rem}
It is always possible to take 
$\alpha = \sum_{v \in S}T_{v}(K)$
just by setting $\delta_{w} = \varepsilon_{w}^{n_{v}}$ in the last proof.  Then one has
$$\prod_{w \in S_{K}}\delta_{w} = 1. $$
Numerically, it is more convenient to allow any $\alpha \in Y_{S}(K)^{G}$, because the index
$$m = [E_{S}(K):\mu(K) \cdot f(X_{S}(K))] $$
is usually smaller.
\end{Rem}

The following lemma is simple and we skip its proof.
\begin{Lem} \label{specialdet}
Let $A = (a_{ij}) \in M_{n}(\mathbb{R})$ be a matrix satisfying
\begin{enumerate}
\item $a_{ij}<0$ whenever $i \neq j$,
\item $\sum_{j=1}^{n}a_{ij}>0$ for all $i=1,\ldots,n$.
\end{enumerate}
Then ${\rm det}(A) \neq 0$.
\end{Lem}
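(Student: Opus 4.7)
The plan is to recognize the hypotheses as a disguised form of strict diagonal dominance and then run the standard kernel-chasing argument that such matrices are invertible.

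First, I would unpack the hypotheses to reveal structure. For any fixed row $i$, the inequality $\sum_{j=1}^{n} a_{ij} > 0$ combined with $a_{ij} < 0$ for $j\neq i$ gives
\[
a_{ii} > -\sum_{j\neq i} a_{ij} = \sum_{j\neq i}|a_{ij}|,
\]
so in particular $a_{ii} > 0$ and $A$ is strictly diagonally dominant. This reformulation is the whole point; once it is in place the rest is routine.

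Next, I would argue by contradiction: suppose $\det(A) = 0$, so there exists a nonzero vector $x = (x_1,\ldots,x_n)^T \in \mathbb{R}^n$ with $Ax = 0$. Pick an index $i$ for which $|x_i| = \max_j |x_j| > 0$, and after replacing $x$ by $-x$ if necessary, assume $x_i > 0$. The $i$-th coordinate of the equation $Ax = 0$ can be rewritten as
\[
a_{ii}\, x_i \;=\; \sum_{j\neq i}(-a_{ij})\, x_j.
\]
Using $-a_{ij} > 0$ and $x_j \le |x_j| \le x_i$, the right-hand side is bounded above by $x_i \sum_{j\neq i}(-a_{ij})$. Dividing through by $x_i > 0$ yields $a_{ii} \le \sum_{j\neq i}(-a_{ij})$, i.e.\ $\sum_{j} a_{ij} \le 0$, contradicting the second hypothesis.

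I do not expect any serious obstacle: the only mild subtlety is making sure the sign of the extremal coordinate is handled correctly (which is why one normalizes to $x_i > 0$) and observing that all $x_j$ with $j\neq i$ satisfy $x_j \le x_i$ rather than only $|x_j| \le x_i$, so that the inequality $(-a_{ij})x_j \le (-a_{ij})x_i$ indeed goes in the right direction. With that bookkeeping in place, the proof is a few lines.
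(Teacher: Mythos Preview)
Your argument is correct: the two hypotheses force strict diagonal dominance with positive diagonal, and the standard ``pick the coordinate of largest absolute value'' contradiction goes through exactly as you wrote it. The paper itself does not give a proof of this lemma (it is stated as simple and the proof is skipped), so there is nothing to compare; your write-up would serve perfectly well as the omitted proof.
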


\begin{Rem}
We remark that an Artin system of $S_{K}$-units exists as well in the case of a non-abelian Galois extension $K/k$, but we restrict ourselves to the abelian case in this paper.
\end{Rem}

\subsection{The Stark regulator} \label{starkreg}
If $L$ is any number field, let us start by reminding the reader about the regulator of a subgroup of units of $L$.  For the moment, we fix a finite set of places $S$ of $L$, and we let $n = |S|$.
\begin{Def}
Given a subgroup $\mathcal{U}$ of $E_{S}(L)$ such that $E_{S}(L)/\mathcal{U}$ is finite, we define ${\rm Reg}_{L,S}(\mathcal{U}) \in \mathbb{R}/\{\pm 1 \}$ as follows.  If $\{\eta_{1},\ldots,\eta_{n-1}\}$ is a set of units whose classes in $\mathcal{U}/\mathcal{U}_{{\rm tor}}$ form a $\mathbb{Z}$-basis, then consider the matrix 
\begin{equation} \label{mat}
\left(\log|\eta_{j}|_{w} \right) \in M_{n,n-1}(\mathbb{R}), 
\end{equation}
where $j=1,\ldots, n-1$ and $w \in S$.  The regulator ${\rm Reg}_{L,S}(\mathcal{U})$ is defined to be the determinant of the matrix (\ref{mat}) after removing one row.
\end{Def}
Note that removing a different row or choosing another $\mathbb{Z}$-basis for $\mathcal{U}/\mathcal{U}_{{\rm tor}}$ will change the determinant by at most a sign.  Hence, this definition makes sense modulo $\{\pm 1\}$.  Also, we have
$$R_{L,S} = |{\rm Reg}_{L,S}(E_{S}(L))|. $$
The following proposition is well-known, and we skip its proof.
\begin{Pro} \label{subreg}
Given a subgroup $\mathcal{U}$ of $E_{S}(L)$ such that $E_{S}(L)/\mathcal{U}$ is finite, we have
$$[E_{S}(L):\mu(L) \cdot \mathcal{U}] = \frac{|{\rm Reg}_{L,S}(\mathcal{U})|}{R_{L,S}} $$
\end{Pro}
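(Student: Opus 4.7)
The plan is to interpret both sides of the claimed identity as statements about lattices in Euclidean space via the Dirichlet logarithmic embedding, and then recognize the index of the embedded lattices as the ratio of their covolumes.

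First, I would introduce the logarithmic map $\lambda : E_S(L) \to \mathbb{R}^S$ defined by $u \mapsto (\log|u|_w)_{w \in S}$. By the product formula, the image lies in the hyperplane
$$H = \Bigl\{ x \in \mathbb{R}^S \;\Bigm|\; \sum_{w \in S} x_w = 0 \Bigr\},$$
and the $S$-unit theorem tells us that $\ker(\lambda) = \mu(L)$, with image $\Lambda_0 := \lambda(E_S(L))$ a full-rank $\mathbb{Z}$-lattice in $H$. Since $E_S(L)/\mathcal{U}$ is finite, the image $\Lambda_1 := \lambda(\mathcal{U})$ is a sublattice of $\Lambda_0$ of the same rank $n-1$.

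Next, I would observe that $\lambda(\mu(L)\cdot\mathcal{U}) = \lambda(\mathcal{U}) = \Lambda_1$ and that $\lambda$ induces a group isomorphism $E_S(L)/\mu(L) \stackrel{\sim}{\to} \Lambda_0$. Under this isomorphism, $\mu(L)\cdot\mathcal{U}/\mu(L)$ corresponds exactly to $\Lambda_1$, so
$$[E_S(L) : \mu(L)\cdot\mathcal{U}] = [\Lambda_0 : \Lambda_1].$$

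Finally, I would compute $[\Lambda_0:\Lambda_1]$ as a ratio of covolumes. Pick any coordinate projection $\pi : H \to \mathbb{R}^{n-1}$ obtained by forgetting one fixed place $w_0 \in S$; since $H$ is the graph of $x_{w_0} = -\sum_{w \neq w_0} x_w$, this is a linear isomorphism sending either lattice to a lattice in $\mathbb{R}^{n-1}$ whose covolume, with respect to Lebesgue measure, equals the absolute value of the determinant of the $(n-1)\times(n-1)$ matrix of the chosen $\mathbb{Z}$-basis in the surviving coordinates. For a basis $\{\eta_j\}$ of $\mathcal{U}/\mathcal{U}_{\mathrm{tor}}$ this is precisely $|\mathrm{Reg}_{L,S}(\mathcal{U})|$ by definition, and the same calculation applied to a basis of $E_S(L)/\mu(L)$ gives $R_{L,S}$. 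Using the standard formula $[\Lambda_0:\Lambda_1] = \mathrm{covol}(\Lambda_1)/\mathrm{covol}(\Lambda_0)$ then yields
$$[E_S(L) : \mu(L)\cdot\mathcal{U}] \;=\; \frac{|\mathrm{Reg}_{L,S}(\mathcal{U})|}{R_{L,S}}.$$

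The only real point to verify is that the definition of $\mathrm{Reg}_{L,S}(\mathcal{U})$ is independent of which row is deleted and which basis is chosen, which is immediate from the product formula (any two rows of the full matrix sum to the negative of the rest, so cofactor expansion shows all $(n-1)\times(n-1)$ minors agree up to sign) and from the fact that a change of $\mathbb{Z}$-basis has $\det = \pm 1$. This is the mildly tedious step, but it is the same verification that justifies the definition in the first place, so there is no substantial obstacle beyond careful bookkeeping.
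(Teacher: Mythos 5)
Your proof is correct. The paper itself explicitly skips the proof of Proposition~\ref{subreg}, declaring it well-known, so there is no argument in the text to compare against. Your lattice-covolume argument via the Dirichlet logarithmic embedding is the standard one: you correctly identify the index $[E_S(L):\mu(L)\cdot\mathcal{U}]$ with the lattice index $[\Lambda_0:\Lambda_1]$ inside the trace-zero hyperplane (using that $\ker\lambda=\mu(L)$), and you correctly convert the covolume ratio into the ratio of regulators by projecting away one coordinate; the Jacobian of that projection cancels in the quotient, and the resulting $(n-1)\times(n-1)$ determinants are the regulators by definition. Your closing remark about well-definedness of $\mathrm{Reg}_{L,S}(\mathcal{U})$ modulo sign is also the right justification. (One cosmetic point: the paper's $\lambda_{K,S}$ carries a minus sign you omit, but that is immaterial for an index computation that only sees absolute values.)
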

We now go back to our setting where $K/k$ is a finite abelian extension of number fields and $S$ is a finite set of places of $k$ containing $S_{\infty}$.  Even though it is not clear how to break up $R_{K,S}$ into $\chi$-components, it is possible to do so with ${\rm Reg}_{K,S}(\mathcal{U})$, where $\mathcal{U}$ is a group of $S_{K}$-units coming from an Artin system of $S_{K}$-units.  (Here we write ${\rm Reg}_{K,S}(\mathcal{U})$ rather than ${\rm Reg}_{K,S_{K}}(\mathcal{U})$ in order to simplify the notation.)  This fact was recognized by Stark in \cite{Stark:1975}, and this gives a way of breaking up $R_{K,S}$ into $\chi$-components, at least up to a rational number, namely the index $[E_{S}(K):\mu(K) \cdot \mathcal{U}]$.  Here is how this works.  Starting with an Artin system of $S_{K}$-units $\mathcal{A}$ and its corresponding morphism $f: Y_{S}(K) \longrightarrow E_{S}(K)$, we have an induced morphism of $\mathbb{C}[G]$-modules $f_{\mathbb{C}}:\mathbb{C}Y_{S}(K) \longrightarrow \mathbb{C}E_{S}(K)$.  We will now look at the isomorphism of $\mathbb{C}[G]$-modules
$$f_{\mathbb{C}} \circ \lambda_{K,S}:\mathbb{C}E_{S}(K) \longrightarrow \mathbb{C}E_{S}(K). $$
Since this map is a linear endomorphism of the $\mathbb{C}$-vector space $\mathbb{C}E_{S}(K)$, we can talk about its determinant.  Recall also that from (\ref{injectiveX}), $f(X_{S}(K))$ is a group of finite index in $E_{S}(K)$.
\begin{Pro} \label{reg}
Let $\mathcal{A} = \{\varepsilon_{w} \, | \, w \in S_{K} \}$ be an Artin system of $S_{K}$-units with corresponding morphism $f$.  Then
$${\rm det}(f_{\mathbb{C}} \circ \lambda_{K,S}) = \pm {\rm Reg}_{K,S}(\mathcal{U}_{f}), $$
where $\mathcal{U}_{f} = f(X_{S}(K))$.
\end{Pro}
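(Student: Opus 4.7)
The plan is to pick a $\mathbb{C}$-basis of $\mathbb{C}E_{S}(K)$ built directly from the Artin system $\mathcal{A}$ and show that, with respect to that basis, the matrix of $f_{\mathbb{C}}\circ\lambda_{K,S}$ coincides, up to a sign, with the minor defining ${\rm Reg}_{K,S}(\mathcal{U}_{f})$.

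First, label $S_{K}=\{w_{1},\ldots,w_{m}\}$ with $m=|S_{K}|$, so that $X_{S}(K)$ is $\mathbb{Z}$-free on the basis $\{w_{i}-w_{m}:1\le i\le m-1\}$. By (\ref{injectiveX}), the restriction of $f$ to $X_{S}(K)$ is injective, hence the units
$$\eta_{i}:=f(w_{i}-w_{m})=\varepsilon_{w_{i}}\varepsilon_{w_{m}}^{-1},\qquad i=1,\ldots,m-1,$$
form a $\mathbb{Z}$-basis of $\mathcal{U}_{f}$. In particular $\mathcal{U}_{f}$ is free of rank $m-1$, and because it has finite index in $E_{S}(K)$, the $\eta_{i}$ also constitute a $\mathbb{C}$-basis of $\mathbb{C}E_{S}(K)$.

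With this basis fixed, I would compute
$$(f_{\mathbb{C}}\circ\lambda_{K,S})(\eta_{j})=-\sum_{i=1}^{m}\log|\eta_{j}|_{w_{i}}\cdot\varepsilon_{w_{i}}\in\mathbb{C}E_{S}(K),$$
then rewrite each $\varepsilon_{w_{i}}$ with $i<m$ as $\eta_{i}+\varepsilon_{w_{m}}$. The resulting coefficient of $\varepsilon_{w_{m}}$, namely $-\sum_{i=1}^{m}\log|\eta_{j}|_{w_{i}}$, vanishes by the product formula (\ref{prodformula}) applied to the $S_{K}$-unit $\eta_{j}$. Consequently $f_{\mathbb{C}}\circ\lambda_{K,S}$ is represented in the basis $\{\eta_{i}\}$ by the matrix $(-\log|\eta_{j}|_{w_{i}})_{1\le i,j\le m-1}$, whose determinant is $(-1)^{m-1}$ times the minor of $(\log|\eta_{j}|_{w})_{w\in S_{K},\,1\le j\le m-1}$ obtained by deleting the row for $w_{m}$. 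By the definition of ${\rm Reg}_{K,S}(\mathcal{U}_{f})$ preceding the statement, this minor equals $\pm\,{\rm Reg}_{K,S}(\mathcal{U}_{f})$, and the claim follows.

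The argument is essentially pure linear algebra, so there is no genuine obstacle. The one substantive step is the appeal to the product formula to kill the coefficient of $\varepsilon_{w_{m}}$, without which the matrix would not match a single minor of the regulator array; this cancellation is also the reason the equality can only be pinned down modulo $\pm 1$.
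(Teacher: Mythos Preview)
Your argument is correct and follows essentially the same route as the paper's own proof: both choose a distinguished place $w_{m}$ (the paper calls it $w_{0}$), use the $\mathbb{C}$-basis $\{\varepsilon_{w_{i}}\varepsilon_{w_{m}}^{-1}\}$ of $\mathbb{C}E_{S}(K)$, and compute the matrix of $f_{\mathbb{C}}\circ\lambda_{K,S}$ in that basis. Your version is in fact slightly more explicit, since you spell out the product-formula cancellation of the $\varepsilon_{w_{m}}$-coefficient that the paper leaves implicit.
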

\begin{proof}
Let $w_{0}$ be any place of $S_{K}$.  Note that the images of the $S_{K}$-units $\{\varepsilon_{w}\varepsilon_{w_{0}}^{-1} \, | \, w \in S_{K}, w \neq w_{0} \}$ in $\mathbb{C}E_{S}(K)$ form a basis of the $\mathbb{C}$-vector space $\mathbb{C}E_{S}(K)$.  These $S_{K}$-units also form a $\mathbb{Z}$-basis of $\mathcal{U}_{f}$ modulo its torsion subgroup.  We calculate
\begin{equation*}
\begin{aligned}
f_{\mathbb{C}} \circ \lambda_{K,S}(\varepsilon_{w}\varepsilon_{w_{0}}^{-1}) &= f_{\mathbb{C}}\left(\sum_{v \in S_{K}}\log|\varepsilon_{w}\varepsilon_{w_{0}}^{-1}|_{v} \cdot v \right) \\
&= \sum_{\substack{v \in S_{K} \\ v \neq w_{0}}}\log|\varepsilon_{w} \varepsilon_{w_{0}}^{-1}|_{v} \cdot (\varepsilon_{v}\varepsilon_{w_{0}}^{-1}).
\end{aligned}
\end{equation*}
Hence ${\rm det}(f_{\mathbb{C}} \circ \lambda_{K,S}) = \pm {\rm Reg}_{K,S}(\mathcal{U}_{f})$ as we wanted to show.
\end{proof}
Moreover, $f_{\mathbb{C}} \circ \lambda_{K,S}$ is a morphism of $\mathbb{C}[G]$-modules, we have
\begin{equation} \label{prodreg}
  {\rm det}(f_{\mathbb{C}} \circ \lambda_{K,S}) = \prod_{\chi \in \widehat{G}}{\rm det}\left((f_{\mathbb{C}} \circ \lambda_{K,S})^{\chi} \right).
\end{equation}

\begin{Def}
  Let $\mathcal{A}$ be an Artin system of $S_{K}$-units with corresponding morphism $f$.  Given $\chi \in \widehat{G}$, one defines the Stark regulator associated to $\chi$ and $\mathcal{A}$ to be
  $$R(\chi,\mathcal{A}) = {\rm det}\left( (f_{\mathbb{C}} \circ \lambda_{K,S})^{\overline{\chi}} \right). $$
\end{Def}

Combining Proposition \ref{subreg}, Proposition \ref{reg} and (\ref{prodreg}) leads to the formula
\begin{equation} \label{regulatorbroken}
  \pm R_{K,S} = \frac{1}{[E_{S}(K):\mu(K)\cdot \mathcal{U}_{f}]}\prod_{\chi \in \widehat{G}} R(\chi,\mathcal{A}).
\end{equation}

We now present an alternative description of the Stark regulator.  Given an Artin system of $S_{K}$-units $\mathcal{A} = \{\varepsilon_{w} \, | \, w \in S_{K} \}$ and an integer $i$ satisfying $1 \le i \le |S|$, we will write $\varepsilon_{i}$ rather than $\varepsilon_{w_{i}}$.
\begin{Pro} \label{concretestarkreg}
Let $\mathcal{A} = \{\varepsilon_{w}\, | \, w \in S_{K} \}$ be an Artin system of $S_{K}$-units.
\begin{enumerate} 
\item Let $\chi \in \widehat{G}$ be such that $\chi \neq \chi_{1}$.  Let \label{unnn}
$$r = {\rm ord}_{s=0}L_{K,S}(s,\chi), $$
and let $I = (i_{1},\ldots,i_{r})$ be the unique element of $\wp_{r}(\Omega)$ such that $G_{i_{t}} \subseteq {\rm ker}(\chi)$ for all $t=1,\ldots,r$.  Then, one has 
$$R(\chi,\mathcal{A}) = \overline{\chi}\left(R_{I,K}(\varepsilon_{i_{1}} \wedge \ldots \wedge \varepsilon_{i_{r}}) \right).$$
\item Let $\chi = \chi_{1}$ be the trivial character.  Let  \label{deuxxx}
$$r = {\rm ord}_{s=0}L_{K,S}(s,\chi_{1}) = |S| - 1 $$
and let $I = (i_{1},\ldots,i_{r})$ be any element of $\wp_{r}(\Omega)$.  Then, one has
$$R(\chi,\mathcal{A}) = \chi_{1}\left(R_{I,K}\left((\varepsilon_{i_{1}}\varepsilon_{i_{r+1}}^{-1})\wedge \ldots \wedge(\varepsilon_{i_{r}}\varepsilon_{i_{r+1}}^{-1}) \right) \right), $$
where $i_{r+1}$ is the unique index in $\Omega$ that is not in $I$.
\end{enumerate}
\end{Pro}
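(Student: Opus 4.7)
The plan is to exhibit an explicit basis of the isotypical component $\mathbb{C}E_{S}(K) \cdot e_{\bar{\chi}}$ on which $R(\chi,\mathcal{A})$ is defined, and then read off the matrix of $(f_{\mathbb{C}} \circ \lambda_{K,S})^{\bar{\chi}}$ in that basis. Two identities will be used throughout: for $a \in \mathbb{C}[G]$ and $\psi \in \widehat{G}$, one has $a \cdot e_{\psi} = \psi(a)\, e_{\psi}$, and since $\psi \colon \mathbb{C}[G] \to \mathbb{C}$ is a ring homomorphism, it commutes with taking determinants of matrices whose entries lie in $\mathbb{C}[G]$.

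For part (\ref{unnn}), since $\chi \neq \chi_{1}$ the short exact sequence used in the proof of Proposition \ref{unitschi} shows that $f_{\mathbb{C}}$ induces an isomorphism $\mathbb{C}Y_{S}(K) \cdot e_{\bar{\chi}} \stackrel{\simeq}{\longrightarrow} \mathbb{C}E_{S}(K) \cdot e_{\bar{\chi}}$. Proposition \ref{useful} together with the transformation law $e_{\bar{\chi}} \cdot (\sigma \cdot w) = \bar{\chi}(\sigma)\, e_{\bar{\chi}} \cdot w$ forces $\{e_{\bar{\chi}} \cdot w_{i_{t}}\}_{t=1}^{r}$ to be a basis of the domain, hence $\{e_{\bar{\chi}} \cdot \varepsilon_{i_{t}}\}_{t=1}^{r}$ is a basis of $\mathbb{C}E_{S}(K) \cdot e_{\bar{\chi}}$. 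Using Proposition \ref{logdec} together with $\mathbb{C}[G]$-linearity yields
$$(f_{\mathbb{C}} \circ \lambda_{K,S})(e_{\bar{\chi}} \cdot \varepsilon_{i_{t}}) = \sum_{j=1}^{n} \bar{\chi}(\ell_{j,K}(\varepsilon_{i_{t}}))\, e_{\bar{\chi}} \cdot \varepsilon_{j},$$
and the terms with $j \notin I$ vanish by Proposition \ref{unitschi}. The remaining matrix $\bigl(\bar{\chi}(\ell_{i_{s},K}(\varepsilon_{i_{t}}))\bigr)_{s,t}$ has determinant $\bar{\chi}(R_{I,K}(\varepsilon_{i_{1}} \wedge \ldots \wedge \varepsilon_{i_{r}}))$, which is the desired formula.

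For part (\ref{deuxxx}), set $\delta_{t} = \varepsilon_{i_{t}} \varepsilon_{i_{r+1}}^{-1}$. The first step is to show that $\{\delta_{t} \cdot e_{\chi_{1}}\}_{t=1}^{r}$ is a basis of the $r$-dimensional space $\mathbb{C}E_{S}(K) \cdot e_{\chi_{1}}$. Applying $e_{\chi_{1}}$ to $f_{\mathbb{C}}(\alpha) = 0$ produces the relation $\sum_{i=1}^{n} \bigl(n_{v_{i}} |G|/|G_{i}|\bigr)\, e_{\chi_{1}} \cdot \varepsilon_{i} = 0$, unique up to scalar, and the sum of its coefficients equals $s_{K}(\alpha) \neq 0$; this sign/nondegeneracy condition is exactly what is needed to deduce independence of the $\delta_{t} \cdot e_{\chi_{1}}$. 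Repeating the computation from part (\ref{unnn}) gives
$$(f_{\mathbb{C}} \circ \lambda_{K,S})(\delta_{t} \cdot e_{\chi_{1}}) = \sum_{j=1}^{n} \chi_{1}(\ell_{j,K}(\delta_{t}))\, e_{\chi_{1}} \cdot \varepsilon_{j}.$$
Substituting $e_{\chi_{1}} \cdot \varepsilon_{i_{s}} = \delta_{s} \cdot e_{\chi_{1}} + e_{\chi_{1}} \cdot \varepsilon_{i_{r+1}}$ for $s \le r$ and collecting terms, the coefficient of the ``leftover'' vector $e_{\chi_{1}} \cdot \varepsilon_{i_{r+1}}$ equals $\sum_{j=1}^{n} \chi_{1}(\ell_{j,K}(\delta_{t})) = -\sum_{w \in S_{K}} \log |\delta_{t}|_{w}$, which vanishes by the product formula (\ref{prodformula}). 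The matrix of $(f_{\mathbb{C}} \circ \lambda_{K,S})^{\chi_{1}}$ in the basis $\{\delta_{s} \cdot e_{\chi_{1}}\}$ thus collapses to $\bigl(\chi_{1}(\ell_{i_{s},K}(\delta_{t}))\bigr)_{s,t}$, whose determinant is $\chi_{1}(R_{I,K}(\delta_{1} \wedge \ldots \wedge \delta_{r}))$.

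The main obstacle is the bookkeeping in part (\ref{deuxxx}): one must simultaneously identify a workable basis of $\mathbb{C}E_{S}(K) \cdot e_{\chi_{1}}$, invoke $s_{K}(\alpha) \neq 0$ to get independence, and observe that the product formula is exactly what cancels the unwanted contribution from $e_{\chi_{1}} \cdot \varepsilon_{i_{r+1}}$. Part (\ref{unnn}) is essentially a direct unwinding of definitions once the basis is identified, whereas part (\ref{deuxxx}) requires this extra collapse to produce a clean determinantal formula that is manifestly independent of the choice of the distinguished index $i_{r+1}$.
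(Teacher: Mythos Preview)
Your proof is correct and follows the same overall strategy as the paper: identify an explicit $\mathbb{C}$-basis of $\mathbb{C}E_{S}(K)\cdot e_{\bar\chi}$ coming from the Artin system, and then read off the determinant of $(f_{\mathbb{C}}\circ\lambda_{K,S})^{\bar\chi}$ in that basis. The execution differs in two places worth noting.

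First, the paper packages the determinant computation via the top exterior power $\wedge^{r}(f_{\mathbb{C}}\circ\lambda_{K,S})$ acting on a single wedge, invoking Proposition~\ref{isotyp} to kill the irrelevant $R_{J,K}$ terms; you instead compute the matrix of the endomorphism directly and take its determinant. These are of course equivalent, and your formulation makes the role of $\ell_{j,K}$ a bit more transparent.

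Second, and more interestingly, for part~(\ref{deuxxx}) the paper obtains the basis $\{\varepsilon_{i_{s}}\varepsilon_{i_{r+1}}^{-1}\cdot e_{\chi_{1}}\}$ by restricting $f$ to $\mathbb{C}X_{S}(K)\cdot e_{\chi_{1}}$, where injectivity is already in hand from~(\ref{injectiveX}); it then uses implicitly that $\lambda_{K,S}$ lands in $\mathbb{C}X_{S}(K)$, so that $\wedge^{r}\lambda_{K,S}$ automatically takes values in the one-dimensional space $\bigwedge^{r}\mathbb{C}X_{S}(K)\cdot e_{\chi_{1}}$. You instead re-derive linear independence from the single relation supplied by $f(\alpha)=0$ together with $s_{K}(\alpha)\neq 0$, and then kill the ``leftover'' $e_{\chi_{1}}\cdot\varepsilon_{i_{r+1}}$ term by a direct appeal to the product formula. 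Your route is slightly more hands-on and avoids invoking the $X_{S}(K)$ structure, whereas the paper's route is conceptually cleaner but relies on the earlier observation that $f$ restricts to an isomorphism on $\mathbb{C}X_{S}(K)$. Both arguments are short and neither offers a real advantage over the other; they are two sides of the same coin, since $\sum_{j}\chi_{1}(\ell_{j,K}(u))=0$ is exactly the statement that $\lambda_{K,S}(u)\in\mathbb{C}X_{S}(K)$.
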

\begin{proof}
Starting with the exact sequence (\ref{artinles}), one gets the following short exact sequence of $\mathbb{C}[G]$-modules:
$$0 \longrightarrow \mathbb{C} \cdot \alpha \longrightarrow \mathbb{C}Y_{S}(K) \stackrel{f_{\mathbb{C}}}{\longrightarrow} \mathbb{C}E_{S}(K) \longrightarrow 0.$$
Since $G$ acts trivially on $\mathbb{C} \cdot \alpha$, and $\chi \neq \chi_{1}$, one gets an isomorphism of $\mathbb{C}[G]$-modules:
$$f_{\mathbb{C}}^{\overline{\chi}}:\mathbb{C}Y_{S}(K)\cdot e_{\overline{\chi}} \longrightarrow \mathbb{C}E_{S}(K) \cdot e_{\overline{\chi}}. $$
By Proposition \ref{useful}, a $\mathbb{C}$-basis for $\mathbb{C}Y_{S}(K) \cdot e_{\overline{\chi}}$ is given by $\{w_{i_{t}} \cdot e_{\overline{\chi}}\, | \, t = 1, \ldots, r \}$ and therefore, a $\mathbb{C}$-basis for $\mathbb{C}E_{S}(K) \cdot e_{\overline{\chi}}$ is given by $\{\varepsilon_{i_{t}} \cdot e_{\overline{\chi}} \, | \, t=1,\ldots,r \}$.  It follows that a $\mathbb{C}$-basis for the one-dimensional $\mathbb{C}$-vector space $\bigwedge_{\mathbb{C}[G]}^{r}\mathbb{C}E_{S}(K) \cdot e_{\overline{\chi}}$ is given by $\varepsilon_{i_{1}}\wedge \ldots \wedge \varepsilon_{i_{r}} \cdot e_{\overline{\chi}}$.  Using Proposition \ref{isotyp}, we calculate
\begin{equation*}
\begin{aligned}
\wedge^{r}(f_{\mathbb{C}} \circ \lambda_{K,S})(\varepsilon_{i_{1}} \wedge \ldots \wedge \varepsilon_{i_{t}} \cdot e_{\overline{\chi}}) &= \wedge^{r}f_{\mathbb{C}} \left(\sum_{J \in \wp_{r}(\Omega)}R_{J,K}(\varepsilon_{i_{1}}\wedge \ldots \wedge \varepsilon_{i_{r}} \cdot e_{\overline{\chi}}) \cdot w_{J} \right) \\
&=\wedge^{r}f_{\mathbb{C}} \left(R_{I,K}(\varepsilon_{i_{1}}\wedge \ldots \wedge \varepsilon_{i_{r}} \cdot e_{\overline{\chi}}) \cdot w_{I} \right) \\
&= \overline{\chi}(R_{I,K}(\varepsilon_{i_{1}}\wedge \ldots \wedge \varepsilon_{i_{r}})) \cdot \varepsilon_{i_{1}}\wedge \ldots \wedge \varepsilon_{i_{r}} \cdot e_{\overline{\chi}},
\end{aligned}
\end{equation*}
and this shows (\ref{unnn}).  

For (\ref{deuxxx}), we proceed as follows.  Let $I \in \wp_{r}(\Omega)$ and let $\Omega \smallsetminus I = \{i_{r+1}\}$.  Since $\{w_{i} \cdot e_{\chi_{1}} \, | \, i=1,\ldots,|S| \}$ is a $\mathbb{C}$-basis for $\mathbb{C}Y_{S}(K) \cdot e_{\chi_{1}}$, we get that
$$\{(w_{i_{s}} - w_{i_{r+1}}) \cdot e_{\chi_{1}}\, | \, s=1,\ldots,r\} $$
is a $\mathbb{C}$-basis for $\mathbb{C}X_{S}(K) \cdot e_{\chi_{1}}$.  The isomorphism
$$f:\mathbb{C}X_{S}(K) \cdot e_{\chi_{1}} \stackrel{\simeq}{\longrightarrow} \mathbb{C}E_{S}(K) \cdot e_{\chi_{1}}, $$
implies then that $\{\varepsilon_{i_{s}}\cdot \varepsilon_{i_{r+1}}^{-1} \cdot e_{\chi_{1}}\ \, | \, s =1,\ldots,r \}$ is a $\mathbb{C}$-basis for $\mathbb{C}E_{S}(K) \cdot e_{\chi_{1}}$.  It follows that a $\mathbb{C}$-basis for $\bigwedge^{r}_{\mathbb{C}[G]}\mathbb{C}E_{S}(K) \cdot e_{\chi_{1}}$ is given by $(\varepsilon_{i_{1}} \varepsilon_{i_{r+1}}^{-1}) \wedge \ldots \wedge (\varepsilon_{i_{r}} \varepsilon_{i_{r+1}}^{-1}) \cdot e_{\chi_{1}}$.  We calculate
\begin{align*}
&
\wedge^{r} f \circ \lambda_{K,S}\left((\varepsilon_{i_{1}} \varepsilon_{i_{r+1}}^{-1}) \wedge \ldots \wedge (\varepsilon_{i_{r}} \varepsilon_{i_{r+1}}^{-1}) \cdot e_{\chi_{1}} \right)
\\
&\qquad
=
\wedge^{r}f \left(\sum_{J \in \wp_{r}(\Omega)}R_{J,K}\left((\varepsilon_{i_{1}} \varepsilon_{i_{r+1}}^{-1}) \wedge \ldots \wedge (\varepsilon_{i_{r}} \varepsilon_{i_{r+1}}^{-1}) \cdot e_{\chi_{1}}\right)w_{J}   \right)
\\
&\qquad
=
\wedge^{r}f\left(R_{I,K}\left((\varepsilon_{i_{1}} \varepsilon_{i_{r+1}}^{-1}) \wedge \ldots \wedge(\varepsilon_{i_{r}} \varepsilon_{i_{r+1}}^{-1}) \right)(w_{i_{1}} - w_{i_{r+1}})\wedge \ldots \wedge(w_{i_{r}} - w_{i_{r+1}})  \cdot e_{\chi_{1}}\right) 
\\
&\qquad
=
\chi_{1}\left(R_{I,K}\left((\varepsilon_{i_{1}}\varepsilon_{i_{r+1}}^{-1})\wedge \ldots \wedge(\varepsilon_{i_{r}}\varepsilon_{i_{r+1}}^{-1}) \right) \right) \cdot (\varepsilon_{i_{1}} \varepsilon_{i_{r+1}}^{-1}) \wedge \ldots \wedge (\varepsilon_{i_{r}} \varepsilon_{i_{r+1}}^{-1}) \cdot e_{\chi_{1}}. 
\end{align*}
This completes the proof.
\end{proof}
\begin{Rem}
Proposition \ref{concretestarkreg} can be viewed as a generalization of \S $9$, Chapter I of \cite{Tate:1984} (in the abelian setting).
\end{Rem}

\subsection{Stark's conjecture over $\mathbb{Q}$} \label{mainst}
Now that we have decomposed the $S_{K}$-regulator $R_{K,S}$ into $\chi$-components, at least up to a rational number, the hope is that the decomposition (\ref{prod1}) would somehow match the decomposition (\ref{regulatorbroken}) and this is expressed in the following first conjecture of Stark (namely, Conjecture on page $61$ of \cite{Stark:1975} reformulated as Conjecture $5.1$ on page $27$ of \cite{Tate:1984}).  Note that the original conjecture was formulated for a general $S$-truncated Artin $L$-function, whereas we only treat the case where $K/k$ is an abelian extension of number fields.  But there is no loss in generality in doing so for Stark's conjecture over $\mathbb{Q}$ because of Proposition $7.2$ of \cite{Tate:1984}.  (Stark-type conjectures over $\mathbb{Z}$ in the non-abelian setting have only recently been formulated.  See, for instance, \cite{Burns:2011}.)
\begin{Con}[Stark's conjecture over $\mathbb{Q}$]
Let $\mathcal{A}$ be an Artin system of $S_{K}$-units.  For $\chi \in \widehat{G}$, we set
$$A(\chi,\mathcal{A}) = \frac{L_{K,S}^{*}(0,\chi)}{R(\chi,\mathcal{A})}. $$
Then
\begin{enumerate}
\item $A(\chi,\mathcal{A}) \in \overline{\mathbb{Q}}$,
\item $A(\chi,\mathcal{A})^{g} = A(\chi^{g},\mathcal{A})$ for all $g \in {\rm Gal}(\overline{\mathbb{Q}}/\mathbb{Q})$.
\end{enumerate}
\end{Con}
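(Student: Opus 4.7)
The plan is to split the conjecture into a rationality input and a Galois-equivariance input, and to handle them separately with the machinery assembled in \S\ref{artinsystem}--\S\ref{starkreg}. For the rationality input, I would multiply the regulator decomposition~(\ref{regulatorbroken}) by the class-number formula~(\ref{prod1}) to obtain
$$\prod_{\chi \in \widehat{G}} A(\chi,\mathcal{A}) \;=\; \prod_{\chi \in \widehat{G}} \frac{L_{K,S}^{*}(0,\chi)}{R(\chi,\mathcal{A})} \;=\; \mp\,\frac{h_{K,S}}{w_{K}\cdot[E_{S}(K):\mu(K)\cdot \mathcal{U}_{f}]} \in \mathbb{Q}^{\times}.$$
The global product is thus manifestly rational. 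By itself this only constrains the joint behavior of the $A(\chi,\mathcal{A})$, but it becomes decisive once Galois orbits of characters are isolated.

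Next, I would prove part (2) of the conjecture for the Stark regulator alone. Proposition~\ref{concretestarkreg} expresses $R(\chi,\mathcal{A})$ as $\overline{\chi}$ (respectively $\chi_{1}$) applied to an element of $\mathbb{R}[G]$ whose coefficients are $\mathbb{Z}$-polynomial expressions in the logarithms $\log|\varepsilon_{i_{t}}^{\sigma}|_{w_{i_{s}}}$; these real numbers depend only on the Artin system $\mathcal{A}$ and not on $\chi$. Moreover, the singling condition $G_{i_{t}}\subseteq{\rm ker}(\chi)$ that selects the relevant index set $I\in\wp_{r}(\Omega)$ depends only on the kernel of $\chi$, which is preserved by any $g\in {\rm Gal}(\overline{\mathbb{Q}}/\mathbb{Q})$. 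Consequently, applying $g$ to the formula simply replaces the evaluating character $\overline{\chi}$ by $\overline{\chi^{g}}$, yielding $R(\chi,\mathcal{A})^{g}=R(\chi^{g},\mathcal{A})$. If one grants the analogous identity $L_{K,S}^{*}(0,\chi)^{g}=L_{K,S}^{*}(0,\chi^{g})$ on the $L$-side, then the $A(\chi^{g},\mathcal{A})$ are Galois conjugates within a single orbit, and the refinement of the computation above to each orbit (with $\mathbb{Q}$ replaced by a suitable cyclotomic subfield, using the usual character sum identities) forces each $A(\chi,\mathcal{A})$ to lie in $\overline{\mathbb{Q}}$ and to transform as claimed.

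The main obstacle, of course, is the Galois equivariance of the $L$-leading terms at $s=0$ itself; it is essentially equivalent to the algebraicity assertion we are trying to prove and cannot be read off from~(\ref{functeqL}) directly. The classical route, due to Stark and developed systematically in~\cite{Tate:1984}, is to invoke Brauer's induction theorem to write every character of $G$ as a $\mathbb{Z}$-linear combination of characters induced from $1$-dimensional characters of subgroups, then to exploit the inductivity of Artin $L$-functions together with the compatible behavior of Artin systems of $S_{K}$-units and their regulators under restriction/induction along intermediate fields. This reduces the statement to the case where the fixed field of $\ker(\chi)$ is abelian over $\mathbb{Q}$, where rationality of $L_{K,S}^{*}(0,\chi)$ can be transferred from known rationality of $L$-values at negative integers (Siegel--Klingen for totally real fields, more refined statements otherwise) via the functional equation. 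Carrying this reduction through for the non-abelian-over-$\mathbb{Q}$ top fields $K$ that are the computational target of the present paper is, however, beyond currently available techniques, which is precisely the reason the authors turn to systematic numerical verification rather than a general proof.
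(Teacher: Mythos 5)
The statement you were asked to prove is a \emph{conjecture}, not a theorem, and the paper does not prove it: it only restates Stark's conjecture over $\mathbb{Q}$ in the abelian setting, reformulates it (Theorem~\ref{reformulationQ}), and gathers numerical evidence for it. Your final paragraph correctly recognizes this, and your closing diagnosis---that a general proof is out of reach and the authors therefore resort to systematic numerical verification---is exactly right. So there is no ``paper's own proof'' for your attempt to have matched or diverged from.

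A few of your intermediate steps nonetheless deserve comment. The computation showing $\prod_{\chi} A(\chi,\mathcal{A}) \in \mathbb{Q}^{\times}$, obtained by combining~(\ref{prod1}) with~(\ref{regulatorbroken}), is correct and is indeed a classical sanity check. However, your second step is imprecise: $R(\chi,\mathcal{A})$ and $L_{K,S}^{*}(0,\chi)$ are in general \emph{transcendental} complex numbers, so an expression such as $R(\chi,\mathcal{A})^{g}$ for $g \in \mathrm{Gal}(\overline{\mathbb{Q}}/\mathbb{Q})$ is not literally defined, and neither is the ``$L$-side'' identity $L_{K,S}^{*}(0,\chi)^{g}=L_{K,S}^{*}(0,\chi^{g})$ that you would like to grant. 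What you are implicitly doing---applying $g$ only to the algebraic character values $\chi(\sigma)$ while keeping the coefficients $c_{\sigma}$ (logarithms of $S_K$-units) fixed---is a meaningful operation, but the clean way to make it precise is exactly the paper's Theorem~\ref{reformulationQ}: Stark's conjecture over $\mathbb{Q}$ is equivalent to the single rationality statement $\beta_{S}(\mathcal{A}) \in \mathbb{Q}[G]$, which collapses the spurious ``$R$-equivariance plus $L$-equivariance'' decomposition into the rationality of one element of $\mathbb{C}[G]$, since only the quotient $A(\chi,\mathcal{A})$ is even conjecturally algebraic. The Brauer-induction route you sketch is indeed the standard reduction behind the known cases (e.g., rational-valued characters, and $K/\mathbb{Q}$ abelian, treated in Tate's book), and you are correct that it does not reach the non-abelian-over-$\mathbb{Q}$ cubic fields computed in this paper.
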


We shall now rephrase Stark's conjecture over $\mathbb{Q}$ in a slightly different way.  Let us define
$$\beta_{S}(\mathcal{A}) = \sum_{\chi \in \widehat{G}}A(\chi,\mathcal{A}) \cdot e_{\overline{\chi}}. $$
\begin{The} \label{reformulationQ}
Stark's conjecture over $\mathbb{Q}$ for all $\chi \in \widehat{G}$ is equivalent to
$$\beta_{S}(\mathcal{A}) \in \mathbb{Q}[G]. $$
\end{The}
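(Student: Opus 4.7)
The plan is to exhibit $\beta_{S}(\mathcal{A})\in\mathbb{Q}[G]$ as the Galois-invariance incarnation of the two clauses of Stark's conjecture. The key structural fact is that if we let $G_{\mathbb{Q}}=\mathrm{Gal}(\overline{\mathbb{Q}}/\mathbb{Q})$ act on $\overline{\mathbb{Q}}[G]$ coefficient-wise by $g\cdot\sum_{\sigma}c_{\sigma}\sigma=\sum_{\sigma}c_{\sigma}^{g}\sigma$, then $\mathbb{Q}[G]=\overline{\mathbb{Q}}[G]^{G_{\mathbb{Q}}}$. Therefore the whole game is to translate the idempotent expansion of $\beta_{S}(\mathcal{A})$ into a Galois-equivariance statement about the numbers $A(\chi,\mathcal{A})$.

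First I would record the action of $g\in G_{\mathbb{Q}}$ on the primitive idempotents. A direct calculation from the formula $e_{\overline{\chi}}=\frac{1}{|G|}\sum_{\sigma}\overline{\chi}(\sigma)\sigma^{-1}$ gives $g\cdot e_{\overline{\chi}}=e_{\overline{\chi^{g}}}$, where $\chi^{g}$ denotes the character $\sigma\mapsto\chi(\sigma)^{g}$. I would also recall the useful orthogonality $\chi(e_{\overline{\chi'}})=\delta_{\chi,\chi'}$, so that from $\beta_{S}(\mathcal{A})=\sum_{\chi}A(\chi,\mathcal{A})e_{\overline{\chi}}$ one recovers $A(\chi,\mathcal{A})=\chi(\beta_{S}(\mathcal{A}))$.

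Now the two directions become almost mechanical. For ($\Rightarrow$), assume Stark's conjecture holds for every $\chi\in\widehat{G}$. Since each $A(\chi,\mathcal{A})$ lies in $\overline{\mathbb{Q}}$, the element $\beta_{S}(\mathcal{A})$ lies in $\overline{\mathbb{Q}}[G]$. Applying $g\in G_{\mathbb{Q}}$ coefficient-wise and using the idempotent-shifting formula above,
\begin{equation*}
g\cdot\beta_{S}(\mathcal{A})=\sum_{\chi}A(\chi,\mathcal{A})^{g}\,e_{\overline{\chi^{g}}}=\sum_{\chi}A(\chi^{g^{-1}},\mathcal{A})^{g^{-1}\cdot\text{?}}\,e_{\overline{\chi}},
\end{equation*}
which after reindexing $\chi\mapsto\chi^{g^{-1}}$ and using $A(\chi^{g^{-1}},\mathcal{A})^{g}=A((\chi^{g^{-1}})^{g},\mathcal{A})=A(\chi,\mathcal{A})$ returns $\beta_{S}(\mathcal{A})$. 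Hence $\beta_{S}(\mathcal{A})\in\overline{\mathbb{Q}}[G]^{G_{\mathbb{Q}}}=\mathbb{Q}[G]$. For ($\Leftarrow$), assume $\beta_{S}(\mathcal{A})=\sum_{\tau\in G}c_{\tau}\tau$ with $c_{\tau}\in\mathbb{Q}$. Then $A(\chi,\mathcal{A})=\chi(\beta_{S}(\mathcal{A}))=\sum_{\tau}c_{\tau}\chi(\tau)$ is a $\mathbb{Q}$-linear combination of roots of unity, hence lies in $\overline{\mathbb{Q}}$. Moreover
\begin{equation*}
A(\chi,\mathcal{A})^{g}=\sum_{\tau}c_{\tau}\,\chi(\tau)^{g}=\sum_{\tau}c_{\tau}\,\chi^{g}(\tau)=A(\chi^{g},\mathcal{A}),
\end{equation*}
establishing both clauses of the conjecture simultaneously.

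The only subtlety, and where I would be most careful, is making the reindexing in the $(\Rightarrow)$ direction airtight: one must verify the formula $g\cdot e_{\overline{\chi}}=e_{\overline{\chi^{g}}}$ with the correct convention (the action is through the values $\chi(\sigma)$ which are roots of unity), and check that the rewriting of $\sum_{\chi}A(\chi,\mathcal{A})^{g}e_{\overline{\chi^{g}}}$ as $\sum_{\chi}A(\chi^{g^{-1}},\mathcal{A})^{g}e_{\overline{\chi}}$ is consistent with the Galois equivariance hypothesis. Beyond this bookkeeping, there is no genuine analytic or arithmetic input in the equivalence: Theorem \ref{reformulationQ} is purely a repackaging of the classical observation that $\mathbb{Q}[G]$ is the $G_{\mathbb{Q}}$-invariant subring of $\overline{\mathbb{Q}}[G]$ under the natural action that permutes primitive idempotents according to $\chi\mapsto\chi^{g}$.
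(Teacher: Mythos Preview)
Your argument is correct and follows essentially the same route as the paper: both directions hinge on the coefficient-wise $G_{\mathbb{Q}}$-action permuting the idempotents via $e_{\overline{\chi}}\mapsto e_{\overline{\chi^{g}}}$, together with the inversion $A(\chi,\mathcal{A})=\chi(\beta_{S}(\mathcal{A}))$ (which the paper writes out as $A(\chi,\mathcal{A})=\sum_{\sigma}\chi(\sigma)m_{\sigma}$). The only blemish is the stray placeholder ``$g^{-1}\cdot\text{?}$'' in your displayed reindexing---the exponent should simply be $g$, and indeed your prose immediately afterward carries out the correct computation $A(\chi^{g^{-1}},\mathcal{A})^{g}=A(\chi,\mathcal{A})$; clean up that display and the proof is complete.
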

\begin{proof}
  Assume first that Stark's conjecture over $\mathbb{Q}$ is true for all $\chi \in \widehat{G}$.  If $g \in {\rm Gal}(\overline{\mathbb{Q}}/\mathbb{Q})$, we have
\begin{equation*}
  \begin{aligned}
    \beta_{S}(\mathcal{A})^{g} &= \sum_{\chi \in \widehat{G}}A(\chi,\mathcal{A})^{g}\cdot e_{\overline{\chi}^{g}} \\
    &=\sum_{\chi \in \widehat{G}}A(\chi^{g},\mathcal{A}) \cdot e_{\overline{\chi}^{g}} \\
    &= \beta_{S}(\mathcal{A}).
  \end{aligned}
\end{equation*}
Hence, $\beta_{S}(\mathcal{A}) \in \mathbb{Q}[G]$.  Conversely, if we define $m_{\sigma}$ for $\sigma \in G$ via the equation
$$\beta_{S}(\mathcal{A}) = \sum_{\sigma \in G}m_{\sigma} \cdot \sigma^{-1}, $$
then the $m_{\sigma}$ and the $A(\chi,\mathcal{A})$ are related via the formulas
$$m_{\sigma} = \frac{1}{|G|}\sum_{\chi \in \widehat{G}} \overline{\chi(\sigma)} A(\chi,\mathcal{A}) $$
and
$$A(\chi,\mathcal{A}) = \sum_{\sigma \in G}\chi(\sigma) m_{\sigma}. $$
This last equation shows that if $\beta_{S}(\mathcal{A}) \in \mathbb{Q}[G]$, then $A(\chi,\mathcal{A}) \in \overline{\mathbb{Q}}$.  Moreover, if $\beta_{S}(\mathcal{A}) \in \mathbb{Q}[G]$, then $\beta_{S}(\mathcal{A})^{g} = \beta_{S}(\mathcal{A})$ for all $g \in {\rm Gal}(\overline{\mathbb{Q}}/\mathbb{Q})$.  But this last equation can be rewritten as
$$\sum_{\chi \in \widehat{G}}A(\chi^{g^{-1}},\mathcal{A})^{g} \cdot e_{\overline{\chi}} = \sum_{\chi \in \widehat{G}}A(\chi,\mathcal{A}) \cdot e_{\overline{\chi}}, $$
and this shows the desired result.
\end{proof}

\subsection{Popescu's conjecture} \label{popconj}
\emph{In this subsection, $r$ will stand for an integer satisfying $1 \le r \le |S|$}.  Popescu's conjecture concerns the $S$-truncated $L$-functions having minimal order of vanishing only, and is formulated under the following hypothesis.
\begin{Hyp} \label{StarkHhigher}
\hfill
\begin{enumerate}
\item The set $S$ contains $S_{\infty}$ and the places that ramify in $K/k$. \label{unhigher}
\item The set $S$ contains at least $r$ places that split completely in $K/k$, say $v_{1},\ldots,v_{r}$. \label{deuxhigher}
\item The set $S$ satisfies $|S| \ge r+1$. \label{troishigher}
\end{enumerate}
\end{Hyp}
Points $(\ref{deuxhigher})$ and $(\ref{troishigher})$ together with Theorem \ref{orderofvanishing} imply that ${\rm ord}_{s=0}L_{K,S}(s,\chi) \ge r$ for all $\chi \in \widehat{G}$.  From now on, we let
$$\widehat{G}_{r,S} = \{\chi \in \widehat{G}\, | \, \chi \neq \chi_{1} \text{ and } r_{S}(\chi) = r\}.$$
We also define
$$e_{r,S} =\sum_{\chi \in \widehat{G}_{r,S}}e_{\chi} \in \mathbb{Q}[G]. $$
Moreover, we set
\begin{equation*}
\widehat{G}_{r,S}' =
\begin{cases}
\widehat{G}_{r,S}, &\text{if } |S| \ge r+2 \\
\widehat{G}_{r,S} \cup \{\chi_{1} \}, &\text{if } |S| = r+1.
\end{cases}
\end{equation*}
and
\begin{equation*}
  e_{r,S}' = 
  \begin{cases}
    e_{r,S}, &\text{if } |S| \ge r+2 \\
    e_{r,S} + e_{\chi_{1}}, &\text{if } |S| = r+1.
  \end{cases}
\end{equation*}
Note that $e_{r,S}, e_{\chi_{1}}$, and $e_{r,S}' \in \mathbb{Q}[G]$.  Moreover if $S$ satisfies $(\ref{deuxhigher})$ of Hypothesis \ref{StarkHhigher} and $\chi \in \widehat{G}_{r,S}$, then $G_{1},\ldots,G_{r}$, which are trivial in this case, are the unique decomposition groups contained in ${\rm ker}(\chi)$ by Theorem \ref{orderofvanishing}.
\begin{Pro} \label{isohigher}
  \hfill
\begin{enumerate}
\item Assuming that the set $S$ satisfies $(\ref{deuxhigher})$ of Hypothesis \ref{StarkHhigher}, the $\mathbb{C}[G]$-linear morphism
$$R_{I,K}:\bigwedge_{\mathbb{C}[G]}^{r} \mathbb{C}E_{S}(K)\cdot e_{r,S} \longrightarrow \mathbb{C}[G] \cdot e_{r,S}, $$
where $I = (1,2,\ldots,r)$, is an isomorphism of $\mathbb{C}[G]$-modules. 
\item Assuming that $|S| = r+1$, then for any $J \in \wp_{r}(\Omega)$ the map
$$R_{J,K}:\bigwedge_{\mathbb{C}[G]}^{r}\mathbb{C}E_{S}(K) \cdot e_{\chi_{1}} \longrightarrow \mathbb{C}[G] \cdot e_{\chi_{1}} $$
is an isomorphism of $\mathbb{C}[G]$-modules.  Moreover, $R_{J_{1},K} = \pm R_{J_{2},K}$ on $\mathbb{C}E_{S}(K) \cdot e_{\chi_{1}}$ for any $J_{1},J_{2} \in \wp_{r}(\Omega)$.
\end{enumerate}
\end{Pro}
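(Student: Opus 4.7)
My plan is to check both assertions on each isotypical component of the exterior power. On the relevant $e_\chi$-components both the source and the target become one-dimensional over $\mathbb{C}$, so it suffices to exhibit a generator whose image under the map is nonzero; this is precisely what Proposition \ref{concretestarkreg} computes in terms of nonvanishing Stark regulators.

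For Part (1), I would fix $\chi \in \widehat{G}_{r,S}$. Hypothesis \ref{StarkHhigher}$(\ref{deuxhigher})$ gives $G_1 = \ldots = G_r = \{1\} \subseteq {\rm ker}(\chi)$, and Theorem \ref{orderofvanishing} forces these to be the \emph{only} $v \in S$ with $G_v \subseteq {\rm ker}(\chi)$, since $r_S(\chi) = r$. Propositions \ref{useful} and \ref{unitschi}, together with the isomorphism (\ref{Gisom}), then imply that $\mathbb{C}E_S(K) \cdot e_\chi$ has $\mathbb{C}$-dimension $r$ with basis $\{\varepsilon_{w_1} \cdot e_\chi, \ldots, \varepsilon_{w_r} \cdot e_\chi\}$ for any Artin system $\mathcal{A}$ (which exists by Theorem \ref{artinexist}). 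Hence $\bigwedge^r_{\mathbb{C}[G]} \mathbb{C}E_S(K) \cdot e_\chi$ is generated over $\mathbb{C}$ by the single element $\varepsilon_{w_1} \wedge \ldots \wedge \varepsilon_{w_r} \cdot e_\chi$, and $\mathbb{C}[G] \cdot e_\chi$ is likewise one-dimensional. Combining $\mathbb{C}[G]$-linearity of $R_{I,K}$ (and the identity $x\cdot e_\chi = \chi(x) e_\chi$ for $x \in \mathbb{C}[G]$) with Proposition \ref{concretestarkreg}$(\ref{unnn})$ applied to $\overline{\chi}$, this generator is sent to
\[
R_{I,K}(\varepsilon_{w_1} \wedge \ldots \wedge \varepsilon_{w_r} \cdot e_\chi) = R(\overline{\chi}, \mathcal{A}) \cdot e_\chi,
\]
which is nonzero because $R(\overline{\chi}, \mathcal{A}) = {\rm det}((f_\mathbb{C} \circ \lambda_{K,S})^{\chi})$ is the determinant of a composition of $\mathbb{C}[G]$-module isomorphisms (Proposition \ref{reg} together with (\ref{Gisom})). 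Summing over $\chi \in \widehat{G}_{r,S}$ completes Part (1).

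Part (2) should yield to the same strategy at $\chi_1$: when $|S| = r+1$, Theorem \ref{orderofvanishing} gives $r_S(\chi_1) = r$, so $\bigwedge^r_{\mathbb{C}[G]} \mathbb{C}E_S(K) \cdot e_{\chi_1}$ is again one-dimensional with generator the element displayed in Proposition \ref{concretestarkreg}$(\ref{deuxxx})$, and that proposition shows its image under $R_{J,K}$ is $R(\chi_1, \mathcal{A}) \cdot e_{\chi_1} \neq 0$ for any $J \in \wp_r(\Omega)$, hence the isomorphism. For the ``moreover'' clause I would use Proposition \ref{isotypical}$(\ref{d})$ to write $\ell_{i,K}(x) = \ell_{i,k}(x) \cdot N_G$ for $x \in \mathbb{C}E_S(K) \cdot e_{\chi_1}$; the product formula (\ref{prodformula}), together with Proposition \ref{logdec} and the inclusion $\lambda_{K,S}(\mathbb{C}E_S(K)) \subseteq \mathbb{C}X_S(K)$, then yields the relation $\sum_{i=1}^{r+1} \ell_{i,k}(x) = 0$; and elementary row operations on the $(r+1) \times r$ matrix $(\ell_{i,k}(x_t))$ show that the $r \times r$ minors obtained by deleting any single row all agree up to sign. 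The only genuine technical obstacle is the bookkeeping of signs and of the factor $N_G$ (equivalently $|G| e_{\chi_1}$) when passing between different $J$'s; the remainder is a routine assembly of the machinery already set up in \S \ref{theory}.
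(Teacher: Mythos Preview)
Your argument is correct, but it takes a different route from the paper's. The paper establishes the isomorphism by a dimension count together with \emph{injectivity}: for Part~(1), if $R_{I,K}(x\cdot e_{r,S})=0$ then Proposition~\ref{isotyp} forces $R_{J,K}(x\cdot e_{r,S})=0$ for every $J\in\wp_r(\Omega)$, whence $\wedge^r\lambda_{K,S}(x\cdot e_{r,S})=0$ and $x\cdot e_{r,S}=0$ by the injectivity of $\wedge^r\lambda_{K,S}$; Part~(2) is handled the same way after first noting (via the product formula) that $R_{J_1,K}=\pm R_{J_2,K}$ on the $e_{\chi_1}$-component. No Artin system is invoked.

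You instead argue \emph{surjectivity}: you pick an Artin system $\mathcal{A}$, use Propositions~\ref{useful} and~\ref{unitschi} to produce an explicit generator of each one-dimensional piece $\bigwedge^r_{\mathbb{C}[G]}\mathbb{C}E_S(K)\cdot e_\chi$, and then appeal to Proposition~\ref{concretestarkreg} to see that $R_{I,K}$ (or $R_{J,K}$) sends this generator to $R(\overline\chi,\mathcal{A})\cdot e_\chi$ (resp.\ $R(\chi_1,\mathcal{A})\cdot e_{\chi_1}$), which is nonzero because $f_{\mathbb{C}}\circ\lambda_{K,S}$ is an automorphism of $\mathbb{C}E_S(K)$. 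This is a perfectly valid alternative; it trades the intrinsic injectivity argument for the concrete machinery of Artin systems and Stark regulators already developed in \S\ref{starkreg}, and in fact yields a little more (the image of an explicit generator). Your treatment of the ``moreover'' clause via Proposition~\ref{isotypical}(\ref{d}) and column-sum-zero minors is exactly what the paper's ``simple calculation using the product formula'' amounts to; the $N_G$ factor you worry about is harmless since $N_G^r=|G|^{r-1}N_G$ is common to all $J$ and cancels in the comparison.
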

\begin{proof}
  For the first part, note that
  $${\rm dim}_{\mathbb{C}}\left(\bigwedge_{\mathbb{C}[G]}^{r}\mathbb{C}E_{S}(K) \cdot e_{r,S} \right) = |\widehat{G}_{r,S}|= {\rm dim}_{\mathbb{C}} \left(\mathbb{C}[G] \cdot e_{r,S} \right). $$
  It is therefore sufficient to show that $R_{I,K}$ is injective.  But if $R_{I,K}(x \cdot e_{r,S}) = 0$ for some $x \in \bigwedge_{\mathbb{C}[G]}^{r}\mathbb{C}E_{S}(K)$, then
  \begin{equation*}
    \begin{aligned}
      \wedge^{r}\lambda_{K,S}(x \cdot e_{r,S}) &= \sum_{J \in \wp_{r}(\Omega)}R_{J,K}(x \cdot e_{r,S}) \cdot w_{J}\\
      &=R_{I,K}(x \cdot e_{r,S}) \\
      &= 0,
    \end{aligned}
  \end{equation*}
  by Proposition \ref{isotyp}.  Since $\wedge^{r}\lambda_{K,S}$ is injective, we get that $x \cdot e_{r,S} = 0$ as we wanted to show.

  For the second part, a simple calculation using the product formula shows that $R_{J_{1},K}=\pm R_{J_{2},K}$ on $\bigwedge_{\mathbb{C}[G]}^{r}\mathbb{C}E_{S}(K)\cdot e_{\chi_{1}}$ for all $J_{1},J_{2} \in \wp_{r}(\Omega)$.  Now, we have again
  $${\rm dim}_{\mathbb{C}}\left(\bigwedge_{\mathbb{C}[G]}^{r}\mathbb{C}E_{S}(K)\cdot e_{\chi_{1}} \right) = 1 = {\rm dim}_{\mathbb{C}}(\mathbb{C}[G] \cdot e_{\chi_{1}}), $$
  and hence it is sufficient to show that $R_{J,K}$ is injective.  But if $R_{J,K}(x \cdot e_{\chi_{1}}) = 0$ for some $x \in \bigwedge_{\mathbb{C}[G]}^{r}\mathbb{C}E_{S}(K)$, then it follows that $R_{J',K}(x \cdot e_{\chi_{1}}) = 0$ for all $J' \in \wp_{r}(\Omega)$.  Therefore, by Proposition~\ref{isohigher} we have
  $$\wedge^{r}\lambda_{K,S}(x \cdot e_{\chi_{1}}) = \sum_{J' \in \wp_{r}(\Omega)}R_{J',K}(x \cdot e_{\chi_{1}}) \cdot w_{J'} = 0. $$
Since $\wedge^{r}\lambda_{K,S}$ is injective, this ends the proof.
\end{proof}
We now define some evaluators that are the main objects of study regarding Popescu's conjecture.
\begin{Def}
  \hfill
  \begin{enumerate}
  \item Assuming (\ref{deuxhigher}) of Hypothesis \ref{StarkHhigher}, we define the evaluator $\eta \in \bigwedge_{\mathbb{C}[G]}^{r}\mathbb{C}E_{S}(K) \cdot e_{r,S}$ to be the unique element of $\bigwedge_{\mathbb{C}[G]}^{r}\mathbb{C}E_{S}(K) \cdot e_{r,S}$ such that
    $$R_{I,K}(\eta) = \theta_{K,S}^{*}(0) \cdot e_{r,S}, $$
    where $I = (1,2,\ldots,r)$.
  \item Assuming that $|S|=r+1$, for $J \in \wp_{r}(\Omega)$, we define the evaluator $\delta_{J}$ to be the unique element of $\bigwedge_{\mathbb{C}[G]}^{r}\mathbb{C}E_{S}(K) \cdot e_{\chi_{1}}$ satisfying
    $$R_{J,K}(\delta_{J}) = \theta_{K,S}^{*}(0)\cdot e_{\chi_{1}}. $$
  \item Assuming (\ref{deuxhigher}) and (\ref{troishigher}) of Hypothesis \ref{StarkHhigher}, we let
    \begin{equation*}
      \eta' = 
      \begin{cases}
        \eta, &\text{if } |S| \ge r+2,\\
        \eta + \delta_{I}, &\text{if } |S| = r+1,
      \end{cases}
    \end{equation*}
    where $I = (1,2,\ldots,r)$.
  \end{enumerate}
\end{Def}
The uniqueness of these evaluators follow from Proposition \ref{isohigher}.  Moreover, $\eta'$ is the unique element of $\bigwedge_{\mathbb{C}[G]}^{r}\mathbb{C}E_{S}(K) \cdot e_{r,S}'$ satisfying
$$R_{I,K}(\eta') = \theta_{K,S}^{*}(0) \cdot e_{r,S}'. $$
The following proposition turns out to be important for us.
\begin{Pro} \label{formulaartin}
With the notation as above, if $S$ satisfies (\ref{deuxhigher}) and (\ref{troishigher}) of Hypothesis \ref{StarkHhigher}, and if $\mathcal{A} = \{\varepsilon_{w} \, | \, w \in S_{K} \}$ is an Artin system of $S_{K}$-units, then
\begin{equation*}
\eta' = 
\begin{cases}
\beta_{S}(\mathcal{A}) \cdot e_{r,S} \cdot \varepsilon_{1} \wedge \ldots \wedge \varepsilon_{r}, &\text{if } |S| \ge r+2 \\
\beta_{S}(\mathcal{A}) \cdot e_{r,S}' \cdot (\varepsilon_{1}\varepsilon_{r+1}^{-1}) \wedge \ldots \wedge(\varepsilon_{r}\varepsilon_{r+1}^{-1}), &\text{if } |S| = r+1.
\end{cases} 
\end{equation*}
\end{Pro}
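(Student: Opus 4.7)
The plan is to reduce everything to applying $R_{I,K}$ with $I=(1,\dots,r)$ to the candidate formula and invoking the uniqueness of the evaluators provided by Proposition \ref{isohigher}. Because $v_{1},\dots,v_{r}$ split completely in $K/k$ by Hypothesis \ref{StarkHhigher}(\ref{deuxhigher}), the decomposition groups $G_{1},\dots,G_{r}$ are trivial and hence automatically contained in $\mathrm{ker}(\chi)$ for every $\chi\in\widehat{G}$. Combined with Theorem \ref{orderofvanishing}, this forces $I=(1,\dots,r)$ to be the unique $r$-tuple attached to any $\chi\in\widehat{G}_{r,S}$ in Proposition \ref{concretestarkreg}(\ref{unnn}); consequently $R(\chi,\mathcal{A})=\overline{\chi}\bigl(R_{I,K}(\varepsilon_{1}\wedge\cdots\wedge\varepsilon_{r})\bigr)$ for all such $\chi$.

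Consider first the case $|S|\ge r+2$, in which $\eta'=\eta$. Apply the $\mathbb{C}[G]$-linear map $R_{I,K}$ to the proposed expression $\beta_{S}(\mathcal{A})\cdot e_{r,S}\cdot \varepsilon_{1}\wedge\cdots\wedge\varepsilon_{r}$. Using the identity $e_{\overline{\chi}}\cdot x=\overline{\chi}(x)e_{\overline{\chi}}$ in $\mathbb{C}[G]$ together with the definition of $\beta_{S}(\mathcal{A})$, one computes
\begin{equation*}
R_{I,K}\bigl(\beta_{S}(\mathcal{A})\cdot e_{r,S}\cdot \varepsilon_{1}\wedge\cdots\wedge\varepsilon_{r}\bigr)
=\sum_{\chi\in\widehat{G}_{r,S}}A(\chi,\mathcal{A})\,R(\chi,\mathcal{A})\,e_{\overline{\chi}}
=\theta_{K,S}^{*}(0)\cdot e_{r,S},
\end{equation*}
where in the last step I use the definition $A(\chi,\mathcal{A})=L_{K,S}^{*}(0,\chi)/R(\chi,\mathcal{A})$ and re-index. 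Since $\beta_{S}(\mathcal{A})\cdot e_{r,S}\cdot \varepsilon_{1}\wedge\cdots\wedge\varepsilon_{r}$ visibly lies in $\bigwedge^{r}_{\mathbb{C}[G]}\mathbb{C}E_{S}(K)\cdot e_{r,S}$, the uniqueness part of Proposition \ref{isohigher}(1) yields the asserted formula for $\eta'=\eta$.

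For the case $|S|=r+1$, it remains to treat $\delta_{I}$, where $I=(1,\dots,r)$ so that the distinguished index is $i_{r+1}=r+1$. By the same linearity argument, applying $R_{I,K}$ to $\beta_{S}(\mathcal{A})\cdot e_{\chi_{1}}\cdot(\varepsilon_{1}\varepsilon_{r+1}^{-1})\wedge\cdots\wedge(\varepsilon_{r}\varepsilon_{r+1}^{-1})$ produces $A(\chi_{1},\mathcal{A})\,R(\chi_{1},\mathcal{A})\,e_{\chi_{1}}=\theta_{K,S}^{*}(0)\cdot e_{\chi_{1}}$, this time using Proposition \ref{concretestarkreg}(\ref{deuxxx}) for the evaluation of $R(\chi_{1},\mathcal{A})$. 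Uniqueness (Proposition \ref{isohigher}(2)) then identifies this element with $\delta_{I}$, giving $\eta'=\eta+\delta_{I}$ in the desired form with the two summands carrying different wedges.

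To match the uniform expression $\beta_{S}(\mathcal{A})\cdot e_{r,S}'\cdot(\varepsilon_{1}\varepsilon_{r+1}^{-1})\wedge\cdots\wedge(\varepsilon_{r}\varepsilon_{r+1}^{-1})$ stated in the proposition, the key last observation is that $e_{\chi}\cdot \varepsilon_{r+1}=0$ in $\mathbb{C}E_{S}(K)$ for every $\chi\in\widehat{G}_{r,S}$: indeed $G_{r+1}\not\subseteq\mathrm{ker}(\chi)$ because $G_{1},\dots,G_{r}$ already account for the $r$ decomposition groups in $\mathrm{ker}(\chi)$ forced by Theorem \ref{orderofvanishing}, so Proposition \ref{unitschi}(2) applies. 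Expanding the wedge $(\varepsilon_{1}\varepsilon_{r+1}^{-1})\wedge\cdots\wedge(\varepsilon_{r}\varepsilon_{r+1}^{-1})$ (additively in $\mathbb{C}E_{S}(K)$) and killing every term containing $\varepsilon_{r+1}$ after multiplication by $e_{r,S}$ shows $e_{r,S}\cdot(\varepsilon_{1}\varepsilon_{r+1}^{-1})\wedge\cdots\wedge(\varepsilon_{r}\varepsilon_{r+1}^{-1})=e_{r,S}\cdot \varepsilon_{1}\wedge\cdots\wedge \varepsilon_{r}$. Combining this with the formula for $\delta_{I}$ above via $e_{r,S}'=e_{r,S}+e_{\chi_{1}}$ completes the verification. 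The only real subtlety is this last bookkeeping step translating between the two forms of the wedge; the rest is a direct application of the uniqueness of evaluators together with Proposition \ref{concretestarkreg}.
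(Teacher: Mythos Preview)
Your proof is correct and uses the same ingredients as the paper's: Proposition~\ref{concretestarkreg}, Proposition~\ref{unitschi}, and the uniqueness coming from Proposition~\ref{isohigher}. The only difference is organizational: the paper treats the case $|S|=r+1$ in one stroke by computing $\theta_{K,S}^{*}(0)\cdot e_{r,S}'$ directly against the single wedge $(\varepsilon_{1}\varepsilon_{r+1}^{-1})\wedge\cdots\wedge(\varepsilon_{r}\varepsilon_{r+1}^{-1})$ (invoking Proposition~\ref{unitschi} implicitly to reconcile with Proposition~\ref{concretestarkreg}(\ref{unnn})), whereas you identify $\eta$ and $\delta_{I}$ separately on two different wedges and then merge them at the end via $e_{r,S}\cdot\varepsilon_{r+1}=0$. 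Both routes amount to the same computation.
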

\begin{proof}
  Let $\mathcal{A} = \{\varepsilon_{w}\, | \, w \in S_{K} \}$ be an Artin system of $S_{K}$-units.  Assuming first that $|S| = r+1$, and using Propositions \ref{unitschi} and \ref{concretestarkreg}, we calculate
  \begin{equation*}
    \begin{aligned}
      \theta_{K,S}^{*}(0) \cdot e_{r,S}' &= \sum_{\chi \in \widehat{G}_{r,S}} L_{K,S}^{*}(0,\chi) \cdot e_{\overline{\chi}} + L_{K,S}^{*}(0,\chi_{1}) \cdot e_{\chi_{1}} \\
      &= \sum_{\chi \in \widehat{G}_{r,S}}A(\chi,\mathcal{A}) R(\chi,\mathcal{A}) \cdot e_{\overline{\chi}} + A(\chi_{1},\mathcal{A})R(\chi_{1},\mathcal{A}) \cdot e_{\chi_{1}} \\
      &= R_{I,K}\left((\varepsilon_{1}\varepsilon_{r+1}^{-1})\wedge \ldots \wedge(\varepsilon_{r}\varepsilon_{r+1}^{-1}) \right)\beta_{S}(\mathcal{A}) \cdot e_{r,S}'.
    \end{aligned}
  \end{equation*}
  It follows that
  \begin{equation} 
    \eta' = \beta_{S}(\mathcal{A})\cdot e_{r,S}' \cdot (\varepsilon_{1}\varepsilon_{r+1}^{-1}) \wedge \ldots \wedge(\varepsilon_{r}\varepsilon_{r+1}^{-1}).
    \end{equation}
If $|S| > r+1$, the calculation is similar and left to the reader.
\end{proof}
As a corollary, we obtain:
\begin{Cor} \label{overQ}
  If Stark's conjecture over $\mathbb{Q}$ is true, then
  $$\eta' \in \mathbb{Q}\bigwedge_{\mathbb{Z}[G]}^{r}E_{S}(K) \simeq \bigwedge_{\mathbb{Q}[G]}^{r}\mathbb{Q}E_{S}(K). $$
\end{Cor}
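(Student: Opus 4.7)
The plan is to read off the statement directly from the explicit formula for $\eta'$ provided by Proposition \ref{formulaartin}, after fixing an Artin system and invoking the equivalent formulation of Stark's conjecture over $\mathbb{Q}$ given by Theorem \ref{reformulationQ}.

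First I would invoke Theorem \ref{artinexist} to fix an Artin system of $S_K$-units $\mathcal{A} = \{\varepsilon_w \mid w \in S_K\}$, so that Proposition \ref{formulaartin} applies and expresses $\eta'$ as
$$\eta' = \beta_S(\mathcal{A}) \cdot e_{r,S}'' \cdot \xi,$$
where $e_{r,S}'' \in \{e_{r,S}, e_{r,S}'\}$ depending on whether $|S| \ge r+2$ or $|S| = r+1$, and $\xi$ is either the pure wedge $\varepsilon_1 \wedge \cdots \wedge \varepsilon_r$ or $(\varepsilon_1\varepsilon_{r+1}^{-1}) \wedge \cdots \wedge (\varepsilon_r\varepsilon_{r+1}^{-1})$. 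The three factors will be handled separately.

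Next I would verify that each factor lies in the rational structure. The wedge $\xi$ is manifestly an element of $\bigwedge^r_{\mathbb{Z}[G]} E_S(K)$, since its components are genuine $S_K$-units. The idempotents $e_{r,S}$ and $e_{r,S}'$ both lie in $\mathbb{Q}[G]$; this was observed just after their definitions, since $\widehat{G}_{r,S}$ is stable under the natural $\operatorname{Gal}(\overline{\mathbb{Q}}/\mathbb{Q})$-action on characters (characters in the same Galois orbit have the same order of vanishing, by Theorem \ref{orderofvanishing}), so $e_{r,S} = \sum_{\chi \in \widehat{G}_{r,S}} e_\chi$ is $\operatorname{Gal}(\overline{\mathbb{Q}}/\mathbb{Q})$-invariant and hence rational, and similarly for $e_{r,S}'$. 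Finally, by Theorem \ref{reformulationQ}, Stark's conjecture over $\mathbb{Q}$ for every $\chi \in \widehat{G}$ is equivalent to $\beta_S(\mathcal{A}) \in \mathbb{Q}[G]$.

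Putting these three observations together, $\eta'$ is the image of $\xi \in \bigwedge^r_{\mathbb{Z}[G]} E_S(K)$ under multiplication by the rational group-ring element $\beta_S(\mathcal{A}) \cdot e_{r,S}'' \in \mathbb{Q}[G]$, so $\eta' \in \mathbb{Q} \bigwedge^r_{\mathbb{Z}[G]} E_S(K)$ as desired. There is no real obstacle here; the content is packaged into the formula of Proposition \ref{formulaartin} and the reformulation of Theorem \ref{reformulationQ}, and the only subtlety to flag is the rationality of the idempotent $e_{r,S}''$, which follows from Galois stability of the set of characters having a prescribed order of vanishing.
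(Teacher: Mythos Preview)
Your proposal is correct and follows essentially the same approach as the paper: invoke Proposition \ref{formulaartin} for the explicit formula, Theorem \ref{reformulationQ} for $\beta_S(\mathcal{A}) \in \mathbb{Q}[G]$, and the rationality of $e_{r,S}$ (resp.\ $e_{r,S}'$). The paper's proof is a one-line citation of these three ingredients, so your version simply spells out the same argument in more detail, including the extra remark on Galois stability of $\widehat{G}_{r,S}$ to justify $e_{r,S}' \in \mathbb{Q}[G]$.
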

\begin{proof}
This follows from Proposition \ref{formulaartin}, Theorem \ref{reformulationQ}, and the fact that $e_{r,S}' \in \mathbb{Q}[G]$.
\end{proof}
\begin{Rem}
Corollary \ref{overQ} is well-known, but has never been spelled out explicitly in terms of an Artin system of $S_{K}$-units.
See for instance Proposition $2.3$ of \cite{Rubin:1996}.
\end{Rem}

If $M$ is a $\mathbb{Z}[G]$-module, then we let $M^{*} = {\rm Hom}_{\mathbb{Z}[G]}(M,\mathbb{Z}[G])$; that is, $M^{*}$ is the dual of $M$ in the category of $\mathbb{Z}[G]$-modules.

If $\varphi \in M^{*}$, then for any integer $r \ge 1$ it induces a $\mathbb{Z}[G]$-module morphism
$$\tilde{\varphi}:\bigwedge_{\mathbb{Z}[G]}^{r}M \longrightarrow \bigwedge_{\mathbb{Z}[G]}^{r-1}M, $$
defined by
$$m_{1}\wedge \ldots \wedge m_{r} \mapsto \sum_{i=1}^{r}(-1)^{i+1}\varphi(m_{i})m_{1} \wedge \ldots \wedge m_{i-1} \wedge m_{i+1} \wedge \ldots \wedge m_{r}. $$
If $\varphi_{1},\ldots,\varphi_{k} \in M^{*}$, then iterating this process gives a $\mathbb{Z}[G]$-module morphism
$$ \bigwedge_{\mathbb{Z}[G]}^{k}M^{*} \longrightarrow {\rm Hom}_{\mathbb{Z}[G]}\left(\bigwedge_{\mathbb{Z}[G]}^{r}M,\bigwedge_{\mathbb{Z}[G]}^{r-k}M \right),$$
defined by $\varphi_{1} \wedge \ldots \wedge \varphi_{k} \mapsto \tilde{\varphi}_{k} \circ \ldots \circ \tilde{\varphi}_{1}$.  When $k=r-1$, we obtain a map
\begin{equation*}
  \bigwedge_{\mathbb{Z}[G]}^{r}M^{*} \longrightarrow {\rm Hom}_{\mathbb{Z}[G]}\left(\bigwedge_{\mathbb{Z}[G]}^{r}M,M \right)
  \,.
\end{equation*}
If $M$ is a $\mathbb{Z}[G]$-module, then we shall denote the natural map $M \longrightarrow \mathbb{Q}M$ by $m \mapsto \widetilde{m}$.  Moreover, we let
$$E_{S}(K)^{ab} = \{u \in E_{S}(K) \, | \, K(u^{1/w_{K}})/k \text{ is abelian} \}. $$
One can check that $E_{S}(K)^{ab}$ is a $\mathbb{Z}[G]$-submodule of $E_{S}(K)$.  In \cite{Popescu:2002}, Popescu defines the following lattice:
\begin{Def}
  With notation as above, we set
  $$\Lambda_{K,S}^{ab} = \left\{x \in \mathbb{Q}\bigwedge_{\mathbb{Z}[G]}^{r}E_{S}(K) \, \Big| \, \varphi_{1}\wedge\ldots\wedge\varphi_{r-1}(x) \in \widetilde{E_{S}(K)^{ab}}, \text{ for all } \varphi_{1},\ldots,\varphi_{r-1} \in E_{S}(K)^{*} \right\}. $$
\end{Def}
Moreover, he states the following conjecture:
\begin{Con}[Popescu] \label{popconj1}
  Assuming that Hypothesis \ref{StarkHhigher} is satisfied, one has
  $$w_{K} \cdot \eta' \in \Lambda_{K,S}^{ab}. $$
\end{Con}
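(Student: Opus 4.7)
The natural starting point is Proposition \ref{formulaartin}, which gives a closed formula for $\eta'$ in terms of any Artin system $\mathcal{A} = \{\varepsilon_{w} \mid w \in S_K\}$ and the element $\beta_S(\mathcal{A}) \in \mathbb{C}[G]$. So I would first reduce the conjecture to showing that, for each Artin system $\mathcal{A}$, the product $w_K \cdot \beta_S(\mathcal{A}) \cdot e_{r,S}' \cdot \xi_{\mathcal{A}}$ lies in $\Lambda_{K,S}^{ab}$, where $\xi_{\mathcal{A}}$ denotes the relevant wedge of $r$ units built from $\mathcal{A}$ (either $\varepsilon_{1} \wedge \cdots \wedge \varepsilon_{r}$ or $(\varepsilon_{1}\varepsilon_{r+1}^{-1}) \wedge \cdots \wedge (\varepsilon_{r}\varepsilon_{r+1}^{-1})$, depending on $|S|$). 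Since we have complete freedom in choosing $\mathcal{A}$, I would try to pick one for which the coefficients of $\beta_S(\mathcal{A})$ are as simple as possible, to simplify the later integrality bookkeeping.

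By Corollary \ref{overQ}, Stark's conjecture over $\mathbb{Q}$ (reformulated via Theorem \ref{reformulationQ}) already guarantees $\eta' \in \mathbb{Q} \bigwedge_{\mathbb{Z}[G]}^{r} E_S(K)$, so rationality is free. The next step is to unpack the definition of $\Lambda_{K,S}^{ab}$: one must check that for arbitrary $\varphi_1, \ldots, \varphi_{r-1} \in E_S(K)^{*}$, the image $\widetilde{\varphi_1 \wedge \cdots \wedge \varphi_{r-1}}(w_K \cdot \eta')$ lies in $\widetilde{E_S(K)^{ab}}$. Using the formula from Proposition \ref{formulaartin}, each such image expands as an explicit $\mathbb{Z}[G]$-linear combination of the Artin units $\varepsilon_{w_i}$, weighted by the rational coefficients of $w_K \cdot \beta_S(\mathcal{A})$ and by the values $\varphi_j(\varepsilon_{w_i}) \in \mathbb{Z}[G]$. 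The conjecture thus splits into two sub-claims: (i) all denominators are killed by the $w_K$ factor, so the resulting combinations are genuine $S_K$-units, and (ii) each such $u$ satisfies the abelianness condition that $K(u^{1/w_K})/k$ is abelian.

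The main obstacle is precisely sub-claims (i) and (ii), neither of which follows from Stark over $\mathbb{Q}$ alone. Rationality of $\beta_S(\mathcal{A})$ provides no denominator bound, so (i) would require either a strong form of Burns's conjecture to control the coefficients $m_\sigma$, or an independent identification of these combinations with classical Brumer–Stark elements whose integrality is already known. Condition (ii) is the genuinely new content beyond $\mathbb{Z}[G]$-rationality and is reminiscent of the abelianness clause in Stark's rank-one conjecture; the natural theoretical route is to deduce the whole statement from the equivariant Tamagawa number conjecture for $(h^0(\mathrm{Spec}(K)), \mathbb{Z}[G])$, via the implication mentioned in the introduction (see \cite{Burns:2007, Burns/Kurihara/Sano:2016}). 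In the totally real abelian cubic setting considered here, this is known only when $K/\mathbb{Q}$ is itself abelian, and outside that range a direct proof is out of reach—hence the paper's pivot to numerical verification, using Proposition \ref{formulaartin} to compute $\eta'$ explicitly and then check both integrality and abelianness case by case.
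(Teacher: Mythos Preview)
The statement in question is a \emph{conjecture}, not a theorem; the paper offers no proof of it. Your proposal correctly recognizes this by the end, and your analysis of the obstructions (no denominator bound from Stark over $\mathbb{Q}$ alone, and the genuinely new abelianness clause) is accurate. The reduction you sketch via Proposition~\ref{formulaartin} and the two sub-claims (i) integrality and (ii) abelianness is exactly the framework the paper uses for its numerical verification in \S\ref{numerical}, so in that sense your outline matches the paper's approach to \emph{testing} the conjecture rather than proving it.

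One clarification: your suggestion to ``pick an Artin system $\mathcal{A}$ for which the coefficients of $\beta_S(\mathcal{A})$ are as simple as possible'' is not something the paper pursues or claims is possible in any useful sense; the paper simply computes $\beta_S(\mathcal{A})$ for the Artin system produced by the algorithm of Theorem~\ref{artinexist} and works with whatever denominators arise. Also, invoking Burns's conjecture to control denominators would be circular here, since Burns's conjecture is itself one of the open statements being tested. Otherwise your discussion is sound as an explanation of why the conjecture remains open and how one verifies it computationally.
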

\begin{Rem}
When $r=1$, one recovers Stark's abelian rank one conjecture (Conjecture $1$ of \cite{Stark:1980} or Conjecture $2.1$ on page $89$ of \cite{Tate:1984}), since $\Lambda_{K,S}^{ab} = \widetilde{E_{S}(K)^{ab}}$.  That is, if $K/k$ is a finite abelian extension of number fields such that Hypothesis \ref{StarkHhigher} is satisfied for $r=1$, then there exists an $S_{K}$-unit $\varepsilon_{0} \in E_{S}(K)$ satisfying
\begin{enumerate}
\item $e_{1,S}' \cdot \widetilde{\varepsilon_{0}} = \widetilde{\varepsilon_{0}}$ in $\mathbb{Q}E_{S}(K)$, 
\item $L_{K,S}^{*}(0,\chi) = - \frac{1}{w_{K}} \sum_{\sigma \in G}\chi(\sigma)\log|\varepsilon_{0}^{\sigma}|_{w_{1}}$ for all $\chi \in \widehat{G}_{1,S}'$, 
\item $K(\varepsilon_{0}^{1/w_{K}})/k$ is a finite abelian extension of number fields.  
\end{enumerate}
Such an $S_{K}$-unit is called a Stark unit and is unique up to a root of unity.  If necessary, see \S $3.8$ of \cite{Vallieres:2011} for a comparison between the various slightly different formulations of Stark's abelian rank one conjecture that one can find in the literature.
\end{Rem}
\begin{Rem}
If $|S| \ge r+2$, and $\mathcal{A} = \{\varepsilon_{w}\, | \, w \in S_{K} \}$ is an Artin system of $S_{K}$-units, then Proposition \ref{formulaartin} gives
$$\eta' = \eta = \beta_{S}(\mathcal{A}) \cdot e_{r,S} \cdot \varepsilon_{1}\wedge \ldots \wedge \varepsilon_{r}. $$
  Hence, Popescu's conjecture predicts that
  $$w_{K} \cdot \beta_{S}(\mathcal{A}) \cdot e_{r,S} \cdot \varepsilon_{1}\wedge \ldots \wedge \varepsilon_{r} \in \Lambda_{K,S}^{ab}. $$
  Note that if $\varphi_{1},\ldots,\varphi_{r-1} \in E_{S}(K)^{*}$, then
  $$\varphi_{1}\wedge \ldots \wedge \varphi_{r-1}(\varepsilon_{1} \wedge \ldots \wedge \varepsilon_{r}) \in E_{S}(K). $$
  Assuming Stark's conjecture over $\mathbb{Q}$, one expects
  $$w_{K} \cdot \beta_{S}(\mathcal{A}) \cdot e_{r,S} \in \mathbb{Q}[G]. $$
  Therefore, Stark's conjecture over $\mathbb{Q}$ and Popescu's conjecture together predict that for all $\varphi_{1},\ldots,\varphi_{r-1} \in E_{S}(K)^{*}$, there exists an $S_{K}$-unit $\varepsilon \in E_{S}(K)^{ab}$ (which depends on $\varphi_{1},\ldots,\varphi_{r-1}$ and $\mathcal{A}$) such that in $\mathbb{Q}E_{S}(K)$ one has
  $$w_{K} \cdot \beta_{S}(\mathcal{A}) \cdot e_{r,S} \cdot \varphi_{1}\wedge \ldots \wedge \varphi_{r-1}(\varepsilon_{1} \wedge \ldots \wedge \varepsilon_{r}) = \widetilde{\varepsilon}. $$
If $|S| = r+1$, one has a similar prediction, but with a slightly different formula for $\eta'$ as explained in Proposition \ref{formulaartin}.  This observation can be used to perform numerical verifications of Popescu's conjecture.  We explain this in more detail in \S \ref{numerical} below.
\end{Rem}
\begin{Rem} \label{generatordual}
Starting with the short exact sequence of $\mathbb{Z}[G]$-modules
$$1 \longrightarrow \mu(K) \longrightarrow E_{S}(K) \longrightarrow \widetilde{E_{S}(K)} \longrightarrow 1,$$
and applying the functor ${\rm Hom}_{\mathbb{Z}[G]}(\, \cdot \,,\mathbb{Z}[G])$, one gets an isomorphism of abelian groups
\begin{equation} \label{iso1}
  {\rm Hom}_{\mathbb{Z}[G]}(\widetilde{E_{S}(K)},\mathbb{Z}[G]) \stackrel{\simeq}{\longrightarrow} {\rm Hom}_{\mathbb{Z}[G]}(E_{S}(K),\mathbb{Z}[G]),
\end{equation}
since $\mathbb{Z}[G]$ is $\mathbb{Z}$-free and $\mu(K)$ is finite.  In the sequel, we will identify elements of $E_{S}(K)^{*}$ with elements of $\widetilde{E_{S}(K)}^{*}$ using this isomorphism.

Furthermore, we remind the reader that given a $\mathbb{Z}[G]$-module $M$, one has an isomorphism of abelian groups
\begin{equation} \label{iso2}
  {\rm Hom}_{\mathbb{Z}}(M,\mathbb{Z}) \stackrel{\simeq}{\longrightarrow} {\rm Hom}_{\mathbb{Z}[G]}(M,\mathbb{Z}[G])
\end{equation}
given by $f \mapsto \hat{f}$, where
$$\hat{f}(m) = \sum_{\sigma \in G} f(\sigma^{-1} \cdot m)\cdot \sigma. $$

Starting with a set of fundamental $S_{K}$-units $\eta_{1},\ldots,\eta_{t}$ for $E_{S}(K)$, we can consider the $\widetilde{\eta_{i}}^{*} \in {\rm Hom}_{\mathbb{Z}}(\widetilde{E_{S}(K)},\mathbb{Z})$ defined by
$$\widetilde{\eta_{i}}^{*}(\widetilde{\eta_{j}}) = \delta_{ij}, $$
where $\delta_{ij}$ is the Kronecker symbol.  Using the isomorphisms (\ref{iso1}) and (\ref{iso2}) above, one finds that
$$\Sigma =\{\widehat{\widetilde{\eta_{i}}^{*}} \, | \, i=1,\ldots,t \} $$
is a generating set for $E_{S}(K)^{*}$.  Therefore,
$$\Lambda_{K,S}^{ab} = \left\{x \in \mathbb{Q}\bigwedge_{\mathbb{Z}[G]}^{r}E_{S}(K) \, \Big| \, \varphi_{1}\wedge\ldots\wedge\varphi_{r-1}(x) \in \widetilde{E_{S}(K)^{ab}}, \text{ for all } \varphi_{1},\ldots,\varphi_{r-1} \in \Sigma \right\}.$$
Using this last remark, one can check that a given element $x \in \mathbb{Q}\bigwedge_{\mathbb{Z}[G]}^{r}E_{S}(K)$ lies in $\Lambda_{K,S}^{ab}$ in finitely many steps.
\end{Rem}

\subsection{Burns's conjecture} \label{burnsconj}
\emph{In this subsection, $r$ will stand for an integer satisfying $1 \le r \le |S|$}.  Burns's conjecture is formulated under the same hypotheses as Popescu's conjecture, namely Hypothesis \ref{StarkHhigher}.  We let
\begin{equation*}
  S_{r} = \{v_{1},\ldots,v_{r} \}.
\end{equation*}
We now specialize Conjecture $4.4.1$ of \cite{Burns:2011} to the abelian setting and to an $S$-situation rather than a $T$-modified version.
\begin{Con}[Burns] \label{burnscon}
With the same notation as above, for every $\phi \in {\rm Hom}_{\mathbb{Z}[G]}(E_{S}(K),X_{S}(K))$ one has
$$w_{K} \cdot \theta_{K,S}^{*}(0) \cdot e_{r,S}' \cdot {\rm det}_{\mathbb{C}[G]}(\lambda_{K,S}^{-1} \circ \phi_{\mathbb{C}}) \in \mathbb{Z}[G].$$
Moreover,
\begin{enumerate}
  \item One has
    $$w_{K} \cdot  \theta_{K,S}^{*}(0) \cdot e_{r,S}' \cdot {\rm det}_{\mathbb{C}[G]}(\lambda_{K,S}^{-1} \circ \phi_{\mathbb{C}}) \in {\rm Ann}_{\mathbb{Z}[G]}(Cl_{S}(K)), $$
  \item If $S'$ is any finite set of places of $k$ satisfying $S_{\infty} \cup S_{r} \subseteq S' \subseteq S$, then for any \label{burns_two}
    $$b \in \bigcup_{v \in S \smallsetminus S'}{\rm Ann}_{\mathbb{Z}[G]}\left(\mathbb{Z}[G/G_{v}] \right), $$  
    one has
    $$b \cdot w_{K} \cdot  \theta_{K,S}^{*}(0) \cdot e_{r,S}' \cdot {\rm det}_{\mathbb{C}[G]}(\lambda_{K,S}^{-1} \circ \phi_{\mathbb{C}}) \in {\rm Ann}_{\mathbb{Z}[G]}(Cl_{S'}(K)). $$
\end{enumerate}
\end{Con}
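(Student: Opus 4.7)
The plan is to deduce Burns's conjecture from the equivariant Tamagawa number conjecture (ETNC) for the pair $(h^{0}({\rm Spec}(K)),\mathbb{Z}[G])$, following the approach of Theorem $4.1.1$ of \cite{Burns:2011}. The strategy is to interpret $\theta_{K,S}^{*}(0)$ via a determinant trivialization of a perfect complex $C^{\bullet}$ of $\mathbb{Z}[G]$-modules that computes the compactly supported cohomology of $\mathbb{G}_{m}$ on ${\rm Spec}(O_{S}(K))$; in this framework, the quantity $w_{K} \cdot \theta_{K,S}^{*}(0) \cdot e_{r,S}' \cdot {\rm det}_{\mathbb{C}[G]}(\lambda_{K,S}^{-1}\circ \phi_{\mathbb{C}})$ becomes the image of an integral basis of ${\rm det}_{\mathbb{Z}[G]}(C^{\bullet})$ under pairing with $\phi$, which will simultaneously deliver integrality in $\mathbb{Z}[G]$ and annihilation of the cohomology groups of $C^{\bullet}$.

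The first step is rationality. Combining Proposition~\ref{formulaartin} with Theorem~\ref{reformulationQ}, Stark's conjecture over $\mathbb{Q}$ gives $\theta_{K,S}^{*}(0) \cdot e_{r,S}' \in \mathbb{Q}[G]$. Since $\phi$ is a $\mathbb{Z}[G]$-morphism and $\lambda_{K,S}$ induces the $\mathbb{C}[G]$-isomorphism~(\ref{Gisom}), a componentwise analysis analogous to~(\ref{regulatorbroken}) shows that the transcendental factor ${\rm det}_{\mathbb{C}[G]}(\lambda_{K,S}^{-1}\circ \phi_{\mathbb{C}})$ combines with $\theta_{K,S}^{*}(0) \cdot e_{r,S}'$ to land in $\mathbb{Q}[G]$, since the ``regulator part'' of the former cancels the transcendence of the latter on each isotypical component.

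Second, the refinement from $\mathbb{Q}[G]$ to $\mathbb{Z}[G]$ is the crux. Here one invokes ETNC, which predicts that, after choosing the trivialization supplied by $\lambda_{K,S}$, the leading term $\theta_{K,S}^{*}(0)$ is a $\mathbb{Z}[G]$-basis of ${\rm det}_{\mathbb{Z}[G]}(C^{\bullet})$. Applying the determinant of $\phi$ (viewed as a morphism between suitable perfect complexes) then produces an element of $\mathbb{Z}[G]$ by the determinant functor. The factor $w_{K}$ arises because the torsion submodule $\mu(K) \subseteq E_{S}(K)$ must be trivialized before one can pass to the determinant formalism, and the idempotent $e_{r,S}'$ cuts to the components on which the complex has the expected rank so that the determinant construction is well-posed. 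Annihilation of $Cl_{S}(K)$ in part~(1) then follows automatically, because $Cl_{S}(K)$ appears as a cohomology group of $C^{\bullet}$ and any $\mathbb{Z}[G]$-element produced through a determinant trivialization must annihilate every cohomology group of the complex.

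Finally, for part~(\ref{burns_two}) one propagates the annihilation from $S$ to $S'$ using the standard exact sequence
\begin{equation*}
  \bigoplus_{v \in S \smallsetminus S'} \mathbb{Z}[G/G_{v}] \longrightarrow Cl_{S'}(K) \longrightarrow Cl_{S}(K) \longrightarrow 0,
\end{equation*}
in which the left-hand map sends a generator of the $v$-summand to the class of a prime of $K$ above $v$. An annihilator $\alpha$ of $Cl_{S}(K)$ carries every class in $Cl_{S'}(K)$ into the image of this map, so multiplying $\alpha$ by any $b \in \bigcup_{v \in S \smallsetminus S'}{\rm Ann}_{\mathbb{Z}[G]}(\mathbb{Z}[G/G_{v}])$ kills the contribution of the relevant summand, giving the desired annihilation of $Cl_{S'}(K)$ (with the refined features coming from the full ETNC package). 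The main obstacle is the integrality step — which is essentially equivalent to the relevant case of ETNC and is presently known only under restrictive hypotheses such as $K$ being abelian over $\mathbb{Q}$, which is precisely why the setting of this paper is open and warrants numerical verification.
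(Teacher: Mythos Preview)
The statement you are attempting to prove is a \emph{conjecture}, not a theorem: the paper does not prove it, and indeed states it as open in the setting under consideration. The paper's purpose in \S\ref{burnsconj} is to formulate Conjecture~\ref{burnscon} (specialized from \cite{Burns:2011}) and then, in \S\ref{numerical}, to gather numerical evidence for the special case recorded as Conjecture~\ref{burnsquestion}. There is therefore no ``paper's own proof'' to compare against.

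Your proposal is not a proof but a conditional reduction: you derive Burns's conjecture from the equivariant Tamagawa number conjecture for $(h^{0}({\rm Spec}(K)),\mathbb{Z}[G])$. This is exactly the implication the paper already cites (Theorem~4.1.1 of \cite{Burns:2011}), and you yourself concede in your final paragraph that the integrality step ``is essentially equivalent to the relevant case of ETNC and is presently known only under restrictive hypotheses.'' So what you have written is a heuristic outline of Burns's own argument that ETNC $\Rightarrow$ Conjecture~\ref{burnscon}, not an unconditional proof. Since ETNC is open for the extensions $K/k$ treated in this paper (except when $K/\mathbb{Q}$ is abelian), invoking it does not establish the conjecture.

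Even as a sketch of the ETNC implication, a couple of steps are imprecise. The assertion that ``any $\mathbb{Z}[G]$-element produced through a determinant trivialization must annihilate every cohomology group of the complex'' is not automatic; one needs the machinery of Fitting ideals or the specific arguments in \cite{Burns:2011} relating determinant generators to annihilators. And in your treatment of part~(\ref{burns_two}), an element $b$ lying in ${\rm Ann}_{\mathbb{Z}[G]}(\mathbb{Z}[G/G_{v}])$ for a \emph{single} $v\in S\smallsetminus S'$ need not kill the image of the entire sum $\bigoplus_{v}\mathbb{Z}[G/G_{v}]$, so the exact-sequence argument as stated does not close; the actual argument requires more care (or different hypotheses on $b$, or the full strength of the complex-theoretic machinery).
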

\begin{Rem}
If $r=0$, then 
$$\theta_{K,S}^{*}(0) \cdot e_{r,S}' \cdot {\rm det}_{\mathbb{C}[G]}(\lambda_{K,S}^{-1} \circ \phi_{\mathbb{C}}) = \theta_{K,S}(0), $$
and Brumer's classical conjecture on annihilation of class groups predicts that 
$$\theta_{K,S}(0) \in {\rm Ann}_{\mathbb{Z}[G]}(Cl(K)). $$
\end{Rem}
\begin{Rem}
Note that since $X_{S}(K)$ is $\mathbb{Z}$-free, we have an isomorphism of abelian groups
$${\rm Hom}_{\mathbb{Z}[G]}(\widetilde{E_{S}(K)},X_{S}(K)) \simeq {\rm Hom}_{\mathbb{Z}[G]}(E_{S}(K),X_{S}(K)), $$
so from now on, we will identify these two abelian groups.
\end{Rem}
\begin{Rem} If we start with an Artin system of $S_{K}$-units $\{\varepsilon_{w} \, | \, w \in S_{K} \}$ with induced $\mathbb{Z}[G]$-morphism
$$f:Y_{S}(K) \longrightarrow E_{S}(K),$$
then it induces an injective morphism of $\mathbb{Z}[G]$-modules
$$f:X_{S}(K) \hookrightarrow E_{S}(K). $$
Therefore, we get an isomorphism of $\mathbb{Q}[G]$-modules
$$f_{\mathbb{Q}}:\mathbb{Q}X_{S}(K) \stackrel{\simeq}{\longrightarrow} \mathbb{Q}E_{S}(K). $$
Letting
$$m = [E_{S}(K):\mu(K) \cdot f(X_{S}(K))], $$
it is simple to check that the inverse map $f_{\mathbb{Q}}^{-1}:\mathbb{Q}E_{S}(K) \longrightarrow \mathbb{Q}X_{S}(K)$ induces a morphism
$$m \cdot f_{\mathbb{Q}}^{-1}:\widetilde{E_{S}(K)} \longrightarrow X_{S}(K). $$
Letting $\phi = m \cdot f_{\mathbb{Q}}^{-1}$, one has
\begin{equation*}
\begin{aligned}
\theta_{K,S}^{*}(0) \cdot e_{r,S}' \cdot {\rm det}_{\mathbb{C}[G]}(\lambda_{K,S}^{-1} \circ \phi_{\mathbb{C}}) &= \frac{\theta_{K,S}^{*}(0)}{{\rm det}_{\mathbb{C}[G]}(\phi_{\mathbb{C}}^{-1} \circ \lambda_{K,S})} \cdot e_{r,S}' \\
&= m^{r} \frac{\theta_{K,S}^{*}(0)}{{\rm det}_{\mathbb{C}[G]}(f_{\mathbb{C}} \circ \lambda_{K,S})} \cdot e_{r,S}'\\
&= m^{r} \cdot \beta_{S}(\mathcal{A}) \cdot e_{r,S}'.
\end{aligned}
\end{equation*}
Therefore, a particular case of Burns's conjecture could be phrased as follows:
\begin{Con}[Burns] \label{burnsquestion}
  Let $K/k$ be a finite abelian extension of number fields with Galois group $G$ and let $S$ be a finite set of places of $k$ satisfying Hypothesis \ref{StarkHhigher}.  Given an Artin system of $S_{K}$-units $\mathcal{A}$, one has
$$w_{K} \cdot m^{r} \cdot \beta_{S}(\mathcal{A}) \cdot e_{r,S}' \in \mathbb{Z}[G].$$
Moreover,
\begin{enumerate}
\item One has \label{un_11}
$$w_{K} \cdot m^{r} \cdot \beta_{S}(\mathcal{A}) \cdot e_{r,S}' \in {\rm Ann}_{\mathbb{Z}[G]}(Cl_{S}(K)).$$
\item If $S'$ is any finite set of places of $k$ satisfying $S_{\infty} \cup S_{r} \subseteq S' \subseteq S$, then for any
$$b \in \bigcup_{v \in S \smallsetminus S'}{\rm Ann}_{\mathbb{Z}[G]}\left(\mathbb{Z}[G/G_{v}] \right), $$
one has
$$b \cdot w_{K} \cdot m^{r} \cdot \beta_{S}(\mathcal{A}) \cdot e_{r,S}' \in {\rm Ann}_{\mathbb{Z}[G]}(Cl_{S'}(K)). $$
\end{enumerate}

\end{Con}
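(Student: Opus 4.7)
The plan is to derive Conjecture \ref{burnsquestion} as the specialization of Conjecture \ref{burnscon} obtained by plugging in a carefully chosen homomorphism $\phi \in {\rm Hom}_{\mathbb{Z}[G]}(E_{S}(K), X_{S}(K))$ built from the Artin system $\mathcal{A}$. Concretely, I would start with the $\mathbb{Z}[G]$-morphism $f:Y_{S}(K) \to E_{S}(K)$ attached to $\mathcal{A}$ and use the injectivity of its restriction $f : X_{S}(K) \hookrightarrow E_{S}(K)$ established in (\ref{injectiveX}). Tensoring with $\mathbb{Q}$ makes $f_{\mathbb{Q}}$ an isomorphism of $\mathbb{Q}[G]$-modules, and its inverse $f_{\mathbb{Q}}^{-1}$ is multiplication-by-$1/m$ in the sense that $m \cdot f_{\mathbb{Q}}^{-1}$ carries $\widetilde{E_{S}(K)}$ into $X_{S}(K)$, where $m = [E_{S}(K):\mu(K) \cdot f(X_{S}(K))]$; this follows because $m$ is precisely the exponent controlling the cokernel modulo torsion. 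This gives the candidate $\phi = m \cdot f_{\mathbb{Q}}^{-1}$.

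Next, I would compute $\det_{\mathbb{C}[G]}(\lambda_{K,S}^{-1} \circ \phi_{\mathbb{C}}) \cdot e_{r,S}'$. Multiplicativity of the equivariant determinant and $\phi_{\mathbb{C}} = m \cdot f_{\mathbb{C}}^{-1}$ give
\begin{equation*}
\det_{\mathbb{C}[G]}(\lambda_{K,S}^{-1} \circ \phi_{\mathbb{C}}) \cdot e_{r,S}' = \frac{m^{r}}{\det_{\mathbb{C}[G]}(f_{\mathbb{C}} \circ \lambda_{K,S})} \cdot e_{r,S}',
\end{equation*}
where the exponent $r$ is precisely the $\mathbb{C}$-dimension of each isotypical component selected by $e_{r,S}'$, namely those components with $\chi \in \widehat{G}_{r,S}$ (dimension $r_{S}(\chi) = r$) together with, when $|S| = r+1$, the trivial component (dimension $|S|-1 = r$). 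On each such component, multiplying the map by $m$ scales the scalar determinant by $m^{r}$.

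Third, I would match this expression against $\beta_{S}(\mathcal{A})$ using the $\chi$-by-$\chi$ decomposition of Proposition \ref{reg} and equation (\ref{prodreg}): the $\overline{\chi}$-component of $\theta_{K,S}^{*}(0)/\det_{\mathbb{C}[G]}(f_{\mathbb{C}} \circ \lambda_{K,S})$ is exactly $L_{K,S}^{*}(0,\chi)/R(\chi,\mathcal{A}) = A(\chi,\mathcal{A})$, yielding the key identity
\begin{equation*}
\theta_{K,S}^{*}(0) \cdot e_{r,S}' \cdot \det_{\mathbb{C}[G]}(\lambda_{K,S}^{-1} \circ \phi_{\mathbb{C}}) = m^{r} \cdot \beta_{S}(\mathcal{A}) \cdot e_{r,S}'.
\end{equation*}
The three assertions of Conjecture \ref{burnsquestion} then follow immediately by invoking Conjecture \ref{burnscon} for this particular $\phi$. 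The main subtlety is bookkeeping with the equivariant determinant $\det_{\mathbb{C}[G]}$: one must confirm that on the image of $e_{r,S}'$ the relevant $\mathbb{C}[G]$-module is free of the appropriate rank so that scaling contributes the factor $m^{r}$ uniformly, and that the definition of $\beta_{S}(\mathcal{A})$ via idempotents $e_{\overline{\chi}}$ aligns correctly with the equivariant quotient of $\theta_{K,S}^{*}(0)$ by $\det_{\mathbb{C}[G]}(f_{\mathbb{C}} \circ \lambda_{K,S})$.
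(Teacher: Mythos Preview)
Your proposal is correct and follows essentially the same route as the paper: the paper's Remark preceding Conjecture \ref{burnsquestion} constructs exactly the map $\phi = m \cdot f_{\mathbb{Q}}^{-1}$ from the Artin system, computes $\theta_{K,S}^{*}(0) \cdot e_{r,S}' \cdot {\rm det}_{\mathbb{C}[G]}(\lambda_{K,S}^{-1} \circ \phi_{\mathbb{C}}) = m^{r} \cdot \beta_{S}(\mathcal{A}) \cdot e_{r,S}'$, and reads off Conjecture \ref{burnsquestion} as the specialization of Conjecture \ref{burnscon} at this $\phi$. Your explanation of why the scaling factor is $m^{r}$ (via the dimension of the isotypical components picked out by $e_{r,S}'$) is a bit more explicit than what the paper writes, but the argument is the same.
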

\end{Rem}

\subsection{A simple example} \label{example}
In this section, we study in detail a simple example in the order of vanishing one situation.  Specifically, we take $k = \mathbb{Q}$ and $K = \mathbb{Q}(\sqrt{10})$, and we let $G = \langle \sigma \rangle$.  Note that $h_{K}=2$ and we set 
$$S = \{v_{1},v_{2},v_{3} \} =\{\infty,2,5 \}. $$ 
The primes $2$ and $5$ are ramified in $K/\mathbb{Q}$ and we let $\mathfrak{p}_{2}$ and $\mathfrak{p}_{5}$ be the prime ideals of $K$ that satisfy
$$(2) = \mathfrak{p}_{2}^{2} \text{ and } (5) = \mathfrak{p}_{5}^{2}. $$
One has $\mathfrak{p}_{2} = (2,\sqrt{10})$ and $\mathfrak{p}_{5} = (5,\sqrt{10})$.  Also,
$$(\sqrt{10}) = \mathfrak{p}_{2} \cdot \mathfrak{p}_{5}. $$
It follows that $h_{K,S} = 1$.  We list the places of $S_{K}$ as follows:
$$\{w_{1},w_{1}',w_{2},w_{3} \}, $$
where $w_{1}$ corresponds to the real embedding $\sqrt{10} \mapsto \sqrt{10}$, $w_{1}'$ to the real embedding $\sqrt{10} \mapsto - \sqrt{10}$, $w_{2}$ to the prime ideal $\mathfrak{p}_{2}$, and $w_{3}$ to the prime ideal $\mathfrak{p}_{5}$.  From now on, we let
$$u = 3 + \sqrt{10}. $$
Note that $u$ is a fundamental unit for $E(K)$.  We have $\widehat{G} = \{\chi_{1},\chi \}$, where $\chi$ is the unique non-trivial character of $K/\mathbb{Q}$.  Note that
$$r_{S}(\chi) = 1 \text{ and } r_{S}(\chi_{1}) = 2. $$

A simple calculation using formulas (\ref{classnumberformula}) and (\ref{prodformulaL}) of \S \ref{lfunctions} shows that
$$L_{K,S}^{*}(0,\chi) = h_{K} \cdot R_{K} = 2 \cdot \log|u|_{w_{1}} $$
and 
$$L_{K,S}^{*}(0,\chi_{1}) = -\frac{h_{\mathbb{Q},S} \cdot R_{\mathbb{Q},S}}{w_{\mathbb{Q}}} = -\frac{1}{2}R_{\mathbb{Q},S} = -\frac{1}{2}\log(2) \log(5). $$
Moreover a Stark unit for the data $(K/\mathbb{Q},S,v_{1},w_{1})$ is given by
$$\varepsilon_{0} = u^{-2}, $$
that is
$$L'_{K,S}(0,\psi) = -\frac{1}{2} \sum_{\rho \in G}\psi(\rho)\cdot\log|\varepsilon_{0}^{\rho}|_{w_{1}}, $$
for all $\psi \in \widehat{G}$.  (For details, see for instance Proposition $3.13$ of \cite{Vallieres:2011}.)

From the calculations above follow that a fundamental system of $S_{K}$-units for $E_{S}(K)$ is given by
$$\{3 + \sqrt{10},2,\sqrt{10} \} = \{u,2,\sqrt{10} \}. $$
Following the proof of Theorem \ref{artinexist}, one finds the $S_{K}$-units
\begin{enumerate}
\item $\beta_{1} = (3 + \sqrt{10}) \cdot \sqrt{10} = 3\sqrt{10} + 10$,
\item $\beta_{2} = 2^{-2}  \cdot \sqrt{10} = \sqrt{10}/4$,
\item $\beta_{3} = 2  \cdot \sqrt{10}^{-1} = 2/\sqrt{10}$,
\end{enumerate}
that satisfy for $i=1,2,3$,  $|\beta_{i}|_{w_{i}}>1$ and $|\beta_{i}|_{w}<1$ for all $w \neq w_{i}$.  These $S_{K}$-units lead to the Artin system of $S_{K}$-units $\mathcal{A} = \{\varepsilon_{w} \, | \, w \in S_{K} \}$ where
\begin{enumerate}
\item $\varepsilon_{w_{1}} = 190 + 60 \sqrt{10}$,
\item $\varepsilon_{w_{1}'} = 190 - 60 \sqrt{10}$,
\item $\varepsilon_{w_{2}} = 25/64$,
\item $\varepsilon_{w_{3}} = 16/625$.
\end{enumerate}
Note that $\varepsilon_{w}^{\sigma} = \varepsilon_{w^{\sigma}}$ for all $w \in S_{K}$, and
$$\prod_{w \in S_{K}}\varepsilon_{w} = 1. $$
Hence, the kernel of the map $f:Y_{S}(K) \longrightarrow E_{S}(K)$ defined by $w \mapsto \varepsilon_{w}$ is $\mathbb{Z} \cdot \alpha$, where
$$\alpha = \sum_{v \in S}T_{v}(K). $$
To simplify the notation, we let (as we have done throughout) $\varepsilon_{i} = \varepsilon_{w_{i}}$ for $i=1,2,3$.  Now, using Proposition \ref{concretestarkreg}, we calculate
$$R(\chi,\mathcal{A}) = \log \Big|\frac{\varepsilon_{1}^{\sigma}}{\varepsilon_{1}} \Big|_{w_{1}}, $$
and
\begin{equation*}
R(\chi_{1},\mathcal{A}) = {\rm det}
\begin{pmatrix}
\log|N_{K/\mathbb{Q}}(\varepsilon_{1}\varepsilon_{3}^{-1})|_{v_{1}} & \log|N_{K/\mathbb{Q}}(\varepsilon_{2}\varepsilon_{3}^{-1})|_{v_{1}} \\
\log|N_{K/\mathbb{Q}}(\varepsilon_{1}\varepsilon_{3}^{-1})|_{v_{2}} & \log|N_{K/\mathbb{Q}}(\varepsilon_{2}\varepsilon_{3}^{-1})|_{v_{2}}
\end{pmatrix}.
\end{equation*}
Using the fact that $\varepsilon_{1} = 10u^{2}$, a simple calculation shows that 
$$\frac{L_{K,S}^{*}(0,\chi)}{R(\chi,\mathcal{A})} = -\frac{1}{2} $$
and
$$\frac{L_{K,S}^{*}(0,\chi_{1})}{R(\chi_{1},\mathcal{A})} = -\frac{1}{256}. $$
Therefore
$$\beta_{S}(\mathcal{A}) = \frac{1}{512} \cdot (-129 + 127 \sigma) \in \mathbb{Q}[G] $$
as predicted by Stark's conjecture over $\mathbb{Q}$.  (See Theorem \ref{reformulationQ}.)  
Note that
$$e_{1,S}' = e_{1,S} = \frac{1}{2}\left(1 - \sigma \right) \in \mathbb{Q}[G], $$
and thus
$$\beta_{S}(\mathcal{A}) \cdot e_{1,S} = -\frac{1}{4}(1-\sigma) \in \mathbb{Q}[G]. $$
Proposition \ref{formulaartin} then shows that
$$\eta' =  \eta = \beta_{S}(\mathcal{A}) \cdot e_{1,S} \cdot \varepsilon_{1}. $$
Now, Stark's abelian rank one conjecture, namely Conjecture \ref{popconj1} when $r=1$, predicts that
$$\widetilde{\varepsilon_{0}} = 2 \cdot \beta_{S}(\mathcal{A}) \cdot e_{1,S} \cdot \varepsilon_{1} $$
in $\mathbb{Q}E_{S}(K)$.  In other words, we should have 
$$\varepsilon_{0}^{2} = \varepsilon_{1}^{\sigma -1} $$
in $E_{S}(K)$ up to a root of unity in $K$ (that is $\pm 1$).  But this is indeed the case as a simple calculation shows.

We calculate furthermore
$$m = [E_{S}(K):\mu(K) \cdot f(X_{S}(K))] = 256. $$
Hence, we have
$$w_{K} \cdot m \cdot \beta_{S}(\mathcal{A}) \cdot e_{1,S} \in \mathbb{Z}[G] $$
as predicted by Conjecture \ref{burnsquestion}.  The annihilation part of Conjecture \ref{burnsquestion} is obviously satisfied, since $h_{K,S} = 1$, $h_{K} = 2$ and $w_{K} \cdot m \cdot \beta_{S}(\mathcal{A}) \cdot e_{1,S}  \in 2 \cdot \mathbb{Z}[G]$.

\section{Numerical calculations} \label{numerical} 

\subsection{The algorithm} \label{algorithm}
Let $k$ be a real quadratic field, and let $K$ be a cubic extension of $k$ that is totally real and such that $K/k$ is ramified.  We let $S$ be the set of places of $k$ consisting of the two archimedean places and the finite primes that ramify in $K/k$.  Hence, we always have $|S| \ge 3$.  We let
$$S = \{v_{1},v_{2},\ldots,v_{n}\}, $$
where we agree that $v_{1}$ and $v_{2}$ are the two archimedean places.  Note that $v_{1}$ and $v_{2}$ split completely in $K/k$, since $K$ is assumed to be totally real.  We now explain how to numerically verify Stark's conjecture over $\mathbb{Q}$, the rank two Popescu conjecture, and Burns's conjecture in this particular case.  All the calculations have been done with the software PARI (\cite{PARI}).
\begin{Step}
We calculate a fundamental system of $S_{K}$-units for $E_{S}(K)$, say $\{\eta_{1},\ldots,\eta_{t} \}$.
\end{Step}
\begin{Step}
For each $v_{i} \in S$ ($i=1,\ldots,n$), we choose a place $w_{i}$ lying above $v_{i}$.
\end{Step}
\begin{Step}
We calculate an Artin system of $S_{K}$-units $\mathcal{A} = \{\varepsilon_{w} \, | \, w \in S_{K} \}$.  Here, we follow the proof of Theorem \ref{artinexist} and the main step is to find $S_{K}$-units $\beta_{i}$ that satisfy $|\beta_{i}|_{w_{i}} > 1$ and $|\beta_{i}|_{w}< 1$ for all $w \in S_{K}$ satisfying $w \neq w_{i}$.  In order to find these $S_{K}$-units, we proceed as follows.  We consider the matrix
$$A = (\log|\eta_{j}|_{w}) \in M_{t+1,t}(\mathbb{R}), $$
where $t = |S_{K}| - 1$, and for $s \in \{1,\ldots,t+1 \}$, we let $A_{s}$ be the matrix obtained from $A$ by removing the $s$th row.  The matrices $A_{s}$ are $t \times t$ square matrices.  Furthermore, we let
$$\omega = (-1,\ldots,-1) \in M_{1,t}(\mathbb{R}). $$
Now, if we want to find $\beta_{i}$ then we look at $A_{s}$, where $s$ corresponds to the row involving the place $w_{i}$ and we calculate
$$x = A_{s}^{-1} \cdot \omega^{t}. $$
We then round off the coordinates of $x^{t}$ to the nearest integer in order to get a vector $y = (y_{1},\ldots,y_{n}) \in M_{1,t}(\mathbb{Z})$ and we set
$$\beta_{i} = \prod_{\ell=1}^{t}\eta_{\ell}^{y_{\ell}} \in E_{S}(K). $$
We check that $\beta_{i}$ satisfies $|\beta_{i}|_{w} < 1$ for all $w \neq w_{i}$.  If not, we repeat the process above with $n_{0} \cdot \omega$ where $n_{0}$ is a positive integer and we keep increasing $n_{0}$ until we find a $\beta_{i}$ with the desired properties.  The last condition $|\beta_{i}|_{w_{i}} > 1$ is automatically satisfied by the product formula (\ref{prodformula}). 
\end{Step}
\begin{Step}
Using Proposition \ref{concretestarkreg} and the PARI command \emph{bnrL1}, we calculate 
$$\beta_{S}(\mathcal{A}) \cdot e_{2,S}' = \sum_{\chi \in \widehat{G}_{2,S}'} A(\chi,\mathcal{A}) \cdot e_{\overline{\chi}} $$
to a high precision.
\end{Step}
\begin{Step}
Since $e_{2,S}' \in \mathbb{Q}[G]$, Stark's conjecture over $\mathbb{Q}$ via Theorem \ref{reformulationQ} predicts that
$$\beta_{S}(\mathcal{A}) \cdot e_{2,S}' = \sum_{\sigma \in G}b_{\sigma} \cdot \sigma \in \mathbb{Q}[G]. $$
Using the PARI command \emph{algdep}, we recognize the numbers $b_{\sigma}$ as rational numbers.
\end{Step}

\begin{Step}
We find the smallest positive integer $d$ such that
$$d \cdot \beta_{S}(\mathcal{A}) \cdot e_{2,S}' \in \mathbb{Z}[G]. $$
\end{Step}
\begin{Step}
We calculate 
$$m = [E_{S}(K):\mu(K) \cdot f(X_{S}(K))]. $$
If Conjecture \ref{burnsquestion} had a positive answer, then one would have $d \, | \, 2 m^{2}$ (since $w_{K} = 2$).  In fact, in all the examples that we computed, we observed numerically that $d \, | \, 2m$.  
\end{Step}
\begin{Step}
As explained in Remark \ref{generatordual}, we calculate for $i=1,\ldots,t$ the morphisms $\widehat{\widetilde{\eta_{i}}^{*}}$.
\end{Step}

\begin{Step}
For $i=1,\ldots,t$, we calculate $u_{i} \in E_{S}(K)$ where
\begin{equation*}
\widetilde{u_{i}} = 
\begin{cases}
\widehat{\widetilde{\eta_{i}}^{*}}(\varepsilon_{1}\wedge\varepsilon_{2}), &\text{if } |S|\ge 4 \\
\widehat{\widetilde{\eta_{i}}^{*}}((\varepsilon_{1}\varepsilon_{3}^{-1})\wedge(\varepsilon_{2}\varepsilon_{3}^{-1})), &\text{if } |S| = 3.
\end{cases}
\end{equation*}
Using Proposition \ref{formulaartin}, Popescu's conjecture is true if and only if 
$$w_{K} \cdot \beta_{S}(\mathcal{A}) \cdot e_{2,S}' \cdot \widetilde{u_{i}} \in \widetilde{E_{S}(K)^{ab}}$$
for all $i=1,\ldots,t$.  (Here $w_{K} = 2$, since $K$ is totally real.)  We can check this as follows.  First, we calculate the $S_{K}$-units $\gamma_{i}$ satisfying
$$\widetilde{\gamma_{i}} = 2 \cdot d \cdot \beta_{S}(\mathcal{A}) \cdot e_{2,S}' \cdot \widetilde{u_{i}}.$$
\end{Step}
\begin{Step}
Then, we find $S_{K}$-units $\delta_{i}$ such that we have $\widetilde{\gamma_{i}} = \widetilde{\delta_{i}^{d}}$.  These $S_{K}$-units satisfy
$$\widetilde{\delta_{i}} = 2 \cdot  \beta_{S}(\mathcal{A}) \cdot e_{2,S}' \cdot \widetilde{u_{i}}. $$
\end{Step}

\begin{Step}
Finally, we check that the extension $K(\delta_{i}^{1/2})/k$ is abelian, for $i=1,\ldots,t$.  In order to do so, we use the following well-known lemma:
\begin{Lem}
With the setup as above, let $\lambda \in K^{\times}$ and let $\sigma$ be a generator for $G$.  Then $K(\lambda^{1/2})/k$ is abelian if and only if $\lambda^{\sigma - 1} \in (K^{\times})^{2}$.
\end{Lem}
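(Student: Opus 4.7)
The plan is to write $M = K(\sqrt{\lambda})$ and reduce to showing \emph{abelian $\Leftrightarrow$ condition} by separating this into \emph{(a) $M/k$ is Galois} and \emph{(b) its Galois group is abelian}. The case $\lambda \in (K^{\times})^{2}$ is trivial (then $M = K$ is cyclic cubic over $k$ and the stated condition holds tautologically), so one may assume $[M:K] = 2$.

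The key algebraic input is a simple characterization of squares in $M$ that lie in $K$: since $[M:K]=2$, squaring $a + b\sqrt{\lambda}$ with $a,b \in K$ lands in $K$ only when $ab = 0$, so
\[
(M^{\times})^{2} \cap K^{\times} \;=\; (K^{\times})^{2} \,\cup\, \lambda \cdot (K^{\times})^{2}.
\]
This is the only ingredient beyond routine manipulation.

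For the forward direction, if $M/k$ is abelian then it is Galois, and any lift $\tilde{\sigma} \in \mathrm{Gal}(M/k)$ of $\sigma$ carries $\sqrt{\lambda}$ to an element of $M$ whose square is $\lambda^{\sigma}$. Hence $\lambda^{\sigma} \in (M^{\times})^{2} \cap K^{\times}$, so either $\lambda^{\sigma}$ or $\lambda^{\sigma-1}$ is a square in $K$. The first alternative, applying $\sigma^{2}$ and using $\sigma^{3} = 1$, forces $\lambda$ itself to be a square, contradicting $[M:K] = 2$. Therefore $\lambda^{\sigma-1} \in (K^{\times})^{2}$.

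For the reverse direction, assume $\lambda^{\sigma-1} = \mu^{2}$ with $\mu \in K^{\times}$. The assignment $\tilde{\sigma}(\sqrt{\lambda}) = \mu\sqrt{\lambda}$ defines an extension of $\sigma$ to an automorphism of $M$ (its iterates have the form $\nu \cdot \sqrt{\lambda}$ for $\nu \in K^{\times}$, and one checks $\tilde{\sigma}^{3} = \mathrm{id}$ or $\tau$ via the norm $N_{K/k}(\mu)$, each case still giving a well-defined automorphism). Writing $\tau$ for the nontrivial element of $\mathrm{Gal}(M/K)$, a direct computation gives
\[
\tilde{\sigma}\tau(\sqrt{\lambda}) \;=\; -\mu\sqrt{\lambda} \;=\; \tau\tilde{\sigma}(\sqrt{\lambda}),
\]
so $\tilde{\sigma}$ and $\tau$ commute and $\mathrm{Gal}(M/k) = \langle\tilde{\sigma}, \tau\rangle$ is abelian. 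The mildly delicate point -- and the only one worth flagging -- is the elimination of the $\lambda^{\sigma} \in (K^{\times})^{2}$ alternative in the forward direction, which crucially uses that $[K:k] = 3$ is odd.
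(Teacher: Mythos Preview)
Your proof is correct. The paper itself gives no argument here---it simply refers the reader to Lemma~4.33 of \cite{Vallieres:2011}---so your self-contained treatment supplies what the paper omits, via the standard Kummer-theoretic description of $(M^{\times})^{2}\cap K^{\times}$ together with an explicit lift of $\sigma$.

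One small remark worth recording: in your forward direction you only use that $M/k$ is \emph{Galois}, while your reverse direction produces an \emph{abelian} Galois group from the square condition. Taken together, your argument therefore establishes the slightly sharper fact that, since $[K:k]=3$ is odd, the three conditions ``$M/k$ is Galois'', ``$M/k$ is abelian'', and ``$\lambda^{\sigma-1}\in (K^{\times})^{2}$'' are all equivalent. The elimination of the alternative $\lambda^{\sigma}\in (K^{\times})^{2}$ (via $\lambda = (\lambda^{\sigma})^{\sigma^{2}}$) is indeed where the odd degree enters, as you flag.
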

\begin{proof}
See Lemma $4.33$ of \cite{Vallieres:2011} for details if needed.
\end{proof}
\end{Step}
\begin{Step}
Given an element $\alpha \in \mathbb{Z}[G]$, one can check that $\alpha \in {\rm Ann}_{\mathbb{Z}[G]}(Cl(K))$ as follows.  Pick generators $[\mathfrak{a}_{1}],\ldots,[\mathfrak{a}_{h}]$ for $Cl(K)$ and check that $\mathfrak{a}_{i}^{\alpha}$ is a principal ideal for all $i=1,\ldots,h$.  A similar procedure also allows one to check that $\alpha \in {\rm Ann}_{\mathbb{Z}[G]}(Cl_{S}(K))$.  This allows us to check the annihilation statement of Conjecture \ref{burnsquestion}.
\end{Step}

\subsection{Computational results} \label{data}

Let $\mathcal{F}$ denote the collection of all totally real number fields $K$ such that $K/k$ is a ramified abelian extension and $k$ is a real quadratic field.  Our aim is to run the algorithm on all fields $K\in\mathcal{F}$ with $\Delta_{K}\leq 10^{12}$.
By the standard formula for discriminants in towers we know that
\[
  \Delta_{K}=\Delta_{k}^3\cdot \mathbb{N}(\Delta_{K/k})
  \,,
\]
and by the conductor-discriminant formula
(see, for example, Corollary~2 of~\cite{Artin/Tate:2009}) we know that $\Delta_{K/k}=\mathfrak{f}^2$ where $\mathfrak{f}$ is the conductor of $K/k$.  Consequently, we have:
\[
  \Delta_{K}\leq X
  \;\Longleftrightarrow\;
  \begin{cases}
  \Delta_{k}\leq X^{1/3}\\
  \mathbb{N}(\mathfrak{f})\leq \sqrt{\frac{X}{\Delta_{k}^3}}
  \end{cases}
\]
Hence to enumerate all fields in $\mathcal{F}$ up to discriminant $10^{12}$ it suffices to consider only real quadratic fields
$k$ with $\Delta_{k}\leq 10^4/\sqrt[3]{4} \approx 6300$, since $\mathbb{N}(\mathfrak{f}) \ge 2$.

For each such real quadratic field $k$, we iterate through all ideals $\mathfrak{a}$ of $k$ with
$1<\mathbb{N}(\mathfrak{a})\leq 10^6\cdot \Delta_{k}^{-3/2}$.
For each such~$\mathfrak{a}$, we
locate all the cubic subfields $K$ of the ray class field $k_\mathfrak{a}$
that satisfy $\mathfrak{f}(K/k)=\mathfrak{a}$, where $\mathfrak{f}(K/k)$ is the conductor of $K/k$, if any exist.

In turns out that there are $581$ real quadratic fields $k$ for which there is at least one ramified abelian cubic extension $K$ with $\Delta_{K}\le 10^{12}$.  The largest square-free integer $d$ for which $\mathbb{Q}(\sqrt{d})$ has such a cubic extension is $d = 3853$.

For each such extension $K/k$, we perform the algorithm presented in \S \ref{algorithm} for a total of $19197$ examples.  These calculations took $58.7$ (one-core) CPU hours on an Intel Xeon Haswell 3.20 GHz processor with eight cores.

\subsubsection{Popescu's conjecture}
There are three different cases that arise for our cubic extensions $K/k$:
\begin{enumerate}
\item $K/\mathbb{Q}$ is abelian, \label{ununun}
\item $K/\mathbb{Q}$ is Galois, but not abelian, \label{deuxdeuxdeux}
\item $K/\mathbb{Q}$ is not Galois. \label{troistroistrois}
\end{enumerate}
We list the number fields encountered in each case according to their class number in Table \ref{table_one} below.
As explained before, case (\ref{ununun}) is known by previous results of Burns, but we have performed the calculations for the sake of completeness.
We now explain one example in detail.  Our algorithm completes the calculation for this particular extension of number fields in $3.7$ seconds.

Take $k = \mathbb{Q}(\sqrt{3})$.  The rational prime $3$ is ramified in $k$ whereas $5$ is inert in $k$.  Let $\mathfrak{p}$ be the unique prime ideal of $k$ lying above $3$ and $\mathfrak{q}$ be the unique prime ideal lying above $5$.  Let $\mathfrak{m} = \mathfrak{p} \mathfrak{q}$ and consider the ray class field $k_{\mathfrak{m}}$.  One has $[k_{\mathfrak{m}}:k] = 12$ and there is a unique subfield $K$ that is a cubic abelian extension of $k$.  It has class number $1$.  The field $K$ is Galois over $\mathbb{Q}$, but its Galois group is not abelian.  A defining polynomial for $K$ is given by
$$p(x) = x^{6} - 24x^{4} - 50x^{3} - 3x^{2} + 30x - 2. $$
Both $\mathfrak{p}$ and $\mathfrak{q}$ are ramified in $K$, so $|S| = 4$ and $|S_{K}| = 8$.  We now go through the steps presented in \S \ref{algorithm}.

\noindent\textbf{Step 1.}
A fundamental system of $S_{K}$-units is given by the following polynomials modulo $(p(x))$:
\begin{enumerate}
\item
$\frac{7}{68} \, x^{5} - \frac{5}{68} \, x^{4} - \frac{145}{68} \, x^{3} - \frac{295}{68} \, x^{2} - \frac{40}{17} \, x + \frac{133}{34}$
\item
$\frac{39}{68} \, x^{5} - \frac{57}{68} \, x^{4} - \frac{837}{68} \, x^{3} - \frac{779}{68} \, x^{2} + \frac{207}{17} \, x - \frac{7}{34}$
\item
$\frac{11}{34} \, x^{5} - \frac{3}{34} \, x^{4} - \frac{257}{34} \, x^{3} - \frac{483}{34} \, x^{2} + \frac{3}{17} \, x + \frac{124}{17}$
\item
$\frac{13}{34} \, x^{5} - \frac{19}{34} \, x^{4} - \frac{279}{34} \, x^{3} - \frac{237}{34} \, x^{2} + \frac{138}{17} \, x - \frac{8}{17}$
\item
$\frac{7}{17} \, x^{5} - \frac{22}{17} \, x^{4} - \frac{111}{17} \, x^{3} + \frac{28}{17} \, x^{2} + \frac{112}{17} \, x - \frac{57}{17}$
\item
$-\frac{15}{68} \, x^{5} + \frac{1}{68} \, x^{4} + \frac{369}{68} \, x^{3} + \frac{671}{68} \, x^{2} - \frac{9}{17} \, x - \frac{81}{34}$
\item
$\frac{13}{68} \, x^{5} - \frac{19}{68} \, x^{4} - \frac{279}{68} \, x^{3} - \frac{305}{68} \, x^{2} + \frac{35}{17} \, x + \frac{43}{34}$
\end{enumerate}

\noindent\textbf{Step 2.}
The eight places in $S_{K}$ are
$$\{w_{1},w_{1}',w_{1}'',w_{2},w_{2}',w_{2}'',w_{3},w_{4}\} = \{-2.873, 0.620, 5.716, -2.233, -1.297, 0.067, \mathfrak{P}_{3},\mathfrak{P}_{5}\},$$
where $\mathfrak{P}_{3}$ is the unique finite prime lying above $\mathfrak{p}_{3}$ (similarly for $\mathfrak{P}_{5}$), and the floating-point numbers $\xi$ correspond to the real embeddings $x \mapsto \xi$.

\noindent\textbf{Step 3.}
The matrix $A$ is given by
\begin{equation*}
A=
{\scriptsize
\begin{pmatrix}
-1.316958 & -1.316958 & -1.316958 & 1.316958 & 1.316958 & 1.316958 & 0.0000000 & 0.0000000 \\
1.979440 & -2.556268 & 0.5768282 & -1.979440 & 2.556268 & -0.5768282 & 0.0000000 & 0.0000000 \\
-0.5768282 & -1.979440 & 2.556268 & -2.556268 & 0.5768282 & 1.979440 & 0.0000000 & 0.0000000 \\
-1.065300 & -2.054417 & 3.119716 & 1.065300 & 2.054417 & -3.119716 & 0.0000000 & 0.0000000 \\
3.119716 & -1.065300 & -2.054417 & 2.054417 & -3.119716 & 1.065300 & 0.0000000 & 0.0000000 \\
1.365299 & 0.8634475 & -1.679440 & -1.679440 & 1.365299 & 0.8634475 & -1.098612 & 0.0000000 \\
-0.1781805 & -1.655769 & 3.443387 & 0.8871190 & 0.3986478 & 0.3236711 & 0.0000000 & -3.218876
\end{pmatrix},
}
\end{equation*}
and we found the $S_{K}$-units $\beta_{i}$, ($i=1,2,3,4$) whose coordinates on the system of fundamental $S_{K}$-units are given by 
\begin{enumerate}
\item
$\beta_1=[-3, 2, -1, 1, 3, 3, 1]$,
\item
$\beta_2=[2, -3, -1, 3, 2, 2, 1]$,
\item
$\beta_3=[0, 9, -5, -9, -4, -13, 1]$,
\item
$\beta_4=[0, -3, 4, 4, 2, 2, -4]$.
\end{enumerate}

These four $S_{K}$-units lead to the following Artin system of $S_{K}$-units, given as polynomials modulo $(p(x))$:
\begin{enumerate}
\item $\varepsilon_{w_{1}} = -\frac{745941483}{68}x^{5} + \frac{2143283961}{68}x^{4} + \frac{11744383129}{68}x^{3} + \frac{3552405747}{68}x^{2} - \frac{1992290385}{17}x + \frac{259615015}{34} $
\item $\varepsilon_{w_{1}'} = -\frac{614733423}{34}x^5 - \frac{381601623}{34}x^4 + \frac{14516719313}{34}x^3 + \frac{39748062825}{34}x^2 + \frac{13259094279}{17}x - \frac{990292384}{17} $
\item $\varepsilon_{w_{1}''} = \frac{4808859}{68}x^{5} + \frac{27490335}{68}x^{4} + \frac{41738695}{68}x^{3} - \frac{1839447}{68}x^{2} - \frac{6235494}{17}x + \frac{841213}{34} $
\item $\varepsilon_{w_{2}} = -\frac{4649005}{17}x^{5} + \frac{10385828}{17}x^{4} + \frac{88374291}{17}x^{3} + \frac{35023017}{17}x^{2} - \frac{64294053}{17}x + \frac{4162067}{17}$
\item $\varepsilon_{w_{2}'}= \frac{21452083}{68}x^{5} - \frac{27839861}{68}x^{4} - \frac{478720265}{68}x^{3} - \frac{451335551}{68}x^{2} + \frac{130343317}{17}x - \frac{16529959}{34}$
\item $\varepsilon_{w_{2}''} = -\frac{20133813}{68}x^{5} - \frac{1362201}{68}x^{4} + \frac{483119351}{68}x^{3} + \frac{1039377233}{68}x^{2} + \frac{32680736}{17}x - \frac{297585015}{34}$
\item $\varepsilon_{w_{3}} = \frac{35}{148716}x^{5} - \frac{25}{148716}x^{4} - \frac{725}{148716}x^{3} - \frac{1475}{148716}x^{2} - \frac{200}{37179}x + \frac{325}{74358} $
\item $\varepsilon_{w_{4}} = \frac{3}{625}$
\end{enumerate}
The kernel of the induced $\mathbb{Z}[G]$-module morphism $f: Y_{S}(K) \longrightarrow E_{S}(K)$ is given by $\mathbb{Z} \cdot \alpha$, where
$$\alpha = 100 \cdot T_{v_{1}}(K) + 150 \cdot T_{v_{2}}(K) + 58 \cdot T_{v_{3}}(K) + 77 \cdot T_{v_{4}}(K). $$

\noindent\textbf{Step 4.}  We obtained
$$\beta_{S}(\mathcal{A}) \cdot e_{2,S}' = 0.030868\cdot {\rm id}-0.013639\cdot \sigma_{1} -0.017229\cdot \sigma_{2},$$
where the Galois automorphisms are given by
\begin{enumerate}
\item ${\rm id}: x \mapsto x$,
\item $\sigma_{1}: x \mapsto -\frac{9}{68}x^{5} + \frac{21}{68}x^{4} + \frac{167}{68}x^{3} + \frac{83}{68}x^{2} - \frac{53}{17}x - \frac{69}{34}$,
\item $\sigma_{2}: x \mapsto  -\frac{5}{68}x^{5} - \frac{11}{68}x^{4} + \frac{123}{68}x^{3} + \frac{507}{68}x^{2} + \frac{116}{17}x - \frac{61}{34}$.
\end{enumerate}

\noindent\textbf{Step 5.}  After recognizing the rational numbers, we obtained

$$\beta_{S}(\mathcal{A}) \cdot e_{2,S}' = \frac{43}{1393} \cdot {\rm id}-\frac{19}{1393}\cdot \sigma_{1}-\frac{24}{1393}\cdot \sigma_{2} \in \mathbb{Z}[G]. $$

\noindent\textbf{Step 6.}  Thus $d = 1393 = 7 \cdot 199$.

\noindent\textbf{Step 7.}  On the other hand, we obtained 
$$m = [E_{S}(K):\mu(K) \cdot f(X_{S}(K))] = 3698415 = 3^{2} \cdot 5 \cdot 7 \cdot 59 \cdot 199.$$
Note that $d \, | \, 2 m$.  (In fact, $d \, | \, m$ here, but there are cases where $d \nmid m$, $d \, | \, 2m$.)

\noindent\textbf{Step 8.}  The morphism $\widehat{\widetilde{\eta_{i}}^{*}} \in {\rm Hom}_{\mathbb{Z}[G]}(E_{S}(K),\mathbb{Z}[G]) \simeq M_{3,7}(\mathbb{Z})$ are given by

\begin{enumerate}
\item
$\widehat{\widetilde{\eta_1}^*}=[[1, 0, 0, 0, 0, 0, 0], [1, 0, 0, 0, 0, 0, 0], [1, 0, 0, 0, 0, 0, 0]]$,
\item
$\widehat{\widetilde{\eta_2}^*}=[[0, 1, 0, 0, 0, 0, 0], [0, -1, -1, 0, 0, -1, 0], [0, 0, 1, 0, 0, -1, 1]]$,
\item
$\widehat{\widetilde{\eta_3}^*}=[[0, 0, 1, 0, 0, 0, 0], [0, 1, 0, 0, 0, 1, -1], [0, -1, -1, 0, 0, 0, -1]]$,
\item
$\widehat{\widetilde{\eta_4}^*}=[[0, 0, 0, 1, 0, 0, 0], [0, 0, 0, -1, 1, 1, -1], [0, 0, 0, 0, -1, 1, -1]]$,
\item
$\widehat{\widetilde{\eta_5}^*}=[[0, 0, 0, 0, 1, 0, 0], [0, 0, 0, -1, 0, 1, -1], [0, 0, 0, 1, -1, 0, 0]]$,
\item
$\widehat{\widetilde{\eta_6}^*}=[[0, 0, 0, 0, 0, 1, 0], [0, 0, 0, 0, 0, 1, 0], [0, 0, 0, 0, 0, 1, 0]]$,
\item
$\widehat{\widetilde{\eta_7}^*}=[[0, 0, 0, 0, 0, 0, 1], [0, 0, 0, 0, 0, 0, 1], [0, 0, 0, 0, 0, 0, 1]]$.
\end{enumerate}

\noindent\textbf{Step 9.}  The coordinates of the $\gamma_{i}$ on the fundamental $S_{K}$-units are given by
\begin{enumerate}
\item
$\widetilde{\gamma_1}=[0, 0, 0, 0, 0, 0, 0]$,
\item
$\widetilde{\gamma_2}=[0, 0, 0, 2786, 2786, 0, 0]$,
\item
$\widetilde{\gamma_3}=[0, 0, 0, -2786, 0, 0, 0]$,
\item
$\widetilde{\gamma_4}=[0, -2786, 2786, 0, 0, 0, 0]$,
\item
$\widetilde{\gamma_5}=[0, -2786, 0, 0, 0, 0, 0]$,
\item
$\widetilde{\gamma_6}=[0, 0, 0, 0, 0, 0, 0]$,
\item
$\widetilde{\gamma_7}=[0, 0, 0, 0, 0, 0, 0]$.
\end{enumerate}
They are all divisible by $d = 1393$, as expected.

\noindent\textbf{Step 10.}  The coordinates of the $\delta_{i}$ on the fundamental $S_{K}$-units are 
\begin{enumerate}
\item $\delta_{1} = [0, 0, 0, 0, 0, 0, 0]$,
\item $\delta_{2} = [0, 0, 0, 2, 2, 0, 0]$,
\item $\delta_{3} = [0, 0, 0, -2, 0, 0, 0]$,
\item $\delta_{4} = [0, -2, 2, 0, 0, 0, 0]$,
\item $\delta_{5} = [0, -2, 0, 0, 0, 0, 0]$,
\item $\delta_{6} = [0, 0, 0, 0, 0, 0, 0]$,
\item $\delta_{7} = [0, 0, 0, 0, 0, 0, 0]$.
\end{enumerate}

\noindent\textbf{Step 11.}
The abelian condition is obviously satisfied in this case.  (We note that we did find examples where the units $\delta_{i}$ are not necessarily squares modulo roots of unity.)

\noindent\textbf{Step 12.}
Burns's conjecture is trivially true in this case since $h_K=1$.

\subsubsection{Burns's conjecture}
Recall that $d$ is the smallest positive integer satisfying
$d \cdot \beta_{S}(\mathcal{A}) \cdot e_{2,S}' \in \mathbb{Z}[G]$.
Then, as pointed out before, we always have numerically that $d \, | \, 2m$, whereas Burns's conjecture predicts only $d \, | \, 2m^{2}$, but we do not know of any theoretical reason that explains this phenomenon.  We shall distinguish four different statements:
\begin{enumerate}
\item $d \cdot \beta_{S}(\mathcal{A}) \cdot e_{2,S}' \in {\rm Ann}_{\mathbb{Z}[G]}(Cl(K))$, \label{un}
\item $2 \cdot m \cdot \beta_{S}(\mathcal{A}) \cdot e_{2,S}' \in {\rm Ann}_{\mathbb{Z}[G]}(Cl(K))$,  \label{deux}
\item $2 \cdot m^{2} \cdot \beta_{S}(\mathcal{A}) \cdot e_{2,S}' \in {\rm Ann}_{\mathbb{Z}[G]}(Cl(K))$, \label{trois}
\item $2 \cdot m^{2} \cdot \beta_{S}(\mathcal{A}) \cdot e_{2,S}' \in {\rm Ann}_{\mathbb{Z}[G]}(Cl_{S}(K))$.  \label{quatre}
\end{enumerate}
Under the assumption $d \, | \, 2m$,
note that (\ref{un}) implies (\ref{deux}) implies (\ref{trois}) implies (\ref{quatre}).  We list the number of them for each type of field $K$ (Galois abelian, Galois non-abelian and not Galois over $\mathbb{Q}$) in Tables \ref{table_three}, \ref{table_two} and \ref{table_four} below.  Part (\ref{un_11}) of Conjecture \ref{burnsquestion} is precisely the fourth statement.

Among our $19197$ examples, there are only $116$ examples where we have to go all the way to the fourth statement.  All of them satisfy $|S| = 3$ so there is only one finite ramified prime in those extensions.  Among these $116$ examples, there are only $2$ for which the $S_{K}$-class number is not $1$.  One of them is as follows.  

The base field is $k = \mathbb{Q}(\sqrt{42})$.  The rational prime $397$ splits completely in $k$.  Let $\mathfrak{p}$ be one of the two primes lying above $397$ and consider the ray class field $k_{\mathfrak{p}}$.  One has $[k_{\mathfrak{p}}:k] = 6$ and thus there is a unique subfield of degree $3$ over $k$ which we denote by $K$. A defining polynomial for $K$ is given by
$$p(x) = x^{6} - 2x^{5} - 61x^{4} + 84x^{3} + 708x^{2} - 640x - 1664 $$
and $K$ is not Galois over $\mathbb{Q}$.  The prime $\mathfrak{p}$ ramifies in $K/k$ and we let $\mathfrak{P}$ be the unique prime of $K$ lying above $\mathfrak{p}$.  Using PARI, we have $Cl(K) \simeq \mathbb{Z}/14\mathbb{Z}$.  We calculated an Artin system of $S_{K}$-units (which we do not list here), for which we have
$$d = 54782 = 2 \cdot 7^{2} \cdot 13 \cdot 43 $$ 
and
$$m = 191737 = 7^{3} \cdot 13 \cdot 43. $$
Note that in this case $d \nmid m$, but $d \, | \, 2m$.  Moreover, we have
$$2 \cdot m^{2} \cdot \beta_{S}(\mathcal{A}) \cdot e_{2,S}' = -1088490949\cdot {\rm id}+ 2645395389\cdot \sigma_{1}-1960894299\cdot \sigma_{2} \in \mathbb{Z}[G]. $$
Using PARI, we found an ideal $\mathfrak{a}$ such that $[\mathfrak{a}]$ generates $Cl(K)$.  If we let
$$\alpha = 2 \cdot m^{2} \cdot \beta_{S}(\mathcal{A}) \cdot e_{2,S}' \in \mathbb{Z}[G], $$
then $\mathfrak{a}^{\alpha}$ is not principal, but $\mathfrak{P}\cdot \mathfrak{a}^{\alpha}$ is.  So we do have 
$$\alpha \in {\rm Ann}_{\mathbb{Z}[G]}(Cl_{S}(K)) $$
as predicted by Burns's conjecture.

Finally, for those $116$ examples for which we have to go all the way to the fourth statement, we checked (\ref{burns_two}) of Conjecture \ref{burnsquestion} as follows: we let $S' = S_{\infty}$ and we pick
$$b \in \bigcup_{v \in S \smallsetminus S'}{\rm Ann}_{\mathbb{Z}[G]}\left(\mathbb{Z}[G/G_{v}] \right) $$
to be $b = \sigma -1$, where $\sigma$ is a non-trivial element of $G$.  In every single case, we verified that
$$(\sigma - 1) \cdot 2 \cdot m^{2} \cdot \beta_{S}(\mathcal{A}) \cdot e_{2,S}' \in {\rm Ann}_{\mathbb{Z}[G]}(Cl(K)). $$

As a final remark, in all our examples, not only does $d\,|\,2m$, but also
$$2\cdot m\cdot\beta_S(\mathcal{A})\cdot e'_{2,S}\in{\rm Ann}_{\mathbb{Z}[G]}(Cl_{S}(K)).$$
It might be of interest to investigate this further.

\section{Tables}\label{tables}

\begin{table}[h]
\begin{center} 
\caption{Summary of data.} 
\begin{tabular}{|c|c|c|c|c|c|c|c|c|c|c|c|c|c|c|c|c|c|c|c|}
\hline
Types $\setminus$ $h_{K}$ & $1$  &  $2$ & $3$ & $4$ & $5$ & $6$ & $7$ & $8$ & $9$  & $11$ & $12$ & $13$ & $14$ & $18$ & Total\\ 
\hline
\hline
Galois abelian & $478$  &  $245$ & $169$ & $91$ & $5$ & $37$ & $8$ & $8$ & $11$  & $0$ & $10$ & $1$ & $2$ &  $2$ & $1067$ \\
\hline
Galois  non-abelian & $218$  &  $81$ & $73$ & $17$ & $3$ & $21$ & $0$ & $3$ & $18$ & $0$ & $7$ & $0$ & $0$ &  $1$ & $442$ \\
\hline
Non Galois & $12340$  &  $1178$ & $3470$ & $268$ & $10$ & $196$ & $42$ & $6$ & $146$  & $4$ & $24$ & $0$ & $2$ &  $2$ & $17688$ \\
\hline
\hline
Total & $13036$  &  $1504$ & $3712$ & $376$ & $18$ & $254$ & $50$ & $17$ & $175$  & $4$ & $41$ & $1$ & $4$ &  $5$ & $19197$ \\
\hline
\end{tabular}   \label{table_one}
\end{center}
\end{table}

\begin{table}[h] 
\begin{center}
\caption{Annihilation statements for the Galois abelian case.} \
\begin{tabular}{|c|c|c|c|c|c|c|c|c|c|c|c|c|c|c|c|c|c|c|c|}
\hline
Statements $\setminus$ $h_{K}$ & $1$  &  $2$ & $3$ & $4$ & $5$ & $6$ & $7$ & $8$ & $9$  & $11$ & $12$ & $13$ & $14$ & $18$ & Total\\ 
\hline
\hline
Statement $1$ & $478$  &  $64$ & $165$ & $48$ & $4$ & $35$ & $5$ & $0$ & $9$ & $0$ & $10$ & $1$ & $0$ &  $1$ & $820$ \\
\hline
Statement $2$ & $0$  &  $122$ & $4$ & $39$ & $1$ & $1$ & $3$ & $7$ & $2$  & $0$ & $0$ & $0$ & $2$ &  $1$ & $182$ \\
\hline
Statement $3$ & $0$  &  $57$ & $0$ & $4$ & $0$ & $1$ & $0$ & $1$ & $0$  & $0$ & $0$ & $0$ & $0$ &  $0$ & $63$ \\
\hline
Statement $4$ & $0$  &  $2$ & $0$ & $0$ & $0$ & $0$ & $0$ & $0$ & $0$  & $0$ & $0$ & $0$ & $0$ &  $0$ & $2$ \\
\hline
\hline
Total & $478$  &  $245$ & $169$ & $91$ & $5$ & $37$ & $8$ & $8$ & $11$  & $0$ & $10$ & $1$ & $2$ &  $2$ & $1067$ \\
\hline
\end{tabular}  \label{table_three}
\end{center}
\end{table}

\begin{table}[h] 
\begin{center}
\caption{Annihilation statements for the Galois non-abelian case.} \
\begin{tabular}{|c|c|c|c|c|c|c|c|c|c|c|c|c|c|c|c|c|c|c|c|}
\hline
Statements $\setminus$ $h_{K}$ & $1$  &  $2$ & $3$ & $4$ & $5$ & $6$ & $7$ & $8$ & $9$  & $11$ & $12$ & $13$ & $14$ & $18$ & Total\\ 
\hline
\hline
Statement $1$ & $218$  &  $16$ & $73$ & $9$ & $2$ & $21$ & $0$ & $0$ & $17$ & $0$ & $4$ & $0$ & $0$ &  $1$ & $361$ \\
\hline
Statement $2$ & $0$  &  $54$ & $0$ & $8$ & $1$ & $0$ & $0$ & $3$ & $1$  & $0$ & $3$ & $0$ & $0$ &  $0$ & $70$ \\
\hline
Statement $3$ & $0$  &  $7$ & $0$ & $0$ & $0$ & $0$ & $0$ & $0$ & $0$  & $0$ & $0$ & $0$ & $0$ &  $0$ & $7$ \\
\hline
Statement $4$ & $0$  &  $4$ & $0$ & $0$ & $0$ & $0$ & $0$ & $0$ & $0$  & $0$ & $0$ & $0$ & $0$ &  $0$ & $4$ \\
\hline
\hline
Total & $218$  &  $81$ & $73$ & $17$ & $3$ & $21$ & $0$ & $3$ & $18$ & $0$ & $7$ & $0$ & $0$ &  $1$ & $442$ \\
\hline
\end{tabular}  \label{table_two}
\end{center}
\end{table}

\begin{table}[h] 
\begin{center}
\caption{Annihilation statements for the non Galois case.} \
\begin{tabular}{|c|c|c|c|c|c|c|c|c|c|c|c|c|c|c|c|c|c|c|c|}
\hline
Statements $\setminus$ $h_{K}$ & $1$  &  $2$ & $3$ & $4$ & $5$ & $6$ & $7$ & $8$ & $9$  & $11$ & $12$ & $13$ & $14$ & $18$ & Total\\ 
\hline
\hline
Statement $1$ & $12340$  &  $530$ & $3453$ & $179$ & $2$ & $186$ & $37$ & $0$ & $135$ & $0$ & $17$ & $0$ & $0$ &  $0$ & $16879$ \\
\hline
Statement $2$ & $0 $ & $320$  &  $9$ & $59$ & $0$ & $0$ & $1$ & $2$ & $11$   & $0$ & $7$ & $0$ & $0$ &  $2$ & $411$ \\
\hline
Statement $3$ & $0$  &  $254$ & $0$ & $18$ & $4$ & $6$ & $2$ & $4$ & $0$  & $0$ & $0$ & $0$ & $0$ &  $0$ & $288$ \\
\hline
Statement $4$ & $0$  &  $74$ & $8$ & $12$ & $4$ & $4$ & $2$ & $0$ & $0$  & $4$ & $0$ & $0$ & $2$ &  $0$ & $110$ \\
\hline
\hline
Total & $12340$  &  $1178$ & $3470$ & $268$ & $10$ & $196$ & $42$ & $6$ & $146$  & $4$ & $24$ & $0$ & $2$ &  $2$ & $17688$ \\
\hline
\end{tabular} \label{table_four}
\end{center}
\end{table}

\hfill

\bibliographystyle{plain}
\bibliography{main}

\begin{thebibliography}{10}

\bibitem{Artin:1932}
Emil Artin.
\newblock \"{U}ber {E}inheiten relativ galoisscher {Z}ahlk\"orper.
\newblock {\em J. Reine Angew. Math.}, 167:153--156, 1932.

\bibitem{Artin/Tate:2009}
Emil Artin and John Tate.
\newblock {\em Class field theory}.
\newblock AMS Chelsea Publishing, Providence, RI, 2009.
\newblock Reprinted with corrections from the 1967 original.

\bibitem{Burns:2007}
David Burns.
\newblock Congruences between derivatives of abelian {$L$}-functions at
  {$s=0$}.
\newblock {\em Invent. Math.}, 169(3):451--499, 2007.

\bibitem{Burns:2011}
David Burns.
\newblock On derivatives of {A}rtin {$L$}-series.
\newblock {\em Invent. Math.}, 186(2):291--371, 2011.

\bibitem{Burns/Kurihara/Sano:2016}
David Burns, Masato Kurihara, and Takamichi Sano.
\newblock On zeta elements for {$\Bbb G_m$}.
\newblock {\em Doc. Math.}, 21:555--626, 2016.

\bibitem{Burns/Boomla:2017}
David Burns and Alice Livingstone~Boomla.
\newblock On {S}elmer groups and refined {S}tark conjectures.
\newblock {\em Preprint}, 2017.

\bibitem{Grant:1999}
David Grant.
\newblock Units from {$5$}-torsion on the {J}acobian of {$y^2=x^5+1/4$} and the
  conjectures of {S}tark and {R}ubin.
\newblock {\em J. Number Theory}, 77(2):227--251, 1999.

\bibitem{Lang:1994}
Serge Lang.
\newblock {\em Algebraic number theory}, volume 110 of {\em Graduate Texts in
  Mathematics}.
\newblock Springer-Verlag, New York, second edition, 1994.

\bibitem{Popescu:2002}
Cristian~D. Popescu.
\newblock Base change for {S}tark-type conjectures ``over {$\mathbb{Z}$}''.
\newblock {\em J. Reine Angew. Math.}, 542:85--111, 2002.

\bibitem{Rubin:1996}
Karl Rubin.
\newblock A {S}tark conjecture ``over {$\mathbb{Z}$}'' for abelian
  {$L$}-functions with multiple zeros.
\newblock {\em Ann. Inst. Fourier (Grenoble)}, 46(1):33--62, 1996.

\bibitem{Sands:2001}
Jonathan~W. Sands.
\newblock Stark's question and {P}opescu's conjecture for abelian
  {$L$}-functions.
\newblock In {\em Number theory ({T}urku, 1999)}, pages 305--315. de Gruyter,
  Berlin, 2001.

\bibitem{Stark:1975}
Harold~M. Stark.
\newblock {$L$}-functions at {$s=1$}. {II}. {A}rtin {$L$}-functions with
  rational characters.
\newblock {\em Advances in Math.}, 17(1):60--92, 1975.

\bibitem{Stark:1980}
Harold~M. Stark.
\newblock {$L$}-functions at {$s=1$}. {IV}. {F}irst derivatives at {$s=0$}.
\newblock {\em Adv. in Math.}, 35(3):197--235, 1980.

\bibitem{Stucky:2017}
Pascal Stucky.
\newblock Experimental verification of the {R}ubin-{S}tark conjecture.
\newblock Master's thesis, University of Munich, Germany, 2017.

\bibitem{Tate:1984}
John Tate.
\newblock {\em Les conjectures de {S}tark sur les fonctions {$L$} d'{A}rtin en
  {$s=0$}}, volume~47 of {\em Progress in Mathematics}.
\newblock Birkh\"auser Boston Inc., Boston, MA, 1984.
\newblock Lecture notes edited by Dominique Bernardi and Norbert Schappacher.

\bibitem{PARI}
{The PARI~Group}, Bordeaux.
\newblock {\em {PARI/GP, version {\tt 2.9.1}}}, 2016.
\newblock {A}vailable from http://pari.math.u-bordeaux.fr/.

\bibitem{Vallieres:2016}
Daniel Valli\a`{e}res.
\newblock The equivariant {T}amagawa number conjecture and the extended abelian
  stark conjecture.
\newblock {\em To appear in J. Reine Angew. Math.}

\bibitem{Vallieres:2011}
Daniel Valli\`{e}res.
\newblock {\em On a generalization of the rank one {R}ubin-{S}tark conjecture}.
\newblock ProQuest LLC, Ann Arbor, MI, 2011.
\newblock Thesis (Ph.D.)--University of California, San Diego.

\end{thebibliography}

\end{document}